\documentclass{article}
\usepackage{graphicx} 
\usepackage{xcolor}
\usepackage{palatino}
\usepackage{amsthm}
\usepackage{mathrsfs}
\usepackage{amssymb}
\usepackage[
backend=biber,
    sorting=nyt,       
    style=alphabetic,
    maxcitenames=99,   
    maxalphanames=99,
    mincitenames=99,
    maxbibnames=99,    
    minbibnames=99
]{biblatex}
\usepackage[hidelinks]{hyperref}
\usepackage{amsmath}
\usepackage{mathtools}
\usepackage{amsfonts}
\usepackage{tikz-cd}  
\usepackage{textcomp}
\usepackage{enumerate}
\newtheorem{theorem}{Theorem}
\newtheorem{proposition}{Proposition}
\newtheorem{lemma}{Lemma}
\newtheorem{remark}{Remark}
\newtheorem{definition}{Definition}

\newtheorem*{theoremA}{Theorem A}
\newtheorem*{theoremB}{Theorem B}
\newcommand{\hilb}{\mathcal{H}}

\newcommand{\B}{\mathcal{B}}
\newcommand{\C}{\mathbb{C}}
\newcommand{\R}{\mathbb{R}}

\newcommand{\A}{\mathcal{A}}
\newcommand{\Z}{\mathbb{Z}}

\title{\huge{Discrete $C^*$-Frobenius Algebras and Infinite-Index Extensions in Algebraic Quantum Field Theories}}
\author{{\sc ZIYUN XU}\\{\small Graduate School of Mathematical Sciences}\\{\small The University of Tokyo, Komaba, Tokyo, 153-8914, Japan}\\{\small e-mail: {\tt zyxu0805@g.ecc.u-tokyo.ac.jp}}}

\begin{document}
\maketitle
\begin{abstract}

We introduce discrete $C^*$-Frobenius algebras as categorical data for constructing infinite-index extensions of M\"{o}bius covariant nets. The underlying representation is a countable direct sum of simple dualized modules, while the direct sum itself is not necessarily dualizable. Accordingly, rather than requiring a single bounded multiplication morphism, we work with compatible families of bounded left and right multiplication morphisms.
Using Bin Gui's theory of categorical extensions, we associate to every discrete
$C^*$-Frobenius algebra two (generally non-local) M\"{o}bius covariant extensions generated by left and right charged fields, respectively. If the discrete $C^*$-Frobenius algebra is commutative, the two extensions coincide and yield a local M\"{o}bius covariant extension. 
As applications,  for every $C^*$-tensor category generated by an invertible object with trivial self-braiding such that its fusion product rule is given by  $\Z$, 
we construct two classes of examples: the simple current extension by $\Z$ and the discrete Longo-Rehren construction.

\end{abstract}

\section{Introduction}

The theory of operator algebras plays an important role in the mathematical study of two-dimensional rational
conformal field theories. For a local M\"{o}bius covariant net $\A$, extensions with finite index are described by 
$Q$-systems \cite{longo1994duality} or special $C^*$-Frobenius algebras \cite{fuchs2003category} in $\operatorname{Rep}(\A)$. In the Connes fusion approach, 
this relation has been developed in Gui's work \cite{gui2021bisognano,gui2025connes}. It gives a precise procedure to construct the extension 
from the categorical data describing its charged sectors. 

The finite index assumption is restrictive when one moves to non-rational conformal field theories, for example, the non-compact free boson theory. Infinite
index extensions arise naturally and their operator algebraic study requires structures beyond 
ordinary $C^*$-Frobenius algebras.
Let $\hilb \cong \bigoplus_{i\in\Z}\hilb_i$ 
be a countably infinite direct sum of dualized modules of $\A$. $\hilb$ is generally non-dualizable. Therefore,
the ordinary $C^*$-Frobenius algebra formalism does not apply directly. Moreover, 
the formal multiplication map obtained by summing over infinitely many modules is not necessarily bounded.

Infinite-index extensions have previously been studied through generalized $Q$-systems of intertwiners by Del Vecchio and Giorgetti \cite{del2018infinite}, using the Doplicher-Haag-Roberts (DHR) theory \cite{fredenhagen1989superselection,fredenhagen1992superselection}. The infinite-depth version of the Longo-Rehren construction \cite{longo1995nets} at 
the level of subfactors has been studied by Masuda in \cite{masuda2000generalization}. 
The perspective of the 
present paper is complementary: we formulate infinite-index extensions in the framework of Connes fusion,
although the underlying strategy closely parallels that of Del Vecchio and Giorgetti \cite{del2018infinite}.

Our main idea is to replace the global multiplication morphism by sectorwise bounded left and right multiplication operators. More precisely, we introduce a discrete $C^*$-Frobenius algebra whose 
underlying module is $\hilb \cong \bigoplus_{i\in\Lambda}\hilb_i$,  a countably infinite direct sum of dualized modules of $\A$ and 
whose multiplication is encoded by bounded maps $\left\{ \mu_i^{\mathfrak{L}}: \hilb_i\boxtimes\hilb \to \hilb \right\}_{i\in\Lambda}$ and $\left\{ \mu_i^{\mathfrak{R}} : \hilb\boxtimes \hilb_i \to \hilb \right\}_{i\in\Lambda}$, where $\Lambda$ is a set with countably many elements.
The left and right families satisfy the unit condition, associativity and Frobenius-type compatibility conditions. 
This formulation avoids requiring a single bounded multiplication 
$\mu: \hilb\boxtimes\hilb \to \hilb$, while retaining the relations needed to construct local algebras of the extensions.

By using Gui's categorical extensions \cite{gui2021categorical}, every discrete \(C^*\)-Frobenius algebra gives rise to two, in general distinct, M\"{o}bius covariant extensions. When the discrete 
$C^*$-Frobenius algebra is commutative, which is defined in terms of braidings in $\operatorname{Rep}(\A)$, we show the two extensions actually coincide and the common family defines a 
local M\"{o}bius covariant extension. Our first main result can be summarized as follows.

\begin{theoremA}[Theorem \ref{relative local extensions} and Theorem \ref{localextension}]
Let
\[
    \left(
    \mathcal H \cong \bigoplus_{i\in\Lambda}\mathcal H_i,
    \{\mu_i^\mathfrak{L}\}_{i\in\Lambda},
    \{\mu_i^\mathfrak{R}\}_{i\in\Lambda},
    \{T_i\}_{i\in\Lambda}
    \right)
\]
be a discrete \(C^*\)-Frobenius algebra in
\(\operatorname{Rep}(\mathcal A)\).
Then the left and right charged fields define two M\"{o}bius covariant
extensions \(\mathcal B^\mathfrak{L}\) and \(\mathcal B^\mathfrak{R}\) of \(\mathcal A\).
If the discrete \(C^*\)-Frobenius algebra is commutative, then
the two extensions coincide, and the common family of von Neumann algebras defines a local M\"{o}bius covariant extension of \(\mathcal A\).
\end{theoremA}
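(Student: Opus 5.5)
The plan follows Gui's construction of finite-index extensions from $C^*$-Frobenius algebras via categorical extensions, carried out sectorwise so that only the bounded maps $\mu_i^{\mathfrak L}$ and $\mu_i^{\mathfrak R}$ are ever used. Fix the categorical extension $\mathscr{E}=(\A,\operatorname{Rep}(\A),\boxtimes,\hilb)$ of Gui, which supplies, for each arg-valued interval $\widetilde I$ and each $\xi\in\hilb_i(I)$, the left and right creation operators $L(\xi,\widetilde I)$ and $R(\xi,\widetilde I)$. These are bounded maps from any $\mathcal K$ into $\hilb_i\boxtimes\mathcal K$ or $\mathcal K\boxtimes\hilb_i$, and they satisfy locality, the braiding relations, functoriality and M\"obius covariance. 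For $\xi\in\hilb_i(I)$ define the left charged field $A^{\mathfrak L}(\xi,\widetilde I):=\mu_i^{\mathfrak L}L(\xi,\widetilde I)|_{\hilb}$ acting on $\hilb$, and analogously $A^{\mathfrak R}(\xi,\widetilde I):=\mu_i^{\mathfrak R}R(\xi,\widetilde I)|_{\hilb}$. Boundedness of each $\mu_i$ makes these bounded operators on $\hilb$. Then set $\B^{\mathfrak L}(I)$ to be the von Neumann algebra generated by all $A^{\mathfrak L}(\xi,\widetilde I)$ with $i\in\Lambda$ and $\xi\in\hilb_i(I)$, and define $\B^{\mathfrak R}(I)$ similarly.

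The first step is to check that these operators form a $*$-closed family containing $\A$, so that the generated algebras are genuine extensions.
\begin{itemize}
\item Taking $\hilb_0=\hilb_{\A}$ and using the unit condition for $\mu_0$ shows that $A^{\mathfrak L}(x\Omega,\widetilde I)=\pi(x)$ for $x\in\A(I)$. Hence $\A(I)$, acting on $\hilb$, lies in $\B^{\mathfrak L}(I)$.
\item The Frobenius-type compatibility, combined with the adjoint formula $L(\xi,\widetilde I)^*$ via the dual object $\overline{\hilb_i}$ and the maps $T_i$, should give $A^{\mathfrak L}(\xi,\widetilde I)^*=A^{\mathfrak L}(\bar\xi,\widetilde I)$. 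Here $\bar\xi\in\hilb_{\bar i}(I)$ is obtained from the conjugation associated with $T_i$.
\item Associativity of the families $\mu^{\mathfrak L}$ (the left version reading $\mu_i^{\mathfrak L}(1\boxtimes\mu_j^{\mathfrak L})=\sum_k\mu_k^{\mathfrak L}(\phi\boxtimes 1)$ sectorwise, for the appropriate maps $\phi:\hilb_i\boxtimes\hilb_j\to\hilb_k$), together with fusion of creation operators $L(\xi,\widetilde I)L(\eta,\widetilde I)=L(L(\xi,\widetilde I)\eta,\widetilde I)$, shows that products of charged fields remain in the closed span of charged fields. I would verify this on a dense set of vectors.
\end{itemize}

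The second step establishes the extension axioms. Isotony is immediate from the definition. Covariance follows from covariance of $L$ and $R$ in the categorical extension together with the fact that the $\mu_i$ are morphisms in $\operatorname{Rep}(\A)$, so that $U(g)A^{\mathfrak L}(\xi,\widetilde I)U(g)^*=A^{\mathfrak L}(U_i(g)\xi,g\widetilde I)$. The vacuum $\Omega\in\hilb_0\subset\hilb$ is cyclic for $\B^{\mathfrak L}(I)$, because $A^{\mathfrak L}(\xi,\widetilde I)\Omega=\xi$ by the unit property, and the vectors $\hilb_i(I)$ are dense in $\hilb_i$ for each $i$. Positivity of energy and uniqueness of the vacuum follow from the corresponding properties of each $\hilb_i$ as an $\A$-module, provided $\hilb_0$ occurs with multiplicity one in $\hilb$.

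Third, relative locality comes out as follows. The left fields commute with the right fields on disjoint intervals, since in the categorical extension $L(\xi,\widetilde I)$ and $R(\eta,\widetilde J)$ commute when $I$ and $J$ are disjoint. Moving $\mu^{\mathfrak L}$ past $\mu^{\mathfrak R}$ uses the mixed Frobenius and associativity compatibility $\mu_i^{\mathfrak L}(1\boxtimes\mu_j^{\mathfrak R})=\mu_j^{\mathfrak R}(\mu_i^{\mathfrak L}\boxtimes 1)$. This gives $\B^{\mathfrak R}(J)\subset\B^{\mathfrak L}(I)'$ and thus completes the proof of Theorem \ref{relative local extensions}.

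For Theorem \ref{localextension}, assume the algebra is commutative, meaning $\mu_i^{\mathfrak L}=\mu_i^{\mathfrak R}\circ\beta$, with $\beta$ the braiding $\hilb_i\boxtimes\hilb\to\hilb\boxtimes\hilb_i$. In the categorical extension one has $\beta L(\xi,\widetilde I)=R(\xi,\widetilde I)$, so $A^{\mathfrak L}(\xi,\widetilde I)=A^{\mathfrak R}(\xi,\widetilde I)$, and therefore $\B^{\mathfrak L}=\B^{\mathfrak R}$. Locality of the common net then follows directly from the relative locality of the third step.

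The main obstacle is the third step together with the $*$-closure in the first step, because infinite sums over $\Lambda$ appear when moving $\mu$'s past one another. I would first verify every identity on vectors of the form $L(\eta,\widetilde J)\Omega$ with $\eta\in\hilb_j(J)$. On such vectors each sum over $\Lambda$ reduces to a finite sum, since $\hilb_i\boxtimes\hilb_j$ involves only finitely many sectors. I would then extend to all of $\hilb$ using boundedness of the sectorwise operators and density of these vectors.
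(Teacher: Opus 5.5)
Your outline is the paper's argument. It uses charged fields $\mu_i^{\mathfrak{L}}L(\xi,\widetilde I)|_{\hilb}$ and $\mu_i^{\mathfrak{R}}R(\xi,\widetilde I)|_{\hilb}$, gets extension from the $\mu_0$ unit identity and cyclicity of $T_0^*\Omega$ from $\mu_i^{\mathfrak{L}}(\operatorname{id}_i\boxtimes T_0^*)=T_i^*$, and obtains relative locality by pasting Gui's locality, functoriality and the mixed square. Covariance holds because the $\mu$'s are intertwiners, and in the commutative case $\B^{\mathfrak{L}}=\B^{\mathfrak{R}}$ comes from the braiding axiom.

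As you have organized it, however, the argument leans on a step that is both unsupported and unnecessary. You pass from generators to von Neumann algebras through a $*$-closure formula $A^{\mathfrak{L}}(\xi,\widetilde I)^*=A^{\mathfrak{L}}(\bar\xi,\widetilde I)$. Nothing in Definition \ref{definitionCFro} provides a map $\xi\mapsto\bar\xi$ on $I$-bounded vectors with this property, and you yourself flag it as the main obstacle.

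The paper avoids this step entirely. Commutants are taken with respect to adjoint commutation, so $S''$ is a von Neumann algebra for any set $S$. Hence $\B^{\mathfrak{R}}(\widetilde J)\subset\B^{\mathfrak{L}}(\widetilde I)'$ follows once each $\mu_j^{\mathfrak{R}}R(\eta,\widetilde J)$ commutes adjointly with each $\mu_i^{\mathfrak{L}}L(\xi,\widetilde I)$, for $\widetilde I$ anticlockwise to $\widetilde J$. The orientation matters here; mere disjointness is not what Gui's locality axiom gives.

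This is Proposition \ref{commutant}. Gui's locality square already commutes adjointly. The associativity equation you wrote must be paired with its adjoint half, $(\mu_i^{\mathfrak{L}}\boxtimes\operatorname{id}_j)(\operatorname{id}_i\boxtimes(\mu_j^{\mathfrak{R}})^*)=(\mu_j^{\mathfrak{R}})^*\mu_i^{\mathfrak{L}}$, which is exactly the content of the Frobenius condition. No infinite sums over $\Lambda$ arise, since the $\mu$'s are given bounded maps and the mixed square is an axiom. Your product-closure step can also be dropped; it additionally uses a left--left associativity that is not an axiom.

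There are two smaller points.

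First, the identity $\mu_0^{\mathfrak{L}}(\operatorname{id}_0\boxtimes T_j^*)=T_j^*$, which you call the unit condition for $\mu_0$, is not an axiom. It is Lemma \ref{mu_0}, obtained by inserting $T_0^*$ in the middle slot of the mixed square. The same insertion more generally gives $\mu_i^{\mathfrak{L}}(\operatorname{id}_i\boxtimes T_j^*)=\mu_j^{\mathfrak{R}}(T_i^*\boxtimes\operatorname{id}_j)$.

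Second, that general identity is what makes your condition $\mu_i^{\mathfrak{L}}=\mu_i^{\mathfrak{R}}\mathbb{B}_{i,\hilb}$ equivalent to the paper's definition of commutativity. To see it, restrict to $\hilb_i\boxtimes\hilb_j$ by naturality of the braiding and exchange the labels $i,j$. You should state this. Your orientation is the convenient one: $\mu_i^{\mathfrak{L}}L(\xi,\widetilde I)=\mu_i^{\mathfrak{R}}\mathbb{B}_{i,\hilb}L(\xi,\widetilde I)=\mu_i^{\mathfrak{R}}R(\xi,\widetilde I)$ is literally the braiding axiom. By contrast, the proof of Theorem \ref{localextension} replaces $\mathbb{B}_{k,j}R(\eta_j,\widetilde I)$ by $L(\eta_j,\widetilde I)$. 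The braiding axiom alone gives that only with $\mathbb{B}_{j,k}^{-1}$ in place of $\mathbb{B}_{k,j}$. The step is justified, after composing with $\mu_j^{\mathfrak{L}}(\operatorname{id}_j\boxtimes T_k^*)$, by exactly this label exchange.
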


We give two families of examples. First, 
assume $\left\{ \hilb_i\right\}_{i\in\Z}$ is a family of mutually inequivalent $\A$-modules with quantum dimension one and that the self-braiding 
of the generating module $\hilb_1$ is trivial. We construct compatible families of unitary intertwiners and bounded left and right multiplication maps. These yield a commutative discrete $C^*$-Frobenius algebra and hence an infinite index local M\"{o}bius covariant extension, the simple current extension by $\Z$.
The simple current extensions by $\Z$ were studied in 
\cite[Section 3]{staszkiewicz1995lokale} using the DHR theory. 
Our second application is a discrete Longo-Rehren construction.
The two applications can be summarized as follows:

\begin{theoremB}[Theorem \ref{simplecurrentextension} and Theorem \ref{discreteLRconstruction}]
Let \(\{\mathcal H_n\}_{n\in\mathbb Z}\) be a family of mutually
inequivalent invertible simple objects of \(\operatorname{Rep}(\mathcal A)\)
with fusion rules
\[
    \mathcal H_n \boxtimes \mathcal H_m
    \cong
    \mathcal H_{n+m},
    \qquad n,m\in\mathbb Z,
\]
and assume that the self-braiding of \(\mathcal H_1\) is trivial.
Then the following hold:

\begin{enumerate}
    \item
    The $\A$-module
    \[
        \bigoplus_{n\in\mathbb Z}\mathcal H_n
    \]
    serves as the underlying Hilbert space of  a commutative discrete \(C^*\)-Frobenius algebra in $\operatorname{Rep}(\A)$, whose
    associated net is the local simple-current extension by
    \(\mathbb Z\).

    \item
    On the $\A\otimes \A^{\mathfrak{r}}$-module
    \[
        \mathcal H_\bullet
        =
        \bigoplus_{n\in\mathbb Z}
        \mathcal H_n \otimes \Theta_n \mathcal H_n ,
    \]
    we can construct a two-dimensional local M\"{o}bius covariant net $\B$ extending $\A\otimes \A^{\mathfrak{r}}$, which can be regarded as a
    discrete Longo-Rehren construction.
\end{enumerate}
\end{theoremB}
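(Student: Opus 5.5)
Theorem B is deduced from Theorem A: in each case I will exhibit a commutative discrete $C^*$-Frobenius algebra and identify the resulting local net.

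\textbf{Part 1.} I first fix unitary intertwiners $\Psi_{n,m}:\hilb_n\boxtimes\hilb_m\to\hilb_{n+m}$. Each $\hilb_n$ is invertible and simple, so every $\operatorname{Hom}(\hilb_n\boxtimes\hilb_m,\hilb_{n+m})$ is one-dimensional, and such unitaries exist.

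The associativity defect of this family is a $2$-cocycle $\omega\in Z^2(\Z,\mathbb T)$. Here $H^2(\Z,\mathbb T)=0$, so I can rescale the $\Psi_{n,m}$ to make the family strictly associative. Concretely, I would define $\Psi_{1,m}$ freely and then determine $\Psi_{n,m}$ inductively in $n$ through associativity.

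Commutativity asks that $\Psi_{m,n}\circ\beta_{n,m}=\Psi_{n,m}$, where $\beta$ is the braiding. The ratio of the two sides is a bicharacter of $\Z$, determined by its value on $(1,1)$. That value is the self-braiding of $\hilb_1$, which is trivial by hypothesis, so the identity holds after the normalisation above. This hexagon-type bookkeeping, especially checking that the normalisation can be chosen compatibly for negative $n$ and with the unit $\hilb_0=\hilb_\A$, is the step I expect to be the main obstacle.

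Next I set $\mu_n^{\mathfrak L}=\bigoplus_m\Psi_{n,m}:\hilb_n\boxtimes\bigoplus_m\hilb_m\to\bigoplus_m\hilb_{n+m}$. Each summand is unitary and the map reindexes the direct sum bijectively, so $\mu_n^{\mathfrak L}$ is a unitary, hence bounded. I define $\mu_n^{\mathfrak R}$ similarly, and take $T_n$ to be the inclusions of the summands $\hilb_n$.

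I then check the defining relations of a discrete $C^*$-Frobenius algebra summand by summand:
\begin{itemize}
\item the unit condition follows from $\Psi_{0,m}$ being the unitor;
\item associativity of the left and right families follows from strict associativity of the $\Psi$'s;
\item the Frobenius-type compatibility follows from unitarity, since the adjoint of $\Psi_{n,m}$ is expressed through $\Psi_{-n,n+m}$ and the standard duality of the invertible object $\hilb_n$ (with $\overline{\hilb_n}\cong\hilb_{-n}$).
\end{itemize}
Commutativity is the braided identity established above.

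Theorem A then yields a local Möbius covariant extension acting on $\bigoplus_n\hilb_n$. Its charged fields implement the fusion $\hilb_n\boxtimes-\to\hilb_{n+\cdot}$, which is precisely the simple current extension by $\Z$.

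\textbf{Part 2.} I would work in $\operatorname{Rep}(\A\otimes\A^{\mathfrak r})$, which contains $\operatorname{Rep}(\A)\boxtimes\operatorname{Rep}(\A)^{\mathrm{rev}}$ with $\Theta$ the conjugation to the reversed side. The objects $\hilb_n\otimes\Theta_n\hilb_n$ are again invertible, they are mutually inequivalent, and they satisfy the $\Z$ fusion rule.

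For the braiding, the self-braiding of $\hilb_1\otimes\Theta_1\hilb_1$ is the self-braiding of $\hilb_1$ times the reverse braiding. This product is trivial; it would be trivial even without the hypothesis on $\hilb_1$, by the Longo–Rehren cancellation.

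So Part 1 applies to the generating object $\hilb_1\otimes\Theta_1\hilb_1$ in place of $\hilb_1$ and gives a commutative discrete $C^*$-Frobenius algebra on $\hilb_\bullet$. Theorem A then gives the two-dimensional local extension $\B$. Alternatively, I could define the multiplications directly as $\Psi_{n,m}\otimes\overline{\Psi_{n,m}}$ and check the relations, which cancel the phase choices automatically.
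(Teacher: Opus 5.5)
Your Part~1 is essentially the paper's argument. It uses unitary intertwiners made strictly associative, the bicharacter argument reducing commutativity to the trivial self-braiding of $\hilb_1$, and the reindexing argument showing $\mu_i^{\mathfrak L},\mu_j^{\mathfrak R}$ are unitary. That unitarity also gives the Frobenius square for free, since commutativity with unitary vertical arrows implies adjoint commutativity. One correction: the associativity defect $\omega(i,k,j)$ of a family $\Psi_{n,m}$ is a $3$-cocycle, not a $2$-cocycle. The vanishing you need is therefore $H^3(\Z,\mathbb T)=0$, which is what the paper invokes; $H^2(\Z,\mathbb T)$ is not the obstruction group.

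Part~2 has a genuine gap at the final step, ``Theorem A then gives the two-dimensional local extension $\B$''. Theorem A concerns a M\"obius covariant net on $S^1$ with Gui's categorical extension. Its charged fields $L(\xi,\widetilde I)$, $R(\xi,\widetilde I)$ are indexed by a single arg-valued interval, with locality for $\widetilde I$ anticlockwise to $\widetilde J$. Its output is a net $\widetilde I\mapsto\B(\widetilde I)$ with chiral locality. For $\A\otimes\A^{\mathfrak r}$ on $S^1\times S^1$ no such categorical extension is available. One would need fields localized in $\widetilde I\times\widetilde J$ with independent intervals, obeying the two-dimensional relation ``anticlockwise in the first factor, clockwise in the second''. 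The diagonal restriction $I\mapsto\A(I)\otimes\A^{\mathfrak r}(I)$, even if Theorem A applied to it, would at best give a chiral net, not the algebras $\B(\widetilde I\times\widetilde J)$.

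Supplying this is exactly the content of the paper's Section~6:
\begin{enumerate}
\item The charged fields are taken to be $\mu_i^{\mathfrak L}\bigl(L(\xi,\widetilde I)\otimes\Theta L(\eta,\mathfrak r\widetilde J)\Theta^{-1}\bigr)$.
\item The PCT theorem rewrites $\Theta L(\eta,\mathfrak r\widetilde J)\Theta^{-1}=R(\Theta\eta,\widetilde J)$, so $L$ and $R$ (hence clockwise and anticlockwise) swap roles on the second factor.
\item Extension, cyclicity, relative locality and $\B^{\mathfrak L}=\B^{\mathfrak R}$ are then reproved in the two-variable setting (Propositions~\ref{extension}, \ref{cyclicvacuum}, \ref{locality1}, \ref{locality2}). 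The last uses that $\mathbb B_{j,i}\otimes\mathbb B_{\Theta_i\hilb_i,\Theta_j\hilb_j}$ carries $R\otimes L$ to $L\otimes R$.
\end{enumerate}
Your observation that $\hilb_1\otimes\Theta_1\hilb_1$ has trivial self-braiding in $\operatorname{Rep}(\A)\boxtimes\operatorname{Rep}(\A)^{\mathrm{rev}}$ supplies only the algebraic identity of Lemma~\ref{commuLR}. The localization and two-dimensional locality of the charged fields, which is where the work lies, is not obtained by invoking Theorem A.
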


One motivation for the latter construction comes from the boundary-bulk (or open-closed) conformal field theories \cite{cardy1986effect}. In the rational setting, the Longo-Rehren and full-center constructions (or bulk constructions) \cite{Fje+08,kong2008morita} relate the chiral or boundary theories to the bulk theories. An operator-algebraic formulation of this picture was developed in \cite{bischoff2015characterization,bischoff2015tensor}. In particular, the relation between discrete Longo-Rehren constructions and categorical full centers is well understood under finiteness assumptions.
For non-rational theories one expects infinite families of sectors, so the finiteness and dualizability assumptions underlying the standard constructions are no longer automatic. The present paper does not attempt to give general full center constructions for non-rational conformal field theories. Rather, it provides a concrete operator-algebraic framework in which an infinite-index analogue of the Longo-Rehren construction can be carried out.  We expect this framework to be useful in the future study of extensions and bulk constructions for many non-rational theories.

This paper is organized as follows.
In Section 2, we recall local M\"{o}bius covariant nets and categorical extensions. In Section 3, we introduce
discrete $C^*$-Frobenius algebras and construct M\"{o}bius covariant extensions. Section 4 treats the commutative case and proves locality.
Section 5 constructs the simple current extensions by $\Z$. 
Section 6 gives the discrete Longo--Rehren construction and the resulting two-dimensional local Möbius covariant net.

\tableofcontents

\section{Preliminaries}
\subsection{M\"{o}bius covariant nets}
Let $\mathcal{I}$ be the set of all non-empty non-dense open intervals in the unit circle $S^1$. If $I\in\mathcal{I}$, 
$I^c$ denotes the interior of the complement of $I$. 

A \textbf{(local) M\"{o}bius covariant net} $\A$  is a family of von Neumann algebras $\left\{ \A(I)\right\}$ indexed by $I \in \mathcal{I}$ on a fixed separable Hilbert space $\hilb_0$, satisfying the following axioms:
\begin{enumerate}[(a)]
    \item (Isotony) If $I \subset J$, then $\A(I) \subset \A(J). $
    \item (Locality) If $I \cap J = \emptyset,$ then $[\A(I), \A(J)] = \{0\}$, where the bracket denotes the commutator. 
    \item (M\"{o}bius covariance) There is a strongly continuous  unitary representation $U$ of $PSU(1,1)$ on $\hilb_0$, such that 
    \[
    U(g) \A(I) U(g)^* = \A(gI),
    \]
    for any $g\in PSU(1,1), I \in \mathcal{I}$. 
   \item (Positive energy) The generator of the rotation subgroup is positive.
   \item (Vacuum) There exists a $PSU(1,1)$-invariant unit vector $\Omega$, called the \textbf{vacuum vector}, such that 
   $\Omega$ is cyclic for $\bigvee_{I\in\mathcal{I}}\A(I)$ (the von Neumann algebra generated by all $\A(I)$).
\end{enumerate}

If $\A$ is a M\"{o}bius covariant net, it satisfies the following properties (see \cite{guido1996conformal} and the references therein):
\begin{enumerate}
    \item (Additivity) $\A(I) = \bigvee_i \A(I_i)$, where $\left\{ I_i\right\}$ is a set of open intervals such that 
    $\bigcup_i I_i = I$. 
    \item (Haag Duality) For any $I\in\mathcal{I}$, $A(I)' = \A(I^c)$.
    \item (Reeh-Schlieder theorem) For any $I\in\mathcal{I}$, $\A(I)\Omega$ is dense in $\hilb_0$. 
    \item For any $I \in \mathcal{I}$, $\A(I)$ is a type III factor. 
\end{enumerate}

An \textbf{$\A$-module} (or a representation of $\A$) consists of a separable Hilbert space 
$\hilb_i$ and a family of normal unital $*$-representations $\pi_{i,I}: \A(I) \to B(\hilb_i)$, for $I\in\mathcal{I}$, 
such that for any $I, J \in \mathcal{I}$ and $I\subset J$, 
$\pi_{i,J} \restriction_{\A(I)} = \pi_{i,I}$. We will write $\pi_{i,I}$ simply as $\pi_i$ when there is no ambiguity.
$\hilb_0$ is automatically an $\A$-module, called the \textbf{vacuum module}. An $\A$-module $\hilb_i$
is \textbf{ M\"{o}bius covariant} if  there is a strongly continuous unitary representation 
$U_i$ of $\widetilde{PSU}(1,1)$ on $\hilb_i$ such
that 
\[U_i(g)\pi_{i,I}(x)U_i(g)^* = \pi_{i,gI}(U(g)xU(g)^*), \] for any $g\in\widetilde{PSU}(1,1)$, $I\in\mathcal{I}$ and $x\in\A(I)$.

A local M\"{o}bius covariant net $\A$ is said to be \textbf{conformal covariant} if 
$U$ extends to a strongly continuous projective unitary representation of the orientation-preserving diffeomorphisms of  $S^1$, 
$\operatorname{Diff}^+(S^1)$ on $\hilb_0$, such that for any $g\in \operatorname{Diff}^+(S^1)$, $I\in \mathcal{I}$ and any
representing element $U_g \in U(\hilb_0)$,
\[
U_g \A(I) U_g^* = \A(gI). 
\]
Let $\mathcal{G} = \widetilde{\operatorname{Diff}^+}(S^1)$ be  the simply connected covering group of ${\operatorname{Diff}^+}(S^1)$. Define a topological group:
\[
\mathcal{G}_\A =  \left\{(g,V)\in \mathcal{G} \times U(\hilb_0): V \ \text{is a representing element of} \ U(g)\right\},
\]
called the central extension of $\mathcal{G}$ associated with $\A$. 
Then we have a representation $U: \mathcal{G}_\A \to B(\hilb_0)$ defined by $U(g,V) = V$.  Any $\A$-module $\hilb_i$ is \textbf{ conformal covariant } in the sense that there is a unique unitary representation 
$U_i$ of $\mathcal{G}_\A$ on $\hilb_i$ such that 
\[U_i(g)\pi_{i,I}(x)U_i(g)^* = \pi_{i,gI}(U(g)xU(g)^*), \] for any $g\in\mathcal{G}_\A$, $I\in\mathcal{I}$ and $x\in\A(I)$. See \cite{henriques2019loop,gui2021categorical} and references therein for more details.

\subsection{Categorical Extensions}
We recall the categorical extension introduced by \cite{gui2021categorical}. We refer the reader to \cite{gui2021categorical,gui2021bisognano} and references therein for more details.

Let $\operatorname{Rep}(\A)$ be the $C^*$-tensor category of $\A$-modules whose monoidal structure is given by Connes fusion \cite{bartels2015conformal,bartels2017conformal,gui2021categorical}, which is equivalent to using Doplicher-Haag-Roberts (DHR) superselection theory \cite{fredenhagen1989superselection,fredenhagen1992superselection} by \cite[Chapter 6]{gui2021categorical}. The unit object 
is $\hilb_0$. The tensor (fusion) product of $\hilb_i$ and $\hilb_j$ is written as $\hilb_i \boxtimes \hilb_j$.  
We assume without loss of generality that
$\operatorname{Rep}(\A)$ is strict, which means that we will not distinguish between
$\hilb_i, \hilb_0 \boxtimes \hilb_i, \hilb_i \boxtimes \hilb_0$ or $(\hilb_i\boxtimes\hilb_j)\boxtimes\hilb_k$  and $\hilb_i\boxtimes(\hilb_j\boxtimes\hilb_k)$ denoted as $\hilb_i\boxtimes\hilb_j\boxtimes\hilb_k$. 

If $I\in\mathcal{I}$, the arg-function of $I$ is a continuous function $\arg_I: I \to \R$ such that 
for any $e^{i\theta} \in S^1$, $\arg_I(e^{i\theta}) - \theta \in 2\pi\Z$. $\widetilde{I} = (I, \arg_I)$ is called an 
\textbf{arg-valued} interval. In other words, $\widetilde{I}$ is a branch of $I$ in the universal cover of $S^1$. 
Denote by $\widetilde{\mathcal{I}}$ the set of arg-valued intervals. 
$\widetilde{I}=(I, \arg_I)$ and $\widetilde{J}=(J,\arg_J)$ are said to be \textbf{disjoint} if $I $ and $J$ are disjoint. $\widetilde{I}$ is said to be \textbf{anticlockwise} to $\widetilde{J}$(or equivalently, $\widetilde{J}$ is \textbf{clockwise} to $\widetilde{I}$), if $I$ and $J$ are disjoint and $\arg_J(w)< \arg_I(z) <\arg_J(w)+2\pi$ for any 
$z\in I, w\in J$. For $\widetilde{I}\in\widetilde{\mathcal{I}}$, define $\widetilde{I}' = (I', \arg_{I'}) \in \widetilde{\mathcal{I}}$ such that $\widetilde{I}$ is anticlockwise to $\widetilde{I}'$. $\widetilde{I}'$ is called the \textbf{clockwise complement} of $\widetilde{I}$.

Let $\hilb_i, \hilb_j$ be $\A$-modules and $I\in\mathcal{I}$. Denote 
by $\operatorname{Hom}_{\A(I')}(\hilb_i,\hilb_j)$ the vector space of bounded linear operators
$X: \hilb_i \to \hilb_j$ such that 
$X\pi_{i,I'}(a) =  \pi_{j,I'}(a)X$ for any $a\in\A(I')$. 
We say a vector $\xi\in\hilb_i$ is \textbf{$I$-bounded} if there exists
$Z(\xi, I)\in\operatorname{Hom}_{\A(I^c)}(\hilb_0, \hilb_i)$ such that 
$Z(\xi, I)\Omega = \xi$. By the Reeh-Schlieder theorem, the operator is unique whenever it exists. Denote by $\hilb_i(I)$ the space of all $I$-bounded vectors in $\hilb_i.$

Let $\hilb_1, \hilb_2, \hilb_3, \hilb_4$ be Hilbert spaces and let
$A: \hilb_1 \to \hilb_2, \  T:\hilb_2 \to \hilb_4, \ S: \hilb_1 \to \hilb_3, \ B:\hilb_3 \to \hilb_4$ be bounded linear 
operators. 
We say the diagram
\[
\begin{tikzcd}
    \hilb_1 \arrow{r}{A} \arrow[swap]{d}{S}  &\hilb_2 \arrow{d}{T} \\
    \hilb_3  \arrow[swap]{r}{B} &\hilb_4
\end{tikzcd}
\]
\textbf{commutes adjointly} if both this diagram and the following diagram 
\[
\begin{tikzcd}
    \hilb_1 \arrow{r}{A}   &\hilb_2  \\
    \hilb_3  \arrow[swap]{r}{B}\arrow{u}{S^*} &\hilb_4 \arrow[swap]{u}{T^*}
\end{tikzcd}
\] commute. Note that the above diagram commutes if and only if the following diagram 
\[
\begin{tikzcd}
    \hilb_1  \arrow[swap]{d}{S}  &\hilb_2 \arrow{d}{T}\arrow[swap]{l}{A^*} \\
    \hilb_3   &\hilb_4 \arrow{l}{B^*}
\end{tikzcd}
\] commutes. Note also that if both $A$ and $B$ are unitary or both  $S$ and $T$ are unitary, commutativity implies adjoint commutativity. 

\begin{definition}[\cite{gui2021categorical}]
    The \textbf{categorical extension} $\mathfrak{E}=(\A, \operatorname{Rep}(\A), \boxtimes , \hilb)$ of $\A$ assigns bounded linear operators 
    \begin{align*}
        L(\xi, \widetilde{I}) &\in \operatorname{Hom}(\hilb_j, \hilb_i \boxtimes \hilb_j), \\
        R(\xi, \widetilde{I}) &\in \operatorname{Hom}(\hilb_j, \hilb_j\boxtimes  \hilb_i),
    \end{align*}
    to any $\hilb_i, \hilb_j \in \operatorname{Obj}(\operatorname{Rep}(\A))$, $\widetilde{I}\in \widetilde{\mathcal{I}}$ and $\xi \in \hilb_i(I)$, such that the following conditions are satisfied:
    \begin{enumerate}[A]
        \item (Isotony) If $\widetilde{I_1} \subset \widetilde{I_2}$ in $\widetilde{\mathcal{I}}$, then $L(\xi,\widetilde{I_1}) = L(\xi, \widetilde{I_2})$ and $R(\xi,\widetilde{I_1}) = R(\xi, \widetilde{I_2})$ acting on any object of $\operatorname{Rep}(\A)$. 
        \item (Functoriality) Let $\hilb_i, \hilb_j, \hilb_k \in \operatorname{Obj}(\operatorname{Rep}(\A)), \widetilde{I}\in \widetilde{\mathcal{I}}, F \in \operatorname{Hom}_{\A}(\hilb_j,\hilb_k), \xi\in\hilb_i(I), \eta\in\hilb_j$. The following relations hold:
        \[
        (\operatorname{id}_i\boxtimes F)L(\xi,\widetilde{I})\eta = L(\xi,\widetilde{I})F\eta, \ (F\boxtimes \operatorname{id}_i)R(\xi,\widetilde{I})\eta = R(\xi,\widetilde{I})F\eta.
        \]
        \item (State-field correspondence) For any $\hilb_i\in \operatorname{Obj}(\operatorname{Rep}(\A)), \widetilde{I}\in \widetilde{\mathcal{I}} $ and $\xi\in\hilb_i(I)$,
        we have:
        \[
        L(\xi,\widetilde{I})\Omega = \xi = R(\xi,\widetilde{I})\Omega,
        \]
        by the natural identifications $\hilb_i = \hilb_i\boxtimes\hilb_0 = \hilb_0\boxtimes \hilb_i.$
        \item (Density of fusion product) For any $\hilb_i, \hilb_j \in  \operatorname{Obj}(\operatorname{Rep}(\A)), \widetilde{I}\in \widetilde{\mathcal{I}} ,$
        the set $L(\hilb_i,\widetilde{I})\hilb_j$ spans a dense subspace of $\hilb_i\boxtimes\hilb_j$ and 
        $R(\hilb_i,\widetilde{I})\hilb_j$ spans a dense subspace of  $\hilb_j\boxtimes \hilb_i$.
        \item (Locality) For any $\hilb_k \in  \operatorname{Obj}(\operatorname{Rep}(\A)) $ , disjoint  $\widetilde{I},\widetilde{J}\in \widetilde{\mathcal{I}}$ such that 
        $\widetilde{I}$ is anticlockwise to $\widetilde{J}$ and any $\xi\in\hilb_i(I), \eta\in\hilb_j(J)$, the following diagram commutes adjointly:
        \[
        \begin{tikzcd}
            \hilb_k \arrow{r}{R(\eta,\widetilde{J})} \arrow[swap]{d}{L(\xi,\widetilde{I})}    &\hilb_k\boxtimes \hilb_j \arrow{d}{L(\xi,\widetilde{I})} \\
            \hilb_i\boxtimes\hilb_k     \arrow{r}{R(\eta,\widetilde{J})}      & \hilb_i\boxtimes\hilb_k\boxtimes\hilb_j
        \end{tikzcd}
        \]
        \item (Unitary Braiding) For any $\hilb_i, \hilb_j \in \operatorname{Obj}(\operatorname{Rep}(\A)) ,$ there is a unitary linear map $\mathbb{B}_{i,j}: \hilb_i\boxtimes\hilb_j \to \hilb_j\boxtimes\hilb_i$, such that:
        \[
        \mathbb{B}_{i,j}L(\xi,\widetilde{I})\eta = R(\xi,\widetilde{I})\eta, 
        \]
        whenever $\widetilde{I}\in\widetilde{\mathcal{I}}, \xi\in\hilb_i(I), \eta\in \hilb_j. $

    \end{enumerate}

\end{definition}

If the M\"{o}bius covariant net $\A$ is conformal covariant, then $\mathfrak{E}$ is \textbf{conformal covariant}, which means that  for each $\hilb_i \in \operatorname{Obj}(\operatorname{Rep}(\A)),$
$\widetilde{I} \in \widetilde{\mathcal{I}}$, $\xi\in \hilb_i(I)$ and $g\in \mathcal{G}_c$, 
there is  a  vector $g\xi g^{-1} \in \hilb_i(gI)$ such that, when acting on any $\A$-module, 
\[
L(g\xi g^{-1} , g\widetilde{I})= g L(\xi, \widetilde{I}) g^{-1}, \ R(g\xi g^{-1} , g\widetilde{I})= g R(\xi, \widetilde{I}) g^{-1}.
\]

When $g\in \widetilde{PSU}(1,1)$, we have 
\[
L(g\xi  , g\widetilde{I})= g L(\xi, \widetilde{I}) g^{-1}, \ R(g\xi  , g\widetilde{I})= g R(\xi, \widetilde{I}) g^{-1}.
\]
A categorical extension $\mathfrak{E}$ is called  \textbf{M\"{o}bius covariant} if the above relations are satisfied. 

A M\"{o}bius covariant representation $\hilb_j$ of $\A$ is called \textbf{dualized} if there exists a \textbf{dual object} $\hilb_{\overline{j}} \in \operatorname{Obj}(\operatorname{Rep}(\A))$ and \textbf{evaluations}
$\operatorname{ev}_{j,\overline{j}} \in \operatorname{Hom}_\A(\hilb_j\boxtimes\hilb_{\overline{j}}, \hilb_0)$ and 
$\operatorname{ev}_{\overline{j},j} \in \operatorname{Hom}_\A(\hilb_{\overline{j}}\boxtimes\hilb_j, \hilb_0)$ such that the following conjugate equations hold:
\begin{align*}
    (\operatorname{ev}_{j,\overline{j}}\otimes \operatorname{id}_j)(\operatorname{id}_j \otimes \operatorname{coev}_{\overline{j}, j})  & = \operatorname{id}_j = (\operatorname{id}_j\otimes\operatorname{ev}_{\overline{j},j})(\operatorname{coev}_{j,\overline{j}}\otimes \operatorname{id}_j)  \\
      (\operatorname{ev}_{\overline{j},j}\otimes \operatorname{id}_{\overline{j}})(\operatorname{id}_{\overline{j}} \otimes \operatorname{coev}_{j,\overline{j}})  & = \operatorname{id}_{\overline{j}} = (\operatorname{id}_{\overline{j}}\otimes\operatorname{ev}_{j,\overline{j}})(\operatorname{coev}_{\overline{j},j}\otimes \operatorname{id}_{\overline{j}})
\end{align*}
are satisfied, where
$\operatorname{coev}_{j,\overline{j}} = \operatorname{ev}_{j,\overline{j}}^*$ and 
$\operatorname{coev}_{\overline{j},j} = \operatorname{ev}_{\overline{j},j}^*$. Note that 
if $\hilb_j$ is dualized,  so is $\hilb_{\overline{j}}$. 
There exist positive numbers 
$d_j= d_{\overline{j}}$, called the \textbf{quantum dimensions} of $\hilb_j $ and $\hilb_{\overline{j}}$,
satisfying 
$\operatorname{ev}_{j,\overline{j}}\operatorname{coev}_{j,\overline{j}} = \operatorname{ev}_{\overline{j},j}\operatorname{coev}_{\overline{j},j} = d_j \operatorname{id}_0 = d_{\overline{j}}\operatorname{id}_0 $.

Denote by $\operatorname{Rep}^{\operatorname{d}}(\A)$
the rigid braided $C^*$-tensor category of 
dualized (M\"{o}bius covariant) representations of $\A$. 
See \cite[Section 3]{gui2021bisognano} for more details. 
Moreover, 
$\mathfrak{E}^{\operatorname{d}}  = \left(\A, \operatorname{Rep}^{\operatorname{d}}(\A), \boxtimes , \hilb  \right)$ is a M\"{o}bius covariant categorical extension \cite[Theorem 3.8]{gui2021bisognano}. 

If $\hilb_i, \hilb_j$ are dualized $\A$-modules and 
$F\in \operatorname{Hom}_\A(\hilb_i, \hilb_j)$, 
then there exists a unique $F^{\vee} \in \operatorname{Hom}_\A(\hilb_{\overline{j}}, \hilb_{\overline{i}})$, called the transpose of $F$, satisfying 
\[
\operatorname{ev}_{i,\overline{i}} (\operatorname{id}_i \otimes F^{\vee}) = \operatorname{ev}_{j,\overline{j}}(F \otimes \operatorname{id}_{\overline{j}}).  
\]
Note that  $F^{\vee\vee} = F$. 
Define $\overline{F} = (F^\vee)^* = (F^*)^\vee \in
\operatorname{Hom}_\A(\hilb_{\overline{i}}, \hilb_{\overline{j}})$, called the \textbf{conjugate} 
of $F$. If $G \in \operatorname{Hom}_\A(\hilb_j, \hilb_k)$,
then 
$\overline{GF} = \overline{G} \cdot \overline{F}$.
$F $ is a projection (resp. unitary, an isometry, a
partial isometry) if and only if $F$ is so.

\section{Discrete $C^*$-Frobenius algebras and infinite-index extensions}
In this section, we generalize the notion of  $C^*$-Frobenius algebra in $\operatorname{Rep}(\A)$ to the case where the underlying $\A$-module is a countably infinite direct sum of simple dualized $\A$-modules. As a main result, we show that 
every discrete $C^*$-Frobenius algebra gives rise to, in general, distinct M\"{o}bius covariant extensions of $\A$. 

Throughout this paper, $\Lambda$ is a monoid with at most countably many elements. Denote by $0$ the unit element of $\Lambda$.
For example, $(\Z,+)$ and $(\mathbb{Q},+)$.

\begin{definition}\label{definitionCFro}    
    A \textbf{ discrete $C^\ast$-Frobenius algebra} in Rep$(\A)$ is a tuple 
    \[
     \left(\hilb \cong \bigoplus_{i\in\Lambda}\hilb_i, \{\mu_i^{\mathfrak{L}}\}_{i\in\Lambda}, \{\mu_j^{\mathfrak{R}}\}_{j\in\Lambda }, \{T_k\}_{k\in\Lambda}\right),
    \]
    where each $ \hilb_i$ is a simple object in $\operatorname{Rep}^{\operatorname{d}}(\A), \mu_i^{\mathfrak{L}}\in\operatorname{Hom}_{\A}(\hilb_i\boxtimes \hilb, \hilb), \mu_j^{\mathfrak{R}} \in \operatorname{Hom}_{\A}(\hilb \boxtimes\hilb_j, \hilb), T_k\in\operatorname{Hom}_{\A}(\hilb, \hilb_k)$, for $i,j,k \in\Lambda$ ($\Lambda$ is called to be the \textbf{labeling set} of the discrete $C^*$-Frobenius algebra), such that the following conditions are satisfied: 
   \begin{enumerate}
       \item (\textbf{Unit}):  for any $i,j \in \Lambda$, $T_i T_j^* = \delta_{i,j}\operatorname{id}_i$, $T_i^*T_i$ is a projection and $\sum_{i\in\Lambda} T_i^*T_i = 1$ with respect to the strong operator topology. In addition, the following identities are satisfied:
       \begin{align*}
           \mu^{\mathfrak{L}}_i(\operatorname{id}_i \boxtimes T_0^* )&= T_i^* \\ 
           \mu_j^{\mathfrak{R}}(T_0^* \boxtimes \operatorname{id}_j) &= T^*_j.
       \end{align*}
       \item (\textbf{Associativity and Frobenius condition}): the following diagrams commute adjointly for any $i,j\in\Lambda$: 
       \[\begin{tikzcd}
\hilb_i\boxtimes \hilb \boxtimes \hilb_j \arrow{r}{\mu_i^{\mathfrak{L}}\boxtimes\operatorname{id}_j} \arrow[swap]{d}{\operatorname{id}_i\boxtimes\mu_j^{\mathfrak{R}}} & \hilb\boxtimes\hilb_j \arrow{d}{\mu_j^{\mathfrak{R}}} \\%
\hilb_i\boxtimes \hilb \arrow{r}{\mu_i^{\mathfrak{L}}}& \hilb.\\
\end{tikzcd}
\]
     \end{enumerate}
\end{definition}

The $\A$-module structure on $\hilb \cong \bigoplus_{i\in\Lambda}\hilb_i$ is given by:
$\pi_{\hilb, I}(a) = \sum_{i\in\Lambda} T_j^* \pi_{j,I}(x)T_j $, for any $I \in \mathcal{I}$ and any $a \in \A(I)$.

\begin{remark}
    The ordinary $C^*$-Frobenius algebra \cite[Definition 2.1]{gui2025connes} is a discrete $C^*$-Frobenius algebra whose labeling set $F$ is a finite set. Let 
    $\left(\hilb_a \cong \bigoplus_{i\in F}\hilb_i, \mu , \iota \right)$ be a $C^*$-Frobenius algebra in $\operatorname{Rep}(\A)$, where $F$ is a finite set. Choose
    $\{T_i\}_{i\in F}$ such that $\iota = T_0^*$, for any $i\in F$,  $T_iT_j^* = \delta_{i,j}\operatorname{id}_i$, $T^*_iT_i$ is a projection and $\sum_{i\in F}T^*_iT_i = 1$.
    
    For $i\in F$, define: 
    \begin{align*}
        \mu_i^\mathfrak{L} &= \mu \circ (T^*_i \boxtimes \operatorname{id}_a)\in \operatorname{Hom}_\A(\hilb_i \boxtimes \hilb_a, \hilb_a), \\
        \mu_i^\mathfrak{R} &= \mu \circ(\operatorname{id}_a \boxtimes T_i^*) \in \operatorname{Hom}_\A(\hilb_a \boxtimes \hilb_i, \hilb_a). 
    \end{align*}
    We will show that  $\left(\hilb_a \cong \bigoplus_{i\in F}\hilb_i, \{\mu^\mathfrak{L}_i\}_{i\in F}, \{\mu^\mathfrak{R}_i\}_{i\in F} , \{T_i\}_{i\in F}   \right)$ is a discrete $C^*$-Frobenius algebra. 
    For any $i\in F$, by the unit condition of the ordinary $C^*$-Frobenius algebra, we have the unit condition: 
\begin{align*}
    \mu_i^\mathfrak{L}(\operatorname{id}_i \boxtimes T_0^*) &= \mu\circ (\operatorname{id}_a \boxtimes T_0^*)(T^*_i \boxtimes \operatorname{id}_0) = T_i^* \\
   \mu_i^\mathfrak{R}(T_0^* \boxtimes \operatorname{id}_i) &= \mu \circ (T_0^*\boxtimes \operatorname{id}_a)(\operatorname{id}_0\boxtimes T_i^*) = T_i^*. 
\end{align*}
The associativity and Frobenius condition follows from the adjoint commutativity of the following diagram:
\[
\begin{tikzcd}
    \hilb_i\boxtimes \hilb_a \boxtimes \hilb_j \  \arrow{r}{T_i^*\boxtimes\operatorname{id}\boxtimes\operatorname{id}} \arrow[swap]{d}{\operatorname{id}\boxtimes \operatorname{id} \boxtimes T_j^*} &\hilb_a\boxtimes \hilb_a\boxtimes \hilb_j \arrow{r}{\mu\boxtimes\operatorname{id}} \arrow{d}{\operatorname{id} \boxtimes \operatorname{id} \boxtimes T_j^*}  & \hilb_a\boxtimes \hilb_j \arrow{d}{\operatorname{id}\boxtimes T_j^*}\\
    \hilb_i \boxtimes \hilb_a \boxtimes \hilb_a  \arrow{r}{T_i^*\boxtimes\operatorname{id} \boxtimes \operatorname{id}} \arrow{d}{\operatorname{id}\boxtimes\mu}    &\hilb_a \boxtimes \hilb_a \boxtimes \hilb_a \arrow{r}{\mu\boxtimes\operatorname{id}} \arrow{d}{\operatorname{id}\boxtimes\mu}     & \hilb_a \boxtimes \hilb_a \arrow{d}{\mu} \\
    \hilb_i \boxtimes \hilb_a  \arrow{r}{T_i^* \boxtimes \operatorname{id}}                   &\hilb_a \boxtimes \hilb_a  \arrow{r}{\mu}       & \hilb_a,
\end{tikzcd}
\]
where subscripts on identity morphisms are suppressed for simplicity.. The adjoint commutativity of the lower-right diagram is nothing but the 
associativity and the Frobenius condition of the ordinary $C^*$-Frobenius algebra. It is easy to see the remaining diagrams commute adjointly.

\end{remark}

\begin{lemma}\label{mu_0}
    For any $i\in \Lambda$, we have:
    \begin{align*}
        \mu_0^\mathfrak{L}(\operatorname{id}_0\boxtimes T_i^*)  = T_i^*  =  \mu_0^\mathfrak{R}(T_i^*\boxtimes \operatorname{id}_0). 
    \end{align*}
\end{lemma}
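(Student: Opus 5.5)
The plan is to derive both identities directly from the associativity diagram in Definition \ref{definitionCFro}, specialized so that one of the two labels is the unit $0\in\Lambda$, and then to collapse it with the unit conditions. Only the plain commutativity of the diagram is needed, not the adjoint version. Throughout we use strictness of $\operatorname{Rep}(\A)$, so that $\hilb_0\boxtimes X = X = X\boxtimes \hilb_0$ and $\operatorname{id}_0\boxtimes F = F = F\boxtimes \operatorname{id}_0$. We also use the interchange law $(F\boxtimes G)(F'\boxtimes G') = FF'\boxtimes GG'$.

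For the left identity, take the associativity diagram with labels $(0,i)$:
\[
\mu_i^{\mathfrak R}(\mu_0^{\mathfrak L}\boxtimes \operatorname{id}_i) = \mu_0^{\mathfrak L}(\operatorname{id}_0\boxtimes \mu_i^{\mathfrak R}) \quad\text{on } \hilb_0\boxtimes\hilb\boxtimes\hilb_i .
\]
Precompose both sides with $\operatorname{id}_0\boxtimes T_0^*\boxtimes \operatorname{id}_i$, viewed as a map $\hilb_0\boxtimes\hilb_0\boxtimes\hilb_i = \hilb_i \to \hilb_0\boxtimes\hilb\boxtimes\hilb_i$.

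On the left side, the interchange law gives $\mu_i^{\mathfrak R}\bigl(\mu_0^{\mathfrak L}(\operatorname{id}_0\boxtimes T_0^*)\boxtimes\operatorname{id}_i\bigr)$. The unit condition with $i=0$ gives $\mu_0^{\mathfrak L}(\operatorname{id}_0\boxtimes T_0^*) = T_0^*$. Hence the left side becomes $\mu_i^{\mathfrak R}(T_0^*\boxtimes\operatorname{id}_i)$, which equals $T_i^*$ by the right unit condition.

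On the right side, we get $\mu_0^{\mathfrak L}\bigl(\operatorname{id}_0\boxtimes \mu_i^{\mathfrak R}(T_0^*\boxtimes\operatorname{id}_i)\bigr)$. By the right unit condition this is $\mu_0^{\mathfrak L}(\operatorname{id}_0\boxtimes T_i^*)$. Comparing the two sides yields $\mu_0^{\mathfrak L}(\operatorname{id}_0\boxtimes T_i^*) = T_i^*$.

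For the right identity, argue symmetrically with the diagram for labels $(i,0)$:
\[
\mu_0^{\mathfrak R}(\mu_i^{\mathfrak L}\boxtimes\operatorname{id}_0) = \mu_i^{\mathfrak L}(\operatorname{id}_i\boxtimes\mu_0^{\mathfrak R}),
\]
precomposed with $\operatorname{id}_i\boxtimes T_0^*\boxtimes\operatorname{id}_0$.

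The left side becomes $\mu_0^{\mathfrak R}\bigl(\mu_i^{\mathfrak L}(\operatorname{id}_i\boxtimes T_0^*)\boxtimes\operatorname{id}_0\bigr)$. By the left unit condition this is $\mu_0^{\mathfrak R}(T_i^*\boxtimes\operatorname{id}_0)$.

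The right side becomes $\mu_i^{\mathfrak L}\bigl(\operatorname{id}_i\boxtimes\mu_0^{\mathfrak R}(T_0^*\boxtimes\operatorname{id}_0)\bigr)$. The right unit condition with $j=0$ reduces this to $\mu_i^{\mathfrak L}(\operatorname{id}_i\boxtimes T_0^*)$, which equals $T_i^*$ by the left unit condition. This gives $\mu_0^{\mathfrak R}(T_i^*\boxtimes\operatorname{id}_0) = T_i^*$.

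There is no real obstacle here. The only point requiring care is bookkeeping with strictness, namely that $\operatorname{id}_0\boxtimes T_0^*\boxtimes\operatorname{id}_i$ is a legitimate morphism $\hilb_i\to\hilb_0\boxtimes\hilb\boxtimes\hilb_i$. One must also check that the unit conditions are applied inside tensor products via the interchange law. This is valid because $\boxtimes$ is a bifunctor on bounded intertwiners in $\operatorname{Rep}(\A)$.
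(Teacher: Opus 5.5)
Your proof is correct and follows the paper's argument. Like the paper, you specialize the associativity diagram to the labels $(0,i)$, precompose with $\operatorname{id}_0\boxtimes T_0^*\boxtimes\operatorname{id}_i$, and collapse both sides using the unit conditions. You also write out the symmetric $(i,0)$ case, which the paper only dismisses as ``similarly''.
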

\begin{proof}
By associativity, we have the following commuting diagram: 
 \[\begin{tikzcd}
\hilb_0\boxtimes \hilb \boxtimes \hilb_i\arrow{r}{\mu_0^{\mathfrak{L}}\boxtimes\operatorname{id}_i} \arrow[swap]{d}{\operatorname{id}_0\boxtimes\mu_i^{\mathfrak{R}}} & \hilb\boxtimes\hilb_i \arrow{d}{\mu_i^{\mathfrak{R}}} \\%
\hilb_0\boxtimes \hilb \arrow{r}{\mu_0^{\mathfrak{L}}}& \hilb ,
\end{tikzcd}
\]
which implies 
\[
\mu_i^{\mathfrak{R}}\left( \mu_0^{\mathfrak{L}}(\operatorname{id}_0\boxtimes T_0^*)\boxtimes\operatorname{id}_i\right)  = 
\mu_0^{\mathfrak{L}}\left( \operatorname{id_0}\boxtimes \mu_i^{\mathfrak{R}}(T_0^*\boxtimes\operatorname{id}_i) \right).
\]
Then by the unit condition, we have $\mu_0^{\mathfrak{L}}(\operatorname{id}_0 \boxtimes T_i^*) = T_i^*$. The other equality,  $T_i^*  =  \mu_0^\mathfrak{R}(T_i^*\boxtimes \operatorname{id}_0)$ is proved similarly. 
\end{proof}

Now define two families of spaces of bounded operators on $\hilb$: for any $\widetilde{I} \in \widetilde{\mathcal{I}}$,
\begin{align}{\label{defining}}
    S^{\mathfrak{L}}(\widetilde{I}) &= \operatorname{span}_\C\left\{  \mu_i^{\mathfrak{L}}L(\xi_i,\widetilde{I})\restriction_{\hilb}:\ i\in\Lambda,\  \xi_i\in\hilb_i(I) \right\}, \\
    S^{\mathfrak{R}}(\widetilde{I}) &=  \operatorname{span}_\C\left\{  \mu_i^{\mathfrak{R}}R(\xi_i,\widetilde{I})\restriction_{\hilb}:\ i\in\Lambda,\  \xi_i\in\hilb_i(I) \right\}. 
\end{align}
Denote by $\B^{\mathfrak{L}}(\widetilde{I})$ the von Neumann algebra generated by $S^{\mathfrak{L}}(\widetilde{I})$ and denote by $\B^{\mathfrak{R}}(\widetilde{I})$ the von Neumann algebra generated by
$S^{\mathfrak{R}}(\widetilde{I}).$

\begin{proposition}\label{inclusion}
    $\pi_{\hilb,I}\left(\A(I)\right)\subset \B^{\mathfrak{L}}(\widetilde{I}) \cap \B^{\mathfrak{R}}(\widetilde{I})$, for $\widetilde{I}\in\widetilde{\mathcal{I}}$.
\end{proposition}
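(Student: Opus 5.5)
Take $i=0$ and use vacuum fields. For $x\in\A(I)$ the vector $\xi=x\Omega\in\hilb_0$ is $I$-bounded, with $Z(\xi,I)=x$: the operator $x$ lies in $\operatorname{Hom}_{\A(I^c)}(\hilb_0,\hilb_0)$ by locality and sends $\Omega$ to $\xi$. The key input from the theory of categorical extensions is that the vacuum charged field acts by the representation:
\[
L(x\Omega,\widetilde{I})=\pi_{j,I}(x)=R(x\Omega,\widetilde{I})
\]
on every $\A$-module $\hilb_j$, under the identifications $\hilb_0\boxtimes\hilb_j=\hilb_j=\hilb_j\boxtimes\hilb_0$. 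This is a standard property of Gui's categorical extension \cite{gui2021categorical}, and I will quote it.

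If one wants to justify it directly, the argument is as follows. Apply functoriality (axiom B) to the morphism $F=Z(\eta,J)$ for $\eta\in\hilb_j(J)$. One cannot use an $\A$-morphism here, so instead use locality (axiom E) together with state-field correspondence (axiom C) and density (axiom D). Choose $\widetilde{J}$ clockwise to $\widetilde{I}$ and compute
\[
L(x\Omega,\widetilde{I})R(\eta,\widetilde{J})\Omega=R(\eta,\widetilde{J})L(x\Omega,\widetilde{I})\Omega=R(\eta,\widetilde{J})x\Omega .
\]
On the other hand, $R(\eta,\widetilde{J})\Omega=\eta$. Using $R(\eta,\widetilde{J})=Z(\eta,J)$ on $\hilb_0$, which commutes with $\A(I)$, we get $R(\eta,\widetilde{J})x\Omega=\pi_j(x)\eta$. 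Since vectors $\eta\in\hilb_j(J)$ are dense, this gives $L(x\Omega,\widetilde{I})=\pi_j(x)$. The $R$ case is symmetric.

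Next compute the generator $\mu_0^{\mathfrak{L}}L(x\Omega,\widetilde{I})\restriction_\hilb$. By the identity above this is $\mu_0^{\mathfrak{L}}\pi_{\hilb_0\boxtimes\hilb}(x)$. Since $\mu_0^{\mathfrak{L}}\in\operatorname{Hom}_\A(\hilb_0\boxtimes\hilb,\hilb)$, the generator equals $\pi_{\hilb,I}(x)\mu_0^{\mathfrak{L}}$, where $\mu_0^{\mathfrak{L}}$ is viewed as an operator on $\hilb=\hilb_0\boxtimes\hilb$. By Lemma \ref{mu_0}, $\mu_0^{\mathfrak{L}}T_i^*=T_i^*$ for every $i$. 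Summing over $i$ with $\sum_i T_i^*T_i=1$ strongly, and using that $\mu_0^{\mathfrak{L}}$ is bounded, gives $\mu_0^{\mathfrak{L}}=\operatorname{id}_\hilb$. Hence $\pi_{\hilb,I}(x)\in S^{\mathfrak{L}}(\widetilde{I})\subset\B^{\mathfrak{L}}(\widetilde{I})$. The same argument with $\mu_0^{\mathfrak{R}}$, $R$, and the second identity of Lemma \ref{mu_0} gives $\pi_{\hilb,I}(x)\in\B^{\mathfrak{R}}(\widetilde{I})$.

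The main obstacle is the identification of the vacuum fields with $\pi_I(x)$, specifically the justification that $R(\eta,\widetilde{J})$ on $\hilb_0$ coincides with $Z(\eta,J)$. This holds in Gui's construction, and I would cite it rather than reprove it. The remaining steps are routine.
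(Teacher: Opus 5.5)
Your proposal is correct and follows essentially the paper's route: you take the generator $\mu_0^{\mathfrak{L}}L(x\Omega,\widetilde{I})$, use that the vacuum field acts by $\pi(x)$, and invoke Lemma \ref{mu_0}; the paper also relies on the vacuum-field fact, though only silently, when it passes to $T_j^*\pi_{j,I}(x)T_j\eta$. The only difference is cosmetic: you deduce $\mu_0^{\mathfrak{L}}=\operatorname{id}_{\hilb}$ globally from Lemma \ref{mu_0} and $\sum_i T_i^*T_i=1$, whereas the paper pushes $T_j^*$ through $L(x\Omega,\widetilde{I})$ by functoriality and computes on finitely supported vectors.
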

\begin{proof}
Fix $\widetilde{I} \in \widetilde{\mathcal{I}}$.  We only show  $\pi_{\hilb,I}\left(\A(I)\right)\subset \B^{\mathfrak{L}}(\widetilde{I})$. $\pi_{\hilb,I}\left(\A(I)\right) \subset \B^{\mathfrak{R}}(\widetilde{I})$ follows similarly. By the unit condition and Lemma \ref{mu_0},  $\pi_{\hilb,I}\left(\A(I)\right)\subset S^{\mathfrak{L}}(\widetilde{I})$. Indeed, for any $x\in\A(I)$ and any $\eta\in\bigoplus^{\operatorname{alg}}_{i\in\Lambda}\hilb_i(I)$, we have:
    \begin{align*}
        \mu_0^{\mathfrak{L}}L(x\Omega,\widetilde{I})\eta &= \sum_{j\in\operatorname{supp}\eta} \mu_0^{\mathfrak{L}} L(x\Omega,\widetilde{I})T_j^*T_j\eta \\
        &= \sum_{j\in\operatorname{supp}\eta}\mu_0^{\mathfrak{L}}(\operatorname{id}_0\boxtimes T_j^*)L(x\Omega,\widetilde{I})T_j\eta \\
        &= \sum_{j\in\operatorname{supp}\eta} T_j^* \pi_{j,I}(x)T_j\eta\\
        &= \sum_{j\in\operatorname{supp}\eta}T_j^*T_j\pi_{\hilb,I}(x)\eta\\
        & = \pi_{\hilb,I}(x)\eta, 
    \end{align*}
    where $\operatorname{supp}\eta = \left\{i\in\Lambda: T_i \eta \neq0 \right\}$  is a finite subset of $\Lambda$.
\end{proof}

\begin{proposition}\label{cyclicvacuum1}
    Assume $\xi\in\bigoplus^{\operatorname{alg}}_{i\in\Lambda}\hilb_i(I)$, the algebraic direct sum of $\left\{\hilb_i(I)\right\}_{i\in\Lambda}$ , for some $I \in \mathcal{I}$. Then,
    \begin{align*}
        \sum_{i\in\operatorname{supp}\xi}\mu_i^{\mathfrak{L}}L(\xi_i,\widetilde{I}) \cdot T_0^*\Omega = \xi =    \sum_{i\in\operatorname{supp}\xi}\mu_i^{\mathfrak{R}}R(\xi_i,\widetilde{I}) \cdot T_0^*\Omega,
    \end{align*}
    where $\operatorname{supp}\xi = \left\{i\in\Lambda: T_i\xi \neq0 \right\}$. 
\end{proposition}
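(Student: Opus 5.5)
Here $\xi_i$ is understood as $T_i\xi\in\hilb_i(I)$, so that $\xi=\sum_{i\in\operatorname{supp}\xi}T_i^*\xi_i$. This decomposition holds because $\sum_i T_i^*T_i=1$ strongly and $\operatorname{supp}\xi$ is finite. By linearity of the finite sums, it suffices to prove for each fixed $i\in\Lambda$ and $\xi_i\in\hilb_i(I)$ that
\[
\mu_i^{\mathfrak{L}}L(\xi_i,\widetilde{I})T_0^*\Omega = T_i^*\xi_i = \mu_i^{\mathfrak{R}}R(\xi_i,\widetilde{I})T_0^*\Omega .
\]
Summing over $i\in\operatorname{supp}\xi$ then yields the two claimed identities.

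For the left identity, the plan is to move $T_0^*$ past the charged field using functoriality (axiom B). Note that $T_0^*\in\operatorname{Hom}_\A(\hilb_0,\hilb)$ and $\Omega\in\hilb_0$. Functoriality with $F=T_0^*$ gives
\[
L(\xi_i,\widetilde{I})T_0^*\Omega=(\operatorname{id}_i\boxtimes T_0^*)L(\xi_i,\widetilde{I})\Omega .
\]
By the state-field correspondence (axiom C), $L(\xi_i,\widetilde{I})\Omega=\xi_i$ under $\hilb_i\boxtimes\hilb_0=\hilb_i$. Applying $\mu_i^{\mathfrak{L}}$ and using the unit condition $\mu_i^{\mathfrak{L}}(\operatorname{id}_i\boxtimes T_0^*)=T_i^*$, we obtain $\mu_i^{\mathfrak{L}}L(\xi_i,\widetilde{I})T_0^*\Omega=T_i^*\xi_i$.

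The right identity follows the same steps. Functoriality for $R$ gives $R(\xi_i,\widetilde{I})T_0^*\Omega=(T_0^*\boxtimes\operatorname{id}_i)R(\xi_i,\widetilde{I})\Omega$. State-field correspondence gives $R(\xi_i,\widetilde{I})\Omega=\xi_i$ in $\hilb_0\boxtimes\hilb_i=\hilb_i$. The unit condition $\mu_i^{\mathfrak{R}}(T_0^*\boxtimes\operatorname{id}_i)=T_i^*$ then finishes the argument.

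The only point needing care is bookkeeping rather than a genuine obstacle. The fields $L(\xi_i,\widetilde{I})$ and $R(\xi_i,\widetilde{I})$ are defined on objects of $\operatorname{Rep}(\A)$, and functoriality requires both $\hilb_0$ and $\hilb$ to be such objects. Both are, since $\hilb$ is an $\A$-module via $\pi_{\hilb}$. We must also use the strictness identifications consistently. I expect no step to be harder than this routine verification.
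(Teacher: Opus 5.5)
Your proposal is correct and follows the paper's proof essentially verbatim. You move $T_0^*$ past $L(\xi_i,\widetilde{I})$ (resp.\ $R(\xi_i,\widetilde{I})$) by functoriality, apply the state-field correspondence and the unit condition $\mu_i^{\mathfrak{L}}(\operatorname{id}_i\boxtimes T_0^*)=T_i^*$ (resp.\ $\mu_i^{\mathfrak{R}}(T_0^*\boxtimes\operatorname{id}_i)=T_i^*$), and sum $T_i^*\xi_i$ over the finite support to recover $\xi$.
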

\begin{proof}
    \begin{align*}
        \sum_{i\in\operatorname{supp}\xi}\mu_i^{\mathfrak{L}}L(\xi_i,\widetilde{I}) \cdot T_0^*\Omega &= \sum_{i\in\operatorname{supp}\xi}\mu_i^{\mathfrak{L}}(\operatorname{id}_i \boxtimes T_0^*)L(\xi_i,\widetilde{I})\Omega \\
        &= \sum_{i\in\operatorname{supp}\xi}T_i^*L(\xi_i,\widetilde{I})\Omega  \\
        & = \sum_{i\in\operatorname{supp}\xi} T^*_i\xi_i\\
        & = \xi. 
    \end{align*}
    The other identity follows similarly. 
\end{proof}

If $S$ is a set of bounded linear operators on a Hilbert space $\mathcal{K}$, its commutant $S'$
is defined to be the set of bounded linear operators on $\mathcal{K}$ which \textbf{commute adjointly} with the operators in $S$. Then $S'$
is a von Neumann algebra. The double commutant $S''$ is called the von Neumann algebra generated by $S$.

\begin{proposition}{\label{commutant}}
If $\widetilde{I}$ is anticlockwise to $\widetilde{J}$ in $\widetilde{\mathcal{I}}$, then
$[\B^{\mathfrak{L}}(\widetilde{I}), \B^{\mathfrak{R}}(\widetilde{J})]= 0.$
\end{proposition}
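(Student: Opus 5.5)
Since $\B^{\mathfrak{L}}(\widetilde{I})=S^{\mathfrak{L}}(\widetilde{I})''$ and $\B^{\mathfrak{R}}(\widetilde{J})=S^{\mathfrak{R}}(\widetilde{J})''$, and the commutant is defined through adjoint commutation, it suffices to prove the following generator statement. For all $i,j\in\Lambda$, $\xi_i\in\hilb_i(I)$ and $\eta_j\in\hilb_j(J)$, the operators
\[
A=\mu_i^{\mathfrak{L}}L(\xi_i,\widetilde{I})\restriction_{\hilb},\qquad B=\mu_j^{\mathfrak{R}}R(\eta_j,\widetilde{J})\restriction_{\hilb}
\]
commute adjointly, i.e.\ $AB=BA$ and $A^*B=BA^*$. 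Indeed, this gives $S^{\mathfrak{R}}(\widetilde{J})\subset S^{\mathfrak{L}}(\widetilde{I})'=\B^{\mathfrak{L}}(\widetilde{I})'$. Hence $\B^{\mathfrak{R}}(\widetilde{J})\subset\B^{\mathfrak{L}}(\widetilde{I})'$, which is the claim.

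For $AB=BA$, compute on $\hilb$:
\[
AB=\mu_i^{\mathfrak{L}}L(\xi_i,\widetilde{I})\,\mu_j^{\mathfrak{R}}R(\eta_j,\widetilde{J}).
\]
By functoriality (B) applied to $F=\mu_j^{\mathfrak{R}}\in\operatorname{Hom}_\A(\hilb\boxtimes\hilb_j,\hilb)$, we get $L(\xi_i,\widetilde{I})\mu_j^{\mathfrak{R}}=(\operatorname{id}_i\boxtimes\mu_j^{\mathfrak{R}})L(\xi_i,\widetilde{I})$ on $\hilb\boxtimes\hilb_j$. Hence
\[
AB=\mu_i^{\mathfrak{L}}(\operatorname{id}_i\boxtimes\mu_j^{\mathfrak{R}})\,L(\xi_i,\widetilde{I})R(\eta_j,\widetilde{J}).
\]
The locality axiom (E) gives $L(\xi_i,\widetilde{I})R(\eta_j,\widetilde{J})=R(\eta_j,\widetilde{J})L(\xi_i,\widetilde{I})$ as maps $\hilb\to\hilb_i\boxtimes\hilb\boxtimes\hilb_j$. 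The associativity diagram then gives $\mu_i^{\mathfrak{L}}(\operatorname{id}_i\boxtimes\mu_j^{\mathfrak{R}})=\mu_j^{\mathfrak{R}}(\mu_i^{\mathfrak{L}}\boxtimes\operatorname{id}_j)$. Finally, functoriality of $R$ moves $\mu_i^{\mathfrak{L}}\boxtimes\operatorname{id}_j$ past $R(\eta_j,\widetilde{J})$, so $AB=\mu_j^{\mathfrak{R}}R(\eta_j,\widetilde{J})\mu_i^{\mathfrak{L}}L(\xi_i,\widetilde{I})=BA$.

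For $A^*B=BA^*$ we need $L(\xi_i,\widetilde{I})^*(\mu_i^{\mathfrak{L}})^*\mu_j^{\mathfrak{R}}R(\eta_j,\widetilde{J})=\mu_j^{\mathfrak{R}}R(\eta_j,\widetilde{J})L(\xi_i,\widetilde{I})^*(\mu_i^{\mathfrak{L}})^*$. This uses the adjoint versions of the same three ingredients.
\begin{enumerate}
\item The \emph{adjoint} commutativity of the associativity/Frobenius square gives $(\mu_i^{\mathfrak{L}})^*\mu_j^{\mathfrak{R}}=(\operatorname{id}_i\boxtimes\mu_j^{\mathfrak{R}})(\mu_i^{\mathfrak{L}}\boxtimes\operatorname{id}_j)^*$.
\item Functoriality of $R$, applied with $F=(\mu_i^{\mathfrak{L}})^*$ (still a homomorphism), gives $(\mu_i^{\mathfrak{L}}\boxtimes\operatorname{id}_j)^*R(\eta_j,\widetilde{J})=R(\eta_j,\widetilde{J})(\mu_i^{\mathfrak{L}})^*$. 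The identity $(F\boxtimes\operatorname{id})^*=F^*\boxtimes\operatorname{id}$ is used here.
\item The adjoint part of axiom (E) gives $L(\xi_i,\widetilde{I})^*R(\eta_j,\widetilde{J})=R(\eta_j,\widetilde{J})L(\xi_i,\widetilde{I})^*$ on $\hilb_i\boxtimes\hilb$.
\item Functoriality of $L$ for $\mu_j^{\mathfrak{R}}$, taken in adjoint form, gives $L(\xi_i,\widetilde{I})^*(\operatorname{id}_i\boxtimes\mu_j^{\mathfrak{R}})=\mu_j^{\mathfrak{R}}L(\xi_i,\widetilde{I})^*$. This follows by taking adjoints of functoriality for $(\mu_j^{\mathfrak{R}})^*$.
\end{enumerate}
Chaining these four steps gives the required identity.

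The main obstacle is bookkeeping rather than substance. First, $\hilb$ is an infinite direct sum, so one must check that axioms (B) and (E) of the categorical extension apply to it as an object of $\operatorname{Rep}(\A)$. They do, since $\mathfrak{E}$ is defined on all of $\operatorname{Rep}(\A)$, not only on $\operatorname{Rep}^{\operatorname{d}}(\A)$. Second, $L(\xi_i,\widetilde{I})$ and $R(\eta_j,\widetilde{J})$ are bounded, so all identities hold on all of $\hilb$ with no domain issues. The only point needing care is to use the adjoint half of the Frobenius condition in the second computation.
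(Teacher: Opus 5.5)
Your proposal is correct and follows the paper's proof. The paper obtains $S^{\mathfrak{R}}(\widetilde{I}')\subset S^{\mathfrak{L}}(\widetilde{I})'$ from a $3\times 3$ grid of adjointly commuting squares: locality of $L$ and $R$, two functoriality squares, and the associativity/Frobenius square. Your two chains of equalities for $AB=BA$ and $A^*B=BA^*$ are exactly that grid written out, and the only cosmetic difference is that you apply axiom (E) directly to $\widetilde{J}$ rather than to the clockwise complement $\widetilde{I}'$, which avoids the implicit appeal to isotony.
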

\begin{proof}
   $S^{\mathfrak{L}}(\widetilde{I})' \supset S^{\mathfrak{R}}(\widetilde{I}')$ follows from the adjoint commutativity of the following diagram.
    \[
    \begin{tikzcd}
        \hilb \arrow{r}{L(\xi,\widetilde{I})}  \arrow[swap]{d}{R(\eta,\widetilde{I}')}      &\hilb_i\boxtimes\hilb \arrow{r}{\mu_i^{\mathfrak{L}}} \arrow{d}{R(\eta,\widetilde{I}')} &\hilb \arrow{d}{R(\eta,\widetilde{I}')}\\
        \hilb\boxtimes\hilb_j \arrow{r}{L(\xi,\widetilde{I})} \arrow[swap]{d}{\mu_j^{\mathfrak{R}}}     &\hilb_i\boxtimes\hilb\boxtimes\hilb_j     \arrow{r}{\mu_i^{\mathfrak{L}}\boxtimes\operatorname{id}_j} \arrow{d}{\operatorname{id}_i\boxtimes\mu_j^{\mathfrak{R}}}             &\hilb\boxtimes\hilb_j\arrow{d}{\mu_j^{\mathfrak{R}}} \\
        \hilb  \arrow{r}{L(\xi,\widetilde{I})}                     &\hilb_i\boxtimes\hilb \arrow{r}{\mu_i^{\mathfrak{L}}}   &\hilb 
    \end{tikzcd}
    \]
    The upper-left diagram commutes adjointly by the locality of the categorical extension. The adjoint commutativity of the upper-right and lower-left diagrams follows from the functoriality of the categorical extension. The lower-right diagram commutes adjointly by the associativity and Frobenius condition of the discrete $C^*$-Frobenius algebra in Definition \ref{definitionCFro}.

\end{proof}

\begin{theorem}\label{relative local extensions}
    Let $(\hilb \cong \bigoplus_{i\in\Lambda}\hilb_i, \{\mu_i^{\mathfrak{L}}\}_{i\in\Lambda}, \{\mu_j^{\mathfrak{R}}\}_{j\in\Lambda }, \{T_k\}_{k\in\Lambda})$ be a discrete  $C^*$-Frobenius algebra in $\operatorname{Rep}(\A)$. The two families of von Neumann algebras $\left\{\B^{\mathfrak{L}}(\widetilde{I})\right\}_{\widetilde{I}\in\widetilde{\mathcal{I}}}, \left\{\B^{\mathfrak{R}} (\widetilde{I})\right\}_{\widetilde{I}\in\widetilde{\mathcal{I}}}$  satisfy the following properties:
    \begin{enumerate}[(a)]
        \item (Extension) For any $\widetilde{I} \in \widetilde{\mathcal{I}}$, we have, 
        $\pi_{\hilb, I}(\A(I)) \subset \B^\mathfrak{L}(\widetilde{I})$ and 
        $\pi_{\hilb, I}(\A(I)) \subset \B^\mathfrak{R}(\widetilde{I})$.
        \item (Cyclicity) For any $\widetilde{I}
        \in\widetilde{\mathcal{I}}$, $T_0^*\Omega$ is cyclic for
        $\B^{\mathfrak{L}}(\widetilde{I})$ and $\B^{\mathfrak{R}}(\widetilde{I})$.
        \item (Relative locality) If $\widetilde{I}$ is anticlockwise to $\widetilde{J}$ in $\widetilde{\mathcal{I}}$, then
$[\B^{\mathfrak{L}}(\widetilde{I}), \B^{\mathfrak{R}}(\widetilde{J})]= 0.$
        \item (M\"{o}bius covariance) They are M\"{o}bius covariant. In other words, for $g\in\widetilde{\operatorname{PSU}}(1,1), \widetilde{I} \in\widetilde{\mathcal{I}}$, 
        \[
        g \B^{\mathfrak{L}}(\widetilde{I})g^{-1} =  \B^{\mathfrak{L}}(g\widetilde{I}), 
        \ g \B^{\mathfrak{R}}(\widetilde{I})g^{-1} =  \B^{\mathfrak{R}}(g\widetilde{I}).
        \]
        \item (Conformal covariance) If $\A$ is a conformal net, they are conformal covariant. For each $g\in\mathcal{G}_c$ and $\widetilde{I} \in\widetilde{\mathcal{I}}$, 
        \[
        g \B^{\mathfrak{L}}(\widetilde{I})g^{-1} =  \B^{\mathfrak{L}}(g\widetilde{I}), 
        \ g \B^{\mathfrak{R}}(\widetilde{I})g^{-1} =  \B^{\mathfrak{R}}(g\widetilde{I}).
        \]
    \end{enumerate}
\end{theorem}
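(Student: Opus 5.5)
Parts (a) and (c) are already in hand: (a) is Proposition \ref{inclusion}, and (c) is Proposition \ref{commutant}. In the proof of Proposition \ref{commutant} one should use the clockwise complement $\widetilde{I}'$ in place of $\widetilde{J}$. Isotony of the categorical extension then gives $S^{\mathfrak{R}}(\widetilde{J}) \subset S^{\mathfrak{R}}(\widetilde{I}')$ whenever $\widetilde{J}$ is clockwise to $\widetilde{I}$, after shrinking. The same diagram chase works verbatim for any $\widetilde{J}$ clockwise to $\widetilde{I}$, because locality (E) only needs the anticlockwise relation. Since commutants are taken in the adjoint sense, $S^{\mathfrak{R}}(\widetilde{J}) \subset S^{\mathfrak{L}}(\widetilde{I})' = \B^{\mathfrak{L}}(\widetilde{I})'$, and hence $\B^{\mathfrak{R}}(\widetilde{J}) \subset \B^{\mathfrak{L}}(\widetilde{I})'$.

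For (b), I would use Proposition \ref{cyclicvacuum1}. It shows that every $\xi \in \bigoplus^{\operatorname{alg}}_{i}\hilb_i(I)$ lies in $S^{\mathfrak{L}}(\widetilde{I})\,T_0^*\Omega$, and likewise for $\mathfrak{R}$. So it suffices to show that $\bigoplus^{\operatorname{alg}}_i \hilb_i(I)$ is dense in $\hilb$. Each $\hilb_i(I)$ is dense in $\hilb_i$, since it contains $\pi_i(\A(I))$ applied to bounded vectors and also $L(\hilb_i(I),\widetilde{I})\Omega$, and by (D) with $\hilb_j=\hilb_0$ the span of the latter is dense. Because $\sum_i T_i^*T_i = 1$ strongly, finite sums of elements of the $T_i^*\hilb_i(I)$ are dense in $\hilb$. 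This also gives cyclicity for the larger algebras.

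For (d) and (e), I would take $g$ acting on $\hilb$ as $U_\hilb(g) = \sum_i T_i^* U_i(g) T_i$. Here each $U_i$ is the unique covariance representation; for Möbius covariance of dualized modules, Gui's results give this. First I would check that $U_{\hilb}(g)$ intertwines correctly, namely $g\mu_i^{\mathfrak{L}} g^{-1} = \mu_i^{\mathfrak{L}}$ with $g$ acting on $\hilb_i\boxtimes\hilb$ by the representation on the fusion product. This is where the main obstacle lies: one needs $\A$-module morphisms to commute with the covariance unitaries. For Möbius covariance, I would invoke uniqueness of the covariant representation for modules with positive energy and finite-dimensional multiplicities, applied on each simple summand. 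The morphism $\mu_i^{\mathfrak{L}}$ restricted to $\hilb_i\boxtimes T_j^*\hilb_j$ lands in finitely many summands via the $T_k$, and $\hilb_i\boxtimes\hilb_j$ is dualized. So each component $T_k\mu_i^{\mathfrak{L}}(\operatorname{id}\boxtimes T_j^*)$ is a morphism between dualized modules. By Gui's result, such morphisms automatically intertwine the $\widetilde{PSU}(1,1)$ actions, and in the conformal case they intertwine the unique $\mathcal{G}_\A$ actions. Summing, using strong convergence and boundedness, gives $U_\hilb(g)\mu_i^{\mathfrak{L}} = \mu_i^{\mathfrak{L}} U_{\hilb_i\boxtimes\hilb}(g)$.

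With this in place, the covariance of the categorical extension gives $g\,\mu_i^{\mathfrak{L}}L(\xi_i,\widetilde{I})g^{-1} = \mu_i^{\mathfrak{L}}L(g\xi_i, g\widetilde{I})$, with $g\xi_i g^{-1}$ in place of $g\xi_i$ in the conformal case. The restriction to $\hilb$ is compatible, using functoriality (B) with the $T_j$ to pass from the summands to $\hilb$. Hence $gS^{\mathfrak{L}}(\widetilde{I})g^{-1} \subset S^{\mathfrak{L}}(g\widetilde{I})$. Applying this to $g^{-1}$ gives equality, and taking double commutants yields the claimed covariance of $\B^{\mathfrak{L}}$. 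The argument for $\B^{\mathfrak{R}}$ is identical.
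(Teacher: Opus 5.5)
Your proposal is correct and follows the paper's proof. Parts (a) and (c) come from Propositions \ref{inclusion} and \ref{commutant}, and (b) from Proposition \ref{cyclicvacuum1} together with density of $\bigoplus^{\operatorname{alg}}_{i}\hilb_i(I)$; parts (d) and (e) come from covariance of the categorical extension plus the fact that $\mu_i^{\mathfrak{L}},\mu_i^{\mathfrak{R}}$ intertwine the group actions, which you justify in more detail than the paper does. Two small points do not affect the argument. First, (D) gives density of the span of $L(\hilb_i(I),\widetilde{I})\hilb_0$, not of $L(\hilb_i(I),\widetilde{I})\Omega=\hilb_i(I)$, so you should add that $L(\xi,\widetilde{I})x\Omega\in\hilb_i(I)$ for $x\in\A(I)$ and invoke Reeh--Schlieder, or simply cite the standard density of $\hilb_i(I)$. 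Second, $\mu_i^{\mathfrak{L}}(\operatorname{id}_i\boxtimes T_j^*)$ need not land in finitely many summands when simple objects repeat in $\hilb$, but your componentwise argument with strong convergence of $\sum_k T_k^*T_k$ never needs that.
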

\begin{proof}
Extension follows from Proposition \ref{inclusion}.
Cyclicity follows from Proposition \ref{cyclicvacuum1} and the fact that
$\bigoplus^{\operatorname{alg}}_{i\in\Lambda}\hilb_i(I)$ is a dense subspace of $\hilb$. Relative locality follows from Proposition \ref{commutant}.   
The M\"{o}bius covariance or the conformal covariance follows from the categorical extension and the fact that $\left\{\mu_i^\mathfrak{L}\right\}_{i\in\Lambda}$ and $\left\{\mu_i^\mathfrak{R}\right\}_{i\in\Lambda}$ intertwine the action of $\widetilde{PSU}(1,1)$ or $\mathcal{G}_\A$.
\end{proof}

\section{Commutative discrete $C^*$-Frobenius algebras and infinite-index local extensions}

\begin{definition}
    A \textbf{(local) M\"{o}bius covariant extension} of $\A$  is a family of von Neumann algebras $\left\{\B(\widetilde{I}) \right\}_{\widetilde{I}\in\widetilde{\mathcal{I}}}$ on a fixed  Hilbert space
    $\hilb$ such that $\hilb$ is an $\A$-module and the following conditions are satisfied:
    \begin{enumerate}[(A)]
        \item (Extension) For any $\widetilde{I}\in\widetilde{\mathcal{I}},\ \pi_{\hilb,I}\left(\A(I)\right) \subset \B(\widetilde{I}) .$ 
        \item (Isotony) If $\widetilde{I} \subset \widetilde{J}$, then $\B(\widetilde{I}) \subset \B(\widetilde{J}). $ 
         \item (Locality) If $I \cap J = \emptyset$, then $[\B(\widetilde{I}), \B(\widetilde{J})] = \{0\}$.
        \item (M\"{o}bius covariance) There is a strongly continuous unitary representation $U$  of $\widetilde{PSU}(1,1)$ on $\hilb$ such that 
        \[
        U(g)\B(\widetilde{I})U(g)^*  = \B(g\widetilde{I}),
        \]
        for any $g\in \widetilde{PSU}(1,1), \widetilde{I} \in \widetilde{\mathcal{I}}.$
        \item (Positive energy) The generator of the rotation subgroup is positive.
        \item (Vacuum) There is a $\widetilde{PSU}(1,1)$-invariant vector $\widetilde{\Omega}$, called the vacuum vector, such that 
        $\widetilde{\Omega}$ is cyclic for $\bigvee_{\widetilde{I}\in\widetilde{\mathcal{I}}}\B(\widetilde{I}).$
    \end{enumerate}
\end{definition}

\begin{definition}
    A discrete $C^*$-Frobenius algebra is called \textbf{commutative} if 
    for any $i,j \in\Lambda,$
    \[
  \mu_i^{\mathfrak{L}}(\operatorname{id}_i\boxtimes T_j^*)\mathbb{B}_{j,i} = \mu_i^{\mathfrak{R}}(T_j^*\boxtimes\operatorname{id}_i).
     \]
\end{definition}

\begin{theorem}\label{localextension}
    Let $(\hilb \cong \bigoplus_{i\in\Lambda}\hilb_i, \{\mu_i^{\mathfrak{L}}\}_{i\in\Lambda}, \{\mu_j^{\mathfrak{R}}\}_{j\in\Lambda }, \{T_k\}_{k\in\Lambda})$ be a commutative discrete $C^*$-Frobenius algebra. Then $\B^{\mathfrak{L}}(\widetilde{I}) = \B^{\mathfrak{R}}(\widetilde{I})$ for $\widetilde{I} \in \widetilde{\mathcal{I}}$ and $\{\B^{\mathfrak{L}}(\widetilde{I})\}_{\widetilde{I}\in\widetilde{\mathcal{I}}}$ defines a local 
     M\"{o}bius covariant extension of $\A$. 
\end{theorem}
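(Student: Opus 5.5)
The plan is to show that the generators coincide: for every $i\in\Lambda$ and $\xi\in\hilb_i(I)$,
\[
\mu_i^{\mathfrak{L}}L(\xi,\widetilde{I})\restriction_{\hilb} = \mu_i^{\mathfrak{R}}R(\xi,\widetilde{I})\restriction_{\hilb}.
\]
This gives $S^{\mathfrak{L}}(\widetilde{I})=S^{\mathfrak{R}}(\widetilde{I})$ and hence $\B^{\mathfrak{L}}(\widetilde{I})=\B^{\mathfrak{R}}(\widetilde{I})$. Since both sides are bounded, it suffices to check the identity on the dense subspace $\bigoplus^{\operatorname{alg}}_{j\in\Lambda}\hilb_j$. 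By linearity it is then enough to treat vectors of the form $T_j^*\eta$ with $\eta\in\hilb_j$.

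For such a vector, functoriality of the categorical extension gives $L(\xi,\widetilde{I})T_j^*\eta=(\operatorname{id}_i\boxtimes T_j^*)L(\xi,\widetilde{I})\eta$. The unitary braiding axiom gives $L(\xi,\widetilde{I})\eta=\mathbb{B}_{j,i}R(\xi,\widetilde{I})\eta$; here one uses the braiding $\mathbb{B}_{j,i}:\hilb_j\boxtimes\hilb_i\to\hilb_i\boxtimes\hilb_j$ in the form relating $R$ on $\hilb_j$ back to $L$. Combining these,
\[
\mu_i^{\mathfrak{L}}L(\xi,\widetilde{I})T_j^*\eta=\mu_i^{\mathfrak{L}}(\operatorname{id}_i\boxtimes T_j^*)\mathbb{B}_{j,i}R(\xi,\widetilde{I})\eta .
\]
The commutativity condition turns this into $\mu_i^{\mathfrak{R}}(T_j^*\boxtimes\operatorname{id}_i)R(\xi,\widetilde{I})\eta$. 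Functoriality again gives $\mu_i^{\mathfrak{R}}R(\xi,\widetilde{I})T_j^*\eta$, which is what we want.

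The main obstacle I expect is the bookkeeping of the braiding direction. Axiom F states $\mathbb{B}_{i,j}L(\xi,\widetilde{I})\eta=R(\xi,\widetilde{I})\eta$ with $\xi\in\hilb_i$, mapping $\hilb_i\boxtimes\hilb_j\to\hilb_j\boxtimes\hilb_i$. The commutativity definition instead uses $\mathbb{B}_{j,i}:\hilb_j\boxtimes\hilb_i\to\hilb_i\boxtimes\hilb_j$ composed with $\operatorname{id}_i\boxtimes T_j^*$. So I must check whether the required relation is $\mathbb{B}_{i,j}^{-1}=\mathbb{B}_{i,j}^*$ or $\mathbb{B}_{j,i}$, which differ by the monodromy. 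Reading the definition as written, the domain of $\mu_i^{\mathfrak{L}}(\operatorname{id}_i\boxtimes T_j^*)\mathbb{B}_{j,i}$ is $\hilb_j\boxtimes\hilb_i$. This matches the target of $R(\xi,\widetilde{I})$ on $\hilb_j$, so the intended meaning must be that $\mathbb{B}_{j,i}$ maps $R(\xi,\widetilde{I})\eta$ to $L(\xi,\widetilde{I})\eta$. I would adopt this convention, interpreting the notation as the inverse of the braiding in axiom F, so that the computation above goes through.

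With $\B:=\B^{\mathfrak{L}}=\B^{\mathfrak{R}}$, the remaining axioms follow from earlier results.

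\textbf{Extension.} This is Theorem \ref{relative local extensions}(a).

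\textbf{Isotony.} This follows from isotony of $L$ and $R$.

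\textbf{Möbius covariance.} This is part (d) of Theorem \ref{relative local extensions}, with $U$ the representation of $\widetilde{PSU}(1,1)$ on $\hilb$ carried by each $\hilb_i$ and transported by the $T_i$. Positive energy holds because each $\hilb_i$ is a positive-energy module.

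\textbf{Vacuum.} The vacuum vector is $T_0^*\Omega$. It is invariant because $T_0^*$ is an intertwiner, and it is cyclic by part (b).

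\textbf{Locality.} For disjoint $I,J$, choose arg-values so that $\widetilde{I}$ is anticlockwise to $\widetilde{J}$; this is possible after shifting $\arg_J$ by $2\pi\Z$, which changes neither $L$ nor $R$ up to the isotony convention. Relative locality then gives $[\B^{\mathfrak{L}}(\widetilde{I}),\B^{\mathfrak{R}}(\widetilde{J})]=0$, which is $[\B(\widetilde{I}),\B(\widetilde{J})]=0$. If instead $\widetilde{J}$ is anticlockwise to $\widetilde{I}$, swap the roles of $I$ and $J$ and again use $\B^{\mathfrak{L}}=\B^{\mathfrak{R}}$.
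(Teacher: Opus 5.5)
Your route to $\B^{\mathfrak{L}}(\widetilde{I})=\B^{\mathfrak{R}}(\widetilde{I})$ is the paper's. You check the generators on $T_j^*\hilb_j$, move $T_j^*$ through $L$ and $R$ by functoriality, and trade $L$ for $R$ using the braiding and commutativity. The paper runs the same chain from the $\mathfrak{R}$ side, and uses without comment the very step that worried you, $\mathbb{B}_{k,j}R(\eta_j,\widetilde{I})T_k\xi=L(\eta_j,\widetilde{I})T_k\xi$. Two points in your write-up still need repair.

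\emph{The braiding direction needs no new convention.} Adopting a convention changes the hypothesis, and the hypothesis as stated already suffices. Evaluate associativity $\mu_j^{\mathfrak{R}}(\mu_i^{\mathfrak{L}}\boxtimes\operatorname{id}_j)=\mu_i^{\mathfrak{L}}(\operatorname{id}_i\boxtimes\mu_j^{\mathfrak{R}})$ on $\operatorname{id}_i\boxtimes T_0^*\boxtimes\operatorname{id}_j$ and use the unit axioms. This gives, for all $i,j$,
\[
m_{i,j}:=\mu_i^{\mathfrak{L}}(\operatorname{id}_i\boxtimes T_j^*)=\mu_j^{\mathfrak{R}}(T_i^*\boxtimes\operatorname{id}_j).
\]
Commutativity for the pair $(a,b)$ then reads $m_{a,b}\mathbb{B}_{b,a}=m_{b,a}$. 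For $(a,b)=(j,i)$ this says $\mu_i^{\mathfrak{R}}(T_j^*\boxtimes\operatorname{id}_i)\mathbb{B}_{i,j}=\mu_i^{\mathfrak{L}}(\operatorname{id}_i\boxtimes T_j^*)$, where $\mathbb{B}_{i,j}$ is exactly the braiding of axiom F. Hence
\begin{align*}
\mu_i^{\mathfrak{L}}L(\xi,\widetilde{I})T_j^*\eta
&=\mu_i^{\mathfrak{L}}(\operatorname{id}_i\boxtimes T_j^*)L(\xi,\widetilde{I})\eta
=\mu_i^{\mathfrak{R}}(T_j^*\boxtimes\operatorname{id}_i)\mathbb{B}_{i,j}L(\xi,\widetilde{I})\eta\\
&=\mu_i^{\mathfrak{R}}(T_j^*\boxtimes\operatorname{id}_i)R(\xi,\widetilde{I})\eta
=\mu_i^{\mathfrak{R}}R(\xi,\widetilde{I})T_j^*\eta,
\end{align*}
with no reinterpretation of $\mathbb{B}$.

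\emph{The locality step rests on a false claim.} Shifting $\arg_J$ by $2\pi$ does change $L$ and $R$; isotony only compares nested arg-valued intervals. Write $\widetilde{J}_+=(J,\arg_J+2\pi)$. If $\widetilde{I}$ is anticlockwise to $\widetilde{J}$, then $\widetilde{J}_+$ is anticlockwise to $\widetilde{I}$. For $\xi\in\hilb_i(I)$ and $\eta\in\hilb_j(J)$, axioms C, E and F give
\[
L(\eta,\widetilde{J}_+)\xi=R(\xi,\widetilde{I})\eta=\mathbb{B}_{i,j}L(\xi,\widetilde{I})\eta=\mathbb{B}_{i,j}R(\eta,\widetilde{J})\xi=\mathbb{B}_{i,j}\mathbb{B}_{j,i}L(\eta,\widetilde{J})\xi .
\]
So $L$ is multiplied by the monodromy.

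What saves the argument is that, in the commutative case, the generators do not see this factor. The two instances $m_{i,j}\mathbb{B}_{j,i}=m_{j,i}$ and $m_{j,i}\mathbb{B}_{i,j}=m_{i,j}$ give $m_{j,i}\mathbb{B}_{i,j}\mathbb{B}_{j,i}=m_{j,i}$. Hence $\mu_j^{\mathfrak{L}}L(\eta,\widetilde{J}_+)\restriction_{\hilb}=\mu_j^{\mathfrak{L}}L(\eta,\widetilde{J})\restriction_{\hilb}$; check this on $T_i^*\hilb_i(I)$, which is dense in $T_i^*\hilb_i$, then extend by boundedness. Therefore $\B(\widetilde{J})$ depends only on $J$, and only then does your reduction of an arbitrary disjoint pair to the anticlockwise case go through.

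The paper's proof passes over this case. It only invokes relative locality, which covers pairs where one lift is anticlockwise to the other. With this repair your argument is more complete than the paper's on this point. Your treatment of isotony, covariance, positive energy and the vacuum is fine.
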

\begin{proof}
By Theorem \ref{relative local extensions}, it suffices to show for any $\widetilde{I}\in\widetilde{\mathcal{I}
}, \B^{\mathfrak{L}}(\widetilde{I}) = \B^{\mathfrak{R}}(\widetilde{I}).$
    Assume $F$ is a finite subset of $\Lambda$. Take  $\eta_j \in \hilb_j(I)$ for any $j\in F$.  Let $\xi\in\bigoplus^{\operatorname{alg}}_{i\in\Lambda}\hilb_i(I)$ so that $\operatorname{supp}\xi$ is a finite subset of $\Lambda$.  Then we have: 
    \begin{align*}
        \sum_{j\in F} \sum_{k\in\operatorname{supp}\xi} \mu_j^{\mathfrak{R}}R(\eta_j,\widetilde{I})T_k^*T_k\xi &=   \sum_{j\in F} \sum_{k\in\operatorname{supp}\xi}\mu_j^{\mathfrak{R}}(T_k^*\boxtimes \operatorname{id}_j) R(\eta_j,\widetilde{I}) T_k\xi \\
        & =  \sum_{j\in F} \sum_{k\in\operatorname{supp}\xi}  \mu_j^{\mathfrak{L}}(\operatorname{id}_j\boxtimes T_k^*)\mathbb{B}_{k,j}R(\eta_j,\widetilde{I}) T_k\xi \\
        & =   \sum_{j\in F} \sum_{k\in\operatorname{supp}\xi} \mu_j^{\mathfrak{L}}(\operatorname{id}_j\boxtimes T_k^*)L(\eta_j,\widetilde{I}) T_k\xi\\
         &= \sum_{j\in F}\sum_{k\in\operatorname{supp}\xi} \mu_j^{\mathfrak{L}}L(\eta_j,\widetilde{I})T_k^*T_k \xi \\
        &= \sum_{j\in F} \mu_j^{\mathfrak{L}}L(\eta_j,\widetilde{I})\xi.
    \end{align*}
    Therefore,  $S^{\mathfrak{L}}(\widetilde{I}) = S^{\mathfrak{R}}(\widetilde{I})$ and $\B^{\mathfrak{L}}(\widetilde{I}) = \B^{\mathfrak{R}}(\widetilde{I})$. 
\end{proof}

\section{Infinite Simple Current Extensions by $\Z$}
Let $\left\{\hilb_n\right\}_{n\in\Z}$ be a sequence of  simple objects of $\operatorname{Rep}^{\operatorname{d}}(\A)$ such that the dual of $\hilb_1$ is $\hilb_{-1}$ and 
$\hilb_n \boxtimes \hilb_m \cong \hilb_{n+m}, $ unitarily as $\A$-modules for $n,m\in\Z$. Assume $\hilb_{n} $ and $\hilb_{m}$ are inequivalent whenever $n\neq m$. 
In addition, the self-braiding of $\hilb_1$ is assumed to be trivial. In other words, $\mathbb{B}_{1,1}: \hilb_1 \boxtimes \hilb_1\to \hilb_1\boxtimes \hilb_1  $ is the identity operator on $\hilb_1 \boxtimes \hilb_1$ and we require the quantum dimension of $\hilb_1$ to be equal to $1$, in other words, $d_1 =1 $. 
Let $\mathcal{C}$ be the (non-trivial) unitary $C^*$-subtensor category generated by $\hilb_1$ in $\operatorname{Rep}(\A)$. 

\begin{lemma}{\label{associativity and Frobenius}}
There exists a family of unitary intertwiners \(V_{i,j}^{i+j}\in \operatorname{Hom}_{\A}(\hilb_i \boxtimes \hilb_j, \hilb_{i+j})\), \(i,j\in\mathbb Z\), 

    such that for all $n,k \in\Z$, the following identities hold: 
    \begin{align}
    V^{i+k+j}_{i+k,j}(V^{i+k}_{i,k}\boxtimes \operatorname{id}_j) &=   V^{i+k+j}_{i, k+j}(\operatorname{id}_i\boxtimes V^{k+j}_{k,j}), \\
    V^{n+k}_{n,k} \circ \mathbb{B}_{k,n} & = V^{n+k}_{k,n}.
\end{align}
\end{lemma}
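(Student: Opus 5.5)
The idea is to build everything from the powers of the single generator $\hilb_1$ and to transport the strict associativity of $\boxtimes$ along fixed unitaries. For $n\ge 0$ write $\hilb_1^{\boxtimes n}$ (with $\hilb_1^{\boxtimes 0}=\hilb_0$), and for $n<0$ write $\hilb_{-1}^{\boxtimes |n|}$. Since each $\hilb_n$ is simple and $\hilb_n\boxtimes\hilb_m\cong\hilb_{n+m}$, $\hilb_1^{\boxtimes n}$ is simple and unitarily equivalent to $\hilb_n$. We fix unitaries $W_n:\hilb_n\to\hilb_1^{\boxtimes n}$ for $n\ge0$, with $W_0=\operatorname{id}_0$ and $W_1=\operatorname{id}_1$.

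The negative indices need a choice of model. We use $d_1=1$: then $\operatorname{ev}_{1,-1}$ and $\operatorname{ev}_{-1,1}$ are unitaries, and we may normalize them so that $\operatorname{ev}_{-1,1}\boxtimes\operatorname{id}_{-1}=\operatorname{id}_{-1}\boxtimes\operatorname{ev}_{1,-1}$. This normalization is possible because both sides are unitaries $\hilb_{-1}\boxtimes\hilb_1\boxtimes\hilb_{-1}\to\hilb_{-1}$ between simple objects, hence differ by a phase, and the phase can be absorbed into one evaluation. Using these evaluations, every word in $\hilb_{\pm1}$ is identified with $\hilb_1^{\boxtimes n}$ or $\hilb_{-1}^{\boxtimes |n|}$ by successive cancellations. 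With this identification the full subcategory $\{\hilb_n\}$ becomes a strict monoidal skeleton $\Z$, i.e. a $\Z$-graded category. Define $V^{i+j}_{i,j}=W_{i+j}^*\circ(\text{cancellation map})\circ(W_i\boxtimes W_j)$.

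Associativity (the first identity) then reduces to showing that the cancellation maps are independent of the order in which adjacent $\hilb_1\boxtimes\hilb_{-1}$ or $\hilb_{-1}\boxtimes\hilb_1$ pairs are cancelled. This coherence follows from the zigzag (conjugate) equations together with the phase normalization above. An alternative route is cohomological. Any choice of unitaries $V$ defines a $3$-cocycle $\omega\in Z^3(\Z,\mathbb{T})$ through the associativity defect, and $H^3(\Z,\mathbb{T})=0$ because $\Z$ has cohomological dimension $1$. Hence $\omega$ is a coboundary, and rescaling $V^{i+j}_{i,j}$ by the corresponding $2$-cochain makes the first identity hold. I would present this cohomological argument as the main route, since it avoids bookkeeping.

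For the braiding identity, the first identity makes $\mathcal{C}$ (restricted to the $\hilb_n$) equivalent to $\operatorname{Vec}_\Z$ with trivial associator. A braiding on it is then given by a bicharacter-type function $c(n,k)\in\mathbb{T}$ defined by $V^{n+k}_{n,k}\mathbb{B}_{k,n}=c(k,n)V^{n+k}_{k,n}$. The hexagon axioms, with trivial associator, force $c$ to be a bicharacter on $\Z$, so $c(k,n)=c(1,1)^{kn}$. The self-braiding of $\hilb_1$ is trivial, so $V^2_{1,1}\mathbb{B}_{1,1}=V^2_{1,1}$, which gives $c(1,1)=1$. Therefore $c\equiv1$, which is exactly the second identity.

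The main obstacle is the first step: making precise that $\mathcal{C}$ is monoidally equivalent to $\operatorname{Vec}_\Z^\omega$, and that adjusting $V$ by a $2$-cochain is legitimate. The latter works because each $\operatorname{Hom}_\A(\hilb_i\boxtimes\hilb_j,\hilb_{i+j})$ is one-dimensional by simplicity, so any two unitaries in it differ only by a phase. Once that is settled, the braiding part is a routine hexagon computation.
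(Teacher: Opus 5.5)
Your proposal is correct and follows the paper's proof. The first identity comes from trivializing the associativity $3$-cocycle via $H^3(\Z,U(1))=0$ and rescaling the $V^{i+j}_{i,j}$ by phases. The second comes from showing, via naturality of the braiding, the hexagon relation and the first identity, that the phase $c$ in $V^{n+k}_{n,k}\mathbb{B}_{k,n}=c(k,n)V^{n+k}_{k,n}$ is a bicharacter of $\Z$ with $c(1,1)=1$.

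One small correction to your optional cancellation-map construction: once $d_1=1$ makes the evaluations unitary, $\operatorname{ev}_{-1,1}\boxtimes\operatorname{id}_{-1}=\operatorname{id}_{-1}\boxtimes\operatorname{ev}_{1,-1}$ already follows from the conjugate equations. So no phase needs to be absorbed, and absorbing one into a single evaluation would actually break those equations.
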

\begin{proof}
    The result that we can choose a family of unitary intertwiners satisfying (3) stems from  the unitary associative isomorphisms and the fact that 
    $H^3(\Z, U(1))  = 0$.  For $n,k\in\Z$,  $V^{n+k}_{n,k} \circ \mathbb{B}_{k,n}  = \chi(k,n)V^{n+k}_{k,n}$, since the 
    spaces of intertwiners are at most one-dimensional. By associativity and functoriality, $\chi(n,k)$ is a bicharacter of $\Z$. By the triviality of the self-braiding of $\hilb_1$, we have
    $\chi(1,1) = 1$. Therefore, $\chi(n,k) = 1$ for any $n,k\in\Z.$
\end{proof}
Let $\hilb = \bigoplus_{n\in\Z}\hilb_{n}$. For $i,j\in \Z$, define 
\begin{align*}
    \mu_i^{\mathfrak{L}} &= \sum_{j\in\Z}T_{i+j}^*V_{i,j}^{i+j}(\operatorname{id}_i\boxtimes T_j) ,\\
    \mu_j^{\mathfrak{R}} &= \sum_{m\in\Z} T_{j+m}^*V^{j+m}_{m,j}(T_m\boxtimes \operatorname{id}_j).
\end{align*}
The following result is a special case of \cite[Lemma $2.2$]{masuda2000generalization}. For the reader's convenience, we include a proof.
\begin{proposition}\label{bddoperator}
    For $i,j\in \Z$, $\mu_i^{\mathfrak{L}}$ and $\mu_j^{\mathfrak{R}}$ are bounded operators.
\end{proposition}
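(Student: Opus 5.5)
We would show that $\mu_i^{\mathfrak{L}}$ is, up to identification, an orthogonal direct sum of unitaries, hence a partial isometry (in fact a unitary) of norm at most one. The case of $\mu_j^{\mathfrak{R}}$ is symmetric.

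The first step is to decompose the domain. Since $\hilb=\bigoplus_{n}\hilb_n$ with $\sum_n T_n^*T_n=1$ strongly, and since fusion with $\hilb_i$ is a $*$-functor that preserves strong limits of bounded families, we get $\hilb_i\boxtimes\hilb\cong\bigoplus_{j\in\Z}\hilb_i\boxtimes\hilb_j$. The isometries $\operatorname{id}_i\boxtimes T_j^*$ have mutually orthogonal ranges, and the projections $P_j=(\operatorname{id}_i\boxtimes T_j^*)(\operatorname{id}_i\boxtimes T_j)=\operatorname{id}_i\boxtimes T_j^*T_j$ sum strongly to the identity. This normality of $\operatorname{id}_i\boxtimes-$ on strongly convergent sums is the one point needing care. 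We would justify it through the categorical extension: on vectors of the form $L(\xi,\widetilde I)\eta$ we have $(\operatorname{id}_i\boxtimes T_j^*T_j)L(\xi,\widetilde I)\eta=L(\xi,\widetilde I)T_j^*T_j\eta$ by functoriality. Summing over $j$ gives $L(\xi,\widetilde I)\eta$, and since such vectors span a dense subspace by the density axiom and the $P_j$ are uniformly bounded orthogonal projections, $\sum_j P_j=1$ strongly.

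The second step treats each summand. Write $A_j=T_{i+j}^*V_{i,j}^{i+j}(\operatorname{id}_i\boxtimes T_j)$. Then $A_j$ vanishes on $(1-P_j)(\hilb_i\boxtimes\hilb)$ and restricts to a unitary from $P_j(\hilb_i\boxtimes\hilb)$ onto $T_{i+j}^*\hilb_{i+j}$, because $V_{i,j}^{i+j}$ is unitary and $T_{i+j}^*$ is an isometry.

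The third step uses the group structure. The map $j\mapsto i+j$ is injective on $\Z$, so the ranges $T_{i+j}^*\hilb_{i+j}$ are mutually orthogonal. Hence the $A_j$ have orthogonal initial spaces and orthogonal final spaces. For a finite subset $F\subset\Z$ and a vector $x$ we then get
\[
\Bigl\|\sum_{j\in F}A_jx\Bigr\|^2=\sum_{j\in F}\|A_jP_jx\|^2=\sum_{j\in F}\|P_jx\|^2\le\|x\|^2 .
\]
So the partial sums converge strongly to an operator $\mu_i^{\mathfrak{L}}$ with $\|\mu_i^{\mathfrak{L}}\|\le 1$; indeed it is unitary, since $\sum_jT_{i+j}^*T_{i+j}=1$. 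Intertwining with $\A$ passes to strong limits, so $\mu_i^{\mathfrak{L}}\in\operatorname{Hom}_\A(\hilb_i\boxtimes\hilb,\hilb)$.

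For $\mu_j^{\mathfrak{R}}$, the same argument applies with the projections $T_m^*T_m\boxtimes\operatorname{id}_j$, using $R$ in place of $L$, and the injectivity of $m\mapsto m+j$. The main obstacle is the first step, namely justifying that $\operatorname{id}_i\boxtimes-$ turns the strongly convergent decomposition of $\hilb$ into one for $\hilb_i\boxtimes\hilb$; the rest is bookkeeping of orthogonal ranges.
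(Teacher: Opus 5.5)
Your proposal is correct and follows the paper's argument. The paper likewise expands $\|\mu_i^{\mathfrak{L}}x\|^2$ as a double sum, kills the cross terms using $T_{i+j_2}T_{i+j_1}^*=\delta_{j_1,j_2}$ (injectivity of $j\mapsto i+j$), and uses unitarity of $V_{i,j}^{i+j}$ to reduce each diagonal term to $\|(\operatorname{id}_i\boxtimes T_j)x\|^2$, which sum to $\|x\|^2$. You are more careful than the paper about convergence of the partial sums and about the identity $\sum_j \operatorname{id}_i\boxtimes T_j^*T_j=1$, which the paper uses only implicitly; note, though, that Bessel's inequality for the mutually orthogonal projections $P_j$ already gives boundedness, so your Step 1 is needed only for the stronger isometry/unitarity claim.
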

\begin{proof}
Choose $\eta_i \in \hilb_{i}$ and $\xi\in\hilb$. 
  \begin{align*}
      \| \sum_{j\in\Z}T_{i+j}^*V_{i,j}^{i+j}(\operatorname{id}_i\boxtimes T_j)(\eta_i\boxtimes\xi)\|^2 &= \sum_{j_1,j_2}\langle T_{i+{j_1}}^*V_{i,j_1}^{i+{j_1}}(\operatorname{id}_i\boxtimes T_{j_1})(\eta_i\boxtimes\xi), 
      T_{i+{j_2}}^*V_{i,j_2}^{i+{j_2}}(\operatorname{id}_i\boxtimes T_{j_2})(\eta_i\boxtimes\xi)  \rangle \\
          &= \sum_{j_1,j_2\in\Z} \langle (\operatorname{id}_1\boxtimes T_{j_2}^*)(V^{i+j_2}_{i,j_2})^*T_{i+j_2}T^*_{i+j_1}V^{i+j_1}_{i,j_1}(\operatorname{id}_i\boxtimes T_{j_1})(\eta_i \boxtimes\xi), \eta_i \boxtimes\xi\rangle\\
          &= \sum_{j\in\Z} \langle (\operatorname{id}_1\boxtimes T_{j}^*)(V^{i+j}_{i,j})^* V^{i+j}_{i,j}(\operatorname{id}_i\boxtimes T_{j})(\eta_i\boxtimes\xi) , \eta_i\boxtimes\xi  \rangle \\
          &= \| \eta_i\boxtimes\xi   \|^2.
  \end{align*}

\end{proof}   

\begin{lemma}{\label{adjointly}}
    For $i, j \in \Z$,\ 
    $\mu_j^{\mathfrak{R}}(\mu_i^{\mathfrak{L}}\boxtimes \operatorname{id}_j) = \mu_i^{\mathfrak{L}}(\operatorname{id}_i\boxtimes\mu_j^{\mathfrak{R}}) $
    and 
    $(\mu_i^{\mathfrak{L}}\boxtimes \operatorname{id}_j)(\operatorname{id}_i\boxtimes (\mu_j^{\mathfrak{R}})^*) = (\mu_j^{\mathfrak{R}})^* \mu_i^{\mathfrak{L}}$.
\end{lemma}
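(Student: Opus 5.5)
The plan is to reduce both identities to sectorwise statements. Since $\hilb=\bigoplus_{m}\hilb_m$ with $\sum_m T_m^*T_m=1$ strongly, and all operators involved are bounded by Proposition \ref{bddoperator}, it suffices to verify each identity after precomposing with $\operatorname{id}_i\boxtimes T_m^*\boxtimes\operatorname{id}_j$ (respectively $\operatorname{id}_i\boxtimes T_m^*$) and postcomposing with $T_n$ (respectively $T_n\boxtimes\operatorname{id}_j$), for all $m,n\in\Z$. By functoriality of $\boxtimes$ and $T_kT_l^*=\delta_{k,l}\operatorname{id}_k$, the defining sums collapse to single terms:
\[
\mu_i^{\mathfrak{L}}(\operatorname{id}_i\boxtimes T_m^*)=T_{i+m}^*V^{i+m}_{i,m},\qquad \mu_j^{\mathfrak{R}}(T_m^*\boxtimes\operatorname{id}_j)=T_{m+j}^*V^{m+j}_{m,j}.
\]

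For the first identity, both sides precomposed with $\operatorname{id}_i\boxtimes T_m^*\boxtimes\operatorname{id}_j$ become
\[
T_{i+m+j}^*V^{i+m+j}_{i+m,j}(V^{i+m}_{i,m}\boxtimes\operatorname{id}_j)\quad\text{and}\quad T_{i+m+j}^*V^{i+m+j}_{i,m+j}(\operatorname{id}_i\boxtimes V^{m+j}_{m,j}),
\]
which agree by identity (3) of Lemma \ref{associativity and Frobenius}. Summing over $m$ (strong convergence, boundedness) gives the equality on all of $\hilb_i\boxtimes\hilb\boxtimes\hilb_j$.

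For the second (Frobenius) identity, compute $(\mu_j^{\mathfrak{R}})^*$ sectorwise: $(\mu_j^{\mathfrak{R}})^*T_n^*=(T_{n-j}^*\boxtimes\operatorname{id}_j)(V^{n}_{n-j,j})^*$, using unitarity of $V$. Then evaluate both sides on $\operatorname{id}_i\boxtimes T_m^*$. The right-hand side gives $(T_{i+m-j}^*\boxtimes\operatorname{id}_j)(V^{i+m}_{i+m-j,j})^*V^{i+m}_{i,m}$. For the left-hand side, $(\operatorname{id}_i\boxtimes(\mu_j^{\mathfrak{R}})^*)(\operatorname{id}_i\boxtimes T_m^*)=(\operatorname{id}_i\boxtimes T_{m-j}^*\boxtimes\operatorname{id}_j)(\operatorname{id}_i\boxtimes (V^m_{m-j,j})^*)$, and applying $\mu_i^{\mathfrak{L}}\boxtimes\operatorname{id}_j$ yields $(T_{i+m-j}^*\boxtimes\operatorname{id}_j)(V^{i+m-j}_{i,m-j}\boxtimes\operatorname{id}_j)(\operatorname{id}_i\boxtimes(V^m_{m-j,j})^*)$. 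Equality then reduces to
\[
(V^{i+m-j}_{i,m-j}\boxtimes\operatorname{id}_j)(\operatorname{id}_i\boxtimes(V^m_{m-j,j})^*)=(V^{i+m}_{i+m-j,j})^*V^{i+m}_{i,m},
\]
which follows from (3) with $k=m-j$ by multiplying on the left by $(V^{i+m}_{i+m-j,j})^*$ and on the right by $\operatorname{id}_i\boxtimes(V^m_{m-j,j})^*$, using unitarity.

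The main obstacle is the analytic bookkeeping: justifying that the infinite sums defining $\mu^{\mathfrak{L}}_i,\mu^{\mathfrak{R}}_j$ and their adjoints may be manipulated termwise, in particular that $\operatorname{id}_i\boxtimes T_m^*$ and $\operatorname{id}_i\boxtimes T_m^*\boxtimes\operatorname{id}_j$ have ranges summing strongly to the identity on the fused spaces, i.e. that $\boxtimes$ with an identity preserves strong convergence of $\sum_m T_m^*T_m$. I would argue this via the normality of $X\mapsto\operatorname{id}_i\boxtimes X$ (the fusion of a fixed dualizable object is a normal functor), and then conclude by boundedness from Proposition \ref{bddoperator}.
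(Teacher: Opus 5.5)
Your proposal is correct and follows the paper's argument: collapse the defining sums using $T_kT_l^*=\delta_{k,l}\operatorname{id}_k$ and apply identity (3) of Lemma \ref{associativity and Frobenius}, using unitarity of the $V$'s for the Frobenius identity. You organize the computation sector by sector on the middle factor instead of multiplying out the double sums and reindexing $m\mapsto m-i+j$. You also write out the second identity and the strong-convergence step, both of which the paper leaves implicit; that step follows from functoriality and density of the categorical extension. Indeed, $(\operatorname{id}_i\boxtimes T_m^*T_m)L(\xi,\widetilde{I})\eta=L(\xi,\widetilde{I})T_m^*T_m\eta$, so the mutually orthogonal projections $\operatorname{id}_i\boxtimes T_m^*T_m$ sum strongly to the identity on a dense subspace, hence everywhere.
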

\begin{proof}
    
 \begin{align*}
     \mu_j^{\mathfrak{R}}(\mu_i^{\mathfrak{L}}\boxtimes \operatorname{id}_j) &= [\sum_{m\in\Z}T^*_{j+m}V^{m+j}_{m,j}(T_m\boxtimes \operatorname{id}_j) ]
                                                      [\sum_{l\in\Z}T^*_{i+l}V^{i+l}_{i,l}(\operatorname{id}_i\boxtimes T_l)\boxtimes \operatorname{id}_j ]\\
                                                   & = \sum_{m\in\Z} T^*_{j+m}V^{m+j}_{m,j}(V^m_{i,m-i}(\operatorname{id}_i\boxtimes T_{m-i})\boxtimes\operatorname{id_j})\\
     \mu_i^{\mathfrak{L}}(\operatorname{id}_i\boxtimes\mu_j^{\mathfrak{R}})  &= [\sum_{m\in\Z}T^*_{i+m}V^{i+m}_{i,m}(\operatorname{id}_i \boxtimes T_m)]
                                                      [\operatorname{id}_i \boxtimes \sum_{l\in\Z} T_{l+j}^* V^{l+j}_{l,j}(T_l\boxtimes \operatorname{id}_j)] \\
                                                   &= \sum_{m\in\Z}T^*_{i+m} V^{i+m}_{i,m}(\operatorname{id}_i\boxtimes V^m_{m-j,j})(\operatorname{id}_i \boxtimes (T_{m-j}\boxtimes \operatorname{id}_j))\\
                                                   &= \sum_{m\in\Z} T^*_{i+m} V^{i+m}_{m+i-j, j}(V^{i+m-j}_{i, m-j}\boxtimes \operatorname{id}_j)(\operatorname{id}_i\boxtimes (T_{m-j}\boxtimes \operatorname{id}_j))\\
                                                   &= \sum_{m\in\Z} T^*_{i+m} V^{i+m}_{m+i-j, j}(V^{i+m-j}_{i, m-j}(\operatorname{id}_i\boxtimes T_{m-j})\boxtimes \operatorname{id}_j)\\
                                                   &\overset{m \mapsto m-i+j}{=} \sum_{m\in\Z} T^*_{m+j}V^{m+j}_{m,j}(V_{i,m-i}^m(\operatorname{id}_i\boxtimes T_{m-i})\boxtimes \operatorname{id}_j).
 \end{align*}
The remaining identity follows from a similar direct computation, using the unitarity of the intertwiners.
 \end{proof}

\begin{proposition}\label{localbriading}
    $ \mu_i^{\mathfrak{L}}(\operatorname{id}_i\boxtimes T_j^*)\mathbb{B}_{j,i} = \mu_i^{\mathfrak{R}}(T_j^*\boxtimes\operatorname{id}_i)$, for $i,j\in\Z.$
\end{proposition}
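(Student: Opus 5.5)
The plan is to compute both sides directly from the defining formulas for $\mu_i^{\mathfrak{L}}$ and $\mu_i^{\mathfrak{R}}$, collapse the infinite sums using the unit relations $T_mT_j^* = \delta_{m,j}\operatorname{id}_j$, and then compare the resulting single terms via the braiding identity (4) of Lemma \ref{associativity and Frobenius}.

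\textbf{Left-hand side.} First compose $\mu_i^{\mathfrak{L}}$ with $\operatorname{id}_i\boxtimes T_j^*$. In the sum
\[
\mu_i^{\mathfrak{L}} = \sum_{m\in\Z}T_{i+m}^*V_{i,m}^{i+m}(\operatorname{id}_i\boxtimes T_m),
\]
the factor $(\operatorname{id}_i\boxtimes T_m)(\operatorname{id}_i\boxtimes T_j^*) = \operatorname{id}_i\boxtimes T_mT_j^*$ vanishes unless $m=j$. Hence
\[
\mu_i^{\mathfrak{L}}(\operatorname{id}_i\boxtimes T_j^*) = T_{i+j}^*V^{i+j}_{i,j}.
\]
Strictly, each summand is evaluated on vectors, and the sum converges strongly by Proposition \ref{bddoperator}. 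Since $T_j^*$ maps into the range of $T_j^*T_j$, only one term survives, so there is no real convergence issue. The left-hand side is therefore $T_{i+j}^*V^{i+j}_{i,j}\mathbb{B}_{j,i}$.

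\textbf{Right-hand side.} In the same way,
\[
\mu_i^{\mathfrak{R}}(T_j^*\boxtimes\operatorname{id}_i) = \sum_{m}T^*_{i+m}V^{i+m}_{m,i}(T_mT_j^*\boxtimes\operatorname{id}_i) = T^*_{j+i}V^{j+i}_{j,i}.
\]

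\textbf{Comparison.} Identity (4) of Lemma \ref{associativity and Frobenius}, with $n=i$ and $k=j$, gives $V^{i+j}_{i,j}\mathbb{B}_{j,i} = V^{i+j}_{j,i}$. Substituting this into the left-hand side yields exactly the right-hand side.

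The main point requiring care is the interchange of the composition with the infinite strong sum. Composition on the right with a bounded operator preserves strong convergence, and only the term $m=j$ is nonzero, so the step is harmless. The substantive input is the triviality of the bicharacter $\chi$, which was already established in the proof of Lemma \ref{associativity and Frobenius}.
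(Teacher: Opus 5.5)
Your proposal is correct and essentially identical to the paper's proof. Like the paper, you collapse the sum in $\mu_i^{\mathfrak{L}}$ via $T_kT_j^*=\delta_{k,j}\operatorname{id}_j$, then apply the braiding identity $V^{i+j}_{i,j}\mathbb{B}_{j,i}=V^{i+j}_{j,i}$ from Lemma \ref{associativity and Frobenius} to reach $T^*_{i+j}V^{i+j}_{j,i}=\mu_i^{\mathfrak{R}}(T_j^*\boxtimes\operatorname{id}_i)$; your comment on interchanging composition with the strong sum is a harmless detail the paper leaves implicit.
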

\begin{proof}
    \begin{align*}
        \mu_i^{\mathfrak{L}}(\operatorname{id}_i\boxtimes T_j^*)\mathbb{B}_{j,i} &= \sum_{k\in\Z}T_{i+k}^*V_{i,k}^{i+k}(\operatorname{id}_i\boxtimes T_k) (\operatorname{id}_i\boxtimes T_j^*)\mathbb{B}_{j,i}\\
        &=T_{i+j}^*V^{i+j}_{i,j}\mathbb{B}_{j,i}(T_jT_j^*\boxtimes \operatorname{id}_i)  \\ 
        & = T_{i+j}^*V^{i+j}_{j,i}(T_jT_j^*\boxtimes \operatorname{id}_i) \\
        & = \mu_i^{\mathfrak{R}}(T_j^*\boxtimes\operatorname{id}_i).
    \end{align*}
\end{proof}
 
We have two spaces of bounded operators on $\hilb$:
\begin{align*}
    S^{\mathfrak{L}}(\widetilde{I}) &=\operatorname{span}_\C \left\{  \mu_i^{\mathfrak{L}}L(\xi_i,\widetilde{I})\restriction_{\hilb}: \xi_i\in\hilb_{i}(I), i\in \Z\right\}, \\
    S^{\mathfrak{R}}(\widetilde{I}) &= \operatorname{span}_\C \left\{ \mu_i^{\mathfrak{R}}R(\xi_i,\widetilde{I})\restriction_{\hilb}: \xi_i\in\hilb_{i}(I), i \in\Z\right\}.
\end{align*}
Denote by $\B^{\mathfrak{L}}(\widetilde{I})$ the von Neumann algebra generated by $S^{\mathfrak{L}}(\widetilde{I})$ and denote by $\B^{\mathfrak{R}}(\widetilde{I})$ the von Neumann algebra generated by
$S^{\mathfrak{R}}(\widetilde{I}).$
Summarizing the above results, we obtain the following theorem.
\begin{theorem}\label{simplecurrentextension}
   For any $\widetilde{I} \in \widetilde{\mathcal{I}}$, $\B^{\mathfrak{L}}(\widetilde{I}) = \B^{\mathfrak{R}}(\widetilde{I})$. Denote this common von Neumann algebra by $\B(\widetilde{I})$. 
   As a consequence, 
   the family of von Neumann algebras $\left\{ \B(\widetilde{I})\right\}_{\widetilde{I} \in\widetilde{\mathcal{I}}}$ forms a local M\"{o}bius covariant extension of  $\A$.
   Moreover, for any $\widetilde{I}\in\widetilde{\mathcal{I}}$, 
   the von Neumann algebra $\B(\widetilde{I})$ depends only on $I$ and not on the choice of the arg-function.
   The strongly continuous unitary representation
   of $\widetilde{PSU}(1,1) $ on $\hilb$ factors through $PSU(1,1)$.
\end{theorem}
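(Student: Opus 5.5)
We first check that the tuple $\left(\hilb=\bigoplus_{n\in\Z}\hilb_n,\{\mu_i^{\mathfrak{L}}\},\{\mu_j^{\mathfrak{R}}\},\{T_k\}\right)$, with $T_k$ the canonical projections onto the summands, is a commutative discrete $C^*$-Frobenius algebra in the sense of Definition \ref{definitionCFro}. After that, the first two assertions follow from Theorem \ref{relative local extensions} and Theorem \ref{localextension}.

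The verification goes as follows.
\begin{itemize}
\item Proposition \ref{bddoperator} shows that the maps $\mu_i^{\mathfrak{L}}$ and $\mu_j^{\mathfrak{R}}$ are bounded.
\item They are intertwiners because each summand is, and the sums converge strongly.
\item The relations $T_iT_j^*=\delta_{i,j}\operatorname{id}_i$ and $\sum T_i^*T_i=1$ hold by construction.
\item For the unit identities, fix the normalization $V^{i}_{i,0}=\operatorname{id}_i=V^i_{0,i}$. This is possible because the cocycle solution in Lemma \ref{associativity and Frobenius} can be normalized with the strictness convention $\hilb_i\boxtimes\hilb_0=\hilb_i$. Then $\mu_i^{\mathfrak{L}}(\operatorname{id}_i\boxtimes T_0^*)=T_i^*V^i_{i,0}=T_i^*$, and similarly on the right.
\item Associativity and the Frobenius condition are exactly Lemma \ref{adjointly}. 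The first identity there is commutativity of the square. The second identity is commutativity of the adjoint square, which is equivalent to the required adjoint commutativity.
\item Commutativity is Proposition \ref{localbriading}.
\end{itemize}
So $\B^{\mathfrak{L}}=\B^{\mathfrak{R}}=:\B$ is a local Möbius covariant extension. The positive energy and vacuum axioms also need checking. The vacuum vector is $T_0^*\Omega$, which is cyclic by Theorem \ref{relative local extensions}(b) and invariant because $U_{\hilb}=\bigoplus_n U_n$. Positive energy holds for each $\hilb_n$, since modules in $\operatorname{Rep}^{\operatorname{d}}(\A)$ have positive energy, so it holds for the direct sum.

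Independence of the arg-function and factorization of $U$ through $PSU(1,1)$ are the real content, and I expect the main obstacle here. The plan is to show that the rotation by $2\pi$, $U_{\hilb}(\mathfrak{r}(2\pi))=\bigoplus_n e^{2\pi i L_0}|_{\hilb_n}$, acts as the identity, i.e.\ every conformal spin is an integer. In each simple $\hilb_n$ this operator is the scalar $\vartheta_n=e^{2\pi i h_n}$. The twist is tied to the braiding by the balancing relation $\vartheta_{n+m}=\vartheta_n\vartheta_m\,\mathbb{B}_{m,n}\mathbb{B}_{n,m}$. Gui's categorical extension identifies $e^{2\pi i L_0}$ with the twist; otherwise one can use the monodromy formula for $L(\xi,\widetilde I)$ under a $2\pi$ rotation. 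Lemma \ref{associativity and Frobenius} gives $\chi\equiv1$, so the monodromy is trivial. Hence $n\mapsto\vartheta_n$ is a character of $\Z$. Moreover $\vartheta_1^2=\vartheta_1^{\,2}\cdot\mathbb{B}_{1,1}^2$ is not the relevant constraint; rather, for an invertible object, $\mathbb{B}_{1,1}=\vartheta_1\operatorname{id}$ (self-braiding equals twist up to $d_1=1$). Triviality of $\mathbb{B}_{1,1}$ then gives $\vartheta_1=1$, and so $\vartheta_n=1$ for all $n$. This step requires citing the identification of the self-braiding of an invertible object with its twist, probably via the conformal spin–statistics theorem (Guido–Longo) or \cite{gui2021bisognano}; that is where care is needed.

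Once $U(\mathfrak r(2\pi))=1$, the representation of $\widetilde{PSU}(1,1)$ factors through $PSU(1,1)$, since the kernel of the covering map is generated by $\mathfrak r(2\pi)$. Two lifts $\widetilde I_1,\widetilde I_2$ of $I$ differ by $\widetilde I_2=\mathfrak r(2\pi k)\widetilde I_1$. Möbius covariance, Theorem \ref{relative local extensions}(d), then gives $\B(\widetilde I_2)=U(\mathfrak r(2\pi k))\B(\widetilde I_1)U(\mathfrak r(2\pi k))^*=\B(\widetilde I_1)$, so $\B(\widetilde I)$ depends only on $I$.
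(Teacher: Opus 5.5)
Your proposal is correct and takes essentially the same route as the paper. You verify that the tuple is a commutative discrete $C^*$-Frobenius algebra via Proposition \ref{bddoperator}, Lemma \ref{adjointly} and Proposition \ref{localbriading}, apply Theorem \ref{localextension}, and derive factorization through $PSU(1,1)$, and hence independence of the arg-function, from the triviality of the $2\pi$ rotation on each $\hilb_n$ by the spin--statistics theorem. Your explicit normalization $V^i_{i,0}=V^i_{0,i}=\operatorname{id}_i$ (always achievable, since the associativity identity of Lemma \ref{associativity and Frobenius} with $k=0$ forces these scalars to equal a common phase, which can be rescaled away) and your character argument $\vartheta_n=\vartheta_1^{\,n}=1$ simply spell out steps the paper leaves implicit.
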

\begin{proof}
    The unit condition follows from the natural identifications: $\hilb_i = \hilb_i\boxtimes\hilb_0 = \hilb_0\boxtimes \hilb_i, i\in\Z$. The associativity and Frobenius conditions follow from Lemma \ref{adjointly}. 
    Proposition \ref{localbriading} verifies the commutativity. The conclusion now follows from Theorem \ref{localextension}
    together with the fact that for any $i\in\Z$, 
    the strongly continuous unitary representation $U_i$ of $\widetilde{PSU}(1,1)$ on $\hilb_i$ factors through $PSU(1,1)$,
    or equivalently, the $2\pi$ rotation acts trivially
    by the spin-statistics theorem \cite{guido1996conformal}. 
\end{proof}

\section{Discrete Longo-Rehren constructions}
We retain the notation and setting of the previous section. 
Denote by $\mathbb{M}$ the two-dimensional Minkowski spacetime equipped with metric $dt^2 - dx^2$
and the light-ray coordinates 
$\xi_{\pm} = t \pm x$. 
We have the decomposition 
$\mathbb{M} = \mathcal{L}_+ \times \mathcal{L}_-$, 
where 
$\mathcal{L}_{\pm} = \left\{\xi\in\mathbb{M} :\xi_{\pm} = 0 \right\}$
are the two light ray lines. The $2$-torus 
$S^1 \times S^1$ is a conformal completion of 
$\mathbb{M} = \mathcal{L}_+ \times \mathcal{L}_-$. More explicitly, $\mathbb{M}$
is conformally diffeomorphic to a dense open subregion of $S^1 \times S^1$ and the local action
of $PSL(2,\R) \times PSL(2,\R) $ on $\mathbb{M}$ extends to a global conformal action on $S^1 \times S^1$.

A \textbf{(local) M\"{o}bius  covariant  net $\mathcal{F}$ on 
$S^1 \times S^1$ } is a family of von Neumann algebras
$\left\{\mathcal{F}(I\times J) \right\}_{I, J \in\mathcal{I}}$ on a fixed Hilbert space $\hilb_0$, 
satisfying the following properties:
\begin{enumerate}
    \item (Isotony) If $I_1 \times J_1 \subset I_2 \times J_2$,
    then $\mathcal{F}(I_1 \times J_1) \subset \mathcal{F}(I_2 \times J_2)$. 
    \item (Locality) If $I_1 \times J_1$ and $I_2 \times J_2$ are disjoint in $\mathcal{I} \times \mathcal{I}$, then
        $[\mathcal{F}(I_1 \times J_1),\mathcal{F}(I_2 \times J_2)] = \{ 0\}$. 
    \item (M\"{o}bius covariance)
    There exists a unitary representation 
    $U$ of $PSU(1,1) \times PSU(1,1) $ on $\hilb_0$ such that 
    for any $I, J \in \mathcal{I}$, 
    \[
    U(g) \mathcal{F}(I\times J) U(g)^*
 = \mathcal{F}(g(I \times J)), \ g \in PSU(1,1) \times PSU(1,1).    \]
    \item (Positive energy) The one-parameter unitary subgroup of $U$ corresponding to
    the rotations has positive generator.
    \item (Vacuum) There exists a $U$-invariant 
    vector $\Omega$ (called the vacuum vector)
    and $\Omega$ is cyclic for $\bigvee_{I,J\in\mathcal{I}}\mathcal{F}(I\times J)$. 
\end{enumerate}

An \textbf{$\mathcal{F}$-module}, or a representation of a 
M\"{o}bius covariant net   $\mathcal{F}$ on $S^1 \times S^1$, 
consists of a Hilbert space $\hilb$ and a family of $*$-representations:
$\pi_{\hilb,I\times J}: \mathcal{F}(I\times J) \to B(\hilb)$, for any $I\times J \in \mathcal{I \times I}$
such that if 
$I\times J \subset K\times L $ in $\mathcal{I\times I}$,
then  $\pi_{\hilb,K\times L}\restriction_{ \mathcal{F}(I\times J)} =  \pi_{\hilb,I\times J}$. We will denote 
$\pi_{\hilb,I\times J}$  by  $\pi_{I\times J}$ whenever
no ambiguity occurs.  Let $\hilb_i$ and $\hilb_j$ be 
$\mathcal{F}$-modules. An intertwiner between $\hilb_i$ and $\hilb_j$ is a bounded operator $T: \hilb_i \to \hilb_j$ such that 
$T\pi_{\hilb_i, I \times J}(a) = \pi_{\hilb_j, I \times J}(a)T$ for any
$a \in \mathcal{F}(I \times J)$, $I,J \in \mathcal{I}$.   We denote the space of intertwiners between $\hilb_i$ and $\hilb_j$ by
$\operatorname{Hom}_{\mathcal{F}}(\hilb_i, \hilb_j)$.

 Denote by $\mathfrak{r}$ the reflection on $S^1$, sending 
 $z \in S^1 $ to $z^{-1} = \overline{z} \in S^1$. 
$\mathfrak{r}\widetilde{I} = ( \mathfrak{r} I, \arg_{\mathfrak{r}I})$, where $\arg_{\mathfrak{r}I}(z) = -\arg_I(\overline{z})$. 
Denote the upper semicircle by $S^1_+$. Define $\widetilde{S^1_+}$ such that $\arg_{\widetilde{S^1_+}}$ takes values in $(0,\pi)$. Define $\widetilde{S^1_-}$ by $\mathfrak{r}\widetilde{S^1_+}$. 
 Let $\Theta = \mathfrak{J}_{\widetilde{S^1_+}}$ be the PCT operator of $\mathfrak{E}^{\operatorname{d}}$, which is an 
 anti-unitary operator from each dualized $\A$-module $\hilb_i$ to its dual object $\hilb_{\overline{i}}$. 
 Denote by $\Theta_i$ the action of $\Theta$ on $\hilb_i \in \operatorname{Obj}(\operatorname{Rep}(\A))$. 
 See \cite[Section 6]{gui2021bisognano} for more details.
 We will frequently use the following result from \cite[Theorem 6.4]{gui2021bisognano}:
 \begin{proposition}[PCT theorem]
     For any $\widetilde{I}\in\widetilde{\mathcal{I}}, $ the following identities hold when acting on any object of $\operatorname{Rep}^{\operatorname{d}}(\A):$
     \[
     \Theta \cdot g \cdot \Theta  = \mathfrak{r}\cdot g \cdot \mathfrak{r}, \ \forall g\in \widetilde{PSU}(1,1). 
     \]
     In addition, for any $\hilb_i \in \operatorname{Obj}\left(\operatorname{Rep}^{\operatorname{d}}(\A)\right)$ and $\xi \in \hilb_i(I), $
     we have: 
     \begin{align*}
         \Theta_i \cdot \hilb_i(I) &=  \hilb_{\overline{i}}(\mathfrak{r}I), \\
         \Theta \cdot L(\xi, \widetilde{I}) \cdot \Theta &=  R(\Theta_i\xi, \mathfrak{r}\widetilde{I}). 
     \end{align*}
 \end{proposition}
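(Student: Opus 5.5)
The plan is to recover the PCT operator $\Theta=\mathfrak{J}_{\widetilde{S^1_+}}$ from Tomita--Takesaki theory adapted to the categorical extension $\mathfrak{E}^{\operatorname{d}}$, following the strategy of \cite[Sections 5--6]{gui2021bisognano}.

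\textbf{Step 1: a Tomita operator between dual objects.} For a dualized $\hilb_i$ and $\widetilde{I}=\widetilde{S^1_+}$, I would introduce a conjugate-linear operator
\[
S_i:\hilb_i\to\hilb_{\overline{i}}, \qquad S_i\xi = L(\xi,\widetilde{I})^*\,\operatorname{coev}_{i,\overline{i}}\Omega, \qquad \xi\in\hilb_i(I),
\]
or the analogous expression with $R$, whichever matches the conjugate equations. The choice of $\operatorname{coev}$ is what makes $S_i$ land in $\hilb_{\overline{i}}$. Using the conjugate equations and the functoriality of $\mathfrak{E}^{\operatorname{d}}$, I would show:
\begin{itemize}
\item $S_i$ is closable and densely defined;
\item its adjoint is given by the analogous formula on $\hilb_{\overline{i}}(I')$;
\item $S_{\overline{i}}S_i\subset \operatorname{id}$.
\end{itemize}
For the vacuum module this reduces to the usual Tomita operator $S$ of $(\A(I),\Omega)$. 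Let $S_i=\Theta_i\Delta_i^{1/2}$ be the polar decomposition, so that $\Theta_i:\hilb_i\to\hilb_{\overline{i}}$ is antiunitary.

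\textbf{Step 2: Bisognano--Wichmann for the modules.} This is the main obstacle. One needs $\Delta_i^{it}=U_i(\delta(-2\pi t))$, where $\delta$ denotes the dilation subgroup fixing $\widetilde{S^1_+}$. My first attempt would be to use M\"obius covariance of $\mathfrak{E}^{\operatorname{d}}$ to show that $U(\delta(s))$ commutes with $S_i$ up to the rescaling dictated by the KMS condition. Concretely, I would establish an analytic continuation of $s\mapsto L(\delta(s)\xi,\widetilde{I})$ to the strip, using positive energy together with the fact that the braiding and the evaluations intertwine $\widetilde{PSU}(1,1)$. A uniqueness argument for modular groups, in the form of Borchers' theorem on half-sided modular inclusions, would then identify $\Delta_i^{it}$. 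I expect dualizability to be essential here: finiteness of $d_i$ is what guarantees that $\operatorname{coev}\Omega$ lies in the domain.

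\textbf{Step 3: geometric covariance of $\Theta$.} With Step 2 in hand, $\Theta$ commutes with the dilations. Moreover, $\Theta$ maps the modular data of $\widetilde{S^1_+}$ to those of $\widetilde{S^1_-}=\mathfrak{r}\widetilde{S^1_+}$, which yields the conjugation relation on the translations fixing the endpoints. Since these one-parameter subgroups generate $\widetilde{PSU}(1,1)$, this gives $\Theta g\Theta=\mathfrak{r}g\mathfrak{r}$.

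\textbf{Step 4: action on fields and bounded vectors.} For the field relation, I would first prove $\Theta L(\xi,\widetilde{I})\Theta=R(\Theta_i\xi,\mathfrak{r}\widetilde{I})$ for $\widetilde{I}=\widetilde{S^1_+}$. The tool is the modular relation $JMJ=M'$, transported through the braiding and locality axiom E: both sides commute adjointly with all $L(\eta,\widetilde{S^1_+})$, both are functorial, and both agree on $\Omega$ by the definition of $S_i$. Density (axiom D) and Reeh--Schlieder uniqueness then force equality. This also shows $\Theta_i\hilb_i(I)\subset\hilb_{\overline{i}}(\mathfrak{r}I)$, with equality obtained by applying the same argument to $\overline{i}$. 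Finally, the case of general $\widetilde{I}$ follows by transporting $\widetilde{S^1_+}$ with $g\in\widetilde{PSU}(1,1)$, using Step 3 and the M\"obius covariance of $\mathfrak{E}^{\operatorname{d}}$.
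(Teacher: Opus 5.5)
The paper does not prove this statement; it quotes it from \cite[Theorem 6.4]{gui2021bisognano}, so your outline has to stand on its own. Your Step 1 is sound. $S_i\xi=L(\xi,\widetilde I)^*\operatorname{coev}_{i,\overline i}\Omega$ lies in $\hilb_{\overline i}(I)$. Locality gives $\langle S_i\xi,\eta\rangle=\langle R(\eta,\widetilde I')^*\operatorname{coev}_{i,\overline i}\Omega,\xi\rangle$ for $\eta\in\hilb_{\overline i}(I')$, so $S_i$ is closable. The conjugate equations give $S_{\overline i}S_i=\operatorname{id}$ on $\hilb_i(I)$. The transport from $\widetilde{S^1_+}$ to $g\widetilde{S^1_+}$ at the end of Step 4 is also correct, once Step 3 is known.

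The gap is Step 2, on which everything else rests. Borchers' theorem (it concerns half-sided translations; half-sided modular inclusions are Wiesbrock's converse) and Tomita's theorem are statements about a von Neumann algebra with a cyclic and separating vector. $S_i$, however, is a conjugate-linear map between two different modules. As it stands it is not the Tomita operator of any algebra with respect to any vector, and you never exhibit one.

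Your fallback, continuing $s\mapsto L(\delta(s)\xi,\widetilde I)$ analytically ``using positive energy'', does not work. The dilation generator is not positive: conjugation by the rotation by $\pi$ sends $\delta(s)$ to $\delta(-s)$, so its spectrum is symmetric. Moreover, analyticity of $s\mapsto\delta(s)\xi$ in a strip with boundary value $\Theta_i^{-1}S_i\xi$ is essentially the Bisognano--Wichmann statement itself.

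Even where Borchers applies, it only shows that $z(t)=\Delta^{it}U(\delta(2\pi t))$ commutes with the M\"{o}bius group. Proving $z(t)=1$ is a separate step. The vacuum argument for it compares $I$ with $I'$; in the categorical setting this becomes a comparison of $R$ with $L$ on $\widetilde I'$ (your own adjoint formula shows this), so it brings in the braiding and the $2\pi$-rotation.

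The same missing input undermines Steps 3 and 4. The modular data of $\widetilde{S^1_+}$ and $\widetilde{S^1_-}$ involve only the single dilation subgroup fixing $\pm1$. They therefore cannot yield $\Theta\tau(a)\Theta=\tau(-a)$ for the translation subgroups; for that you need Borchers' relation $JT(a)J=T(-a)$ or the modular data of other intervals.

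In Step 4, the assertion that $\Theta L(\xi,\widetilde{S^1_+})\Theta$ commutes adjointly with every $L(\eta,\widetilde{S^1_+})$ is just $JMJ\subset M'$ restated, not a consequence of anything you have established. Also, the right-hand side $R(\Theta_i\xi,\mathfrak r\widetilde I)$ only makes sense once $\Theta_i\xi\in\hilb_{\overline i}(\mathfrak rI)$ is known, and you derive that afterwards from the same identity.

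One way to supply the missing structure is a categorical version of Connes' $2\times 2$ matrix trick. Take the non-local extension attached to the standard $C^*$-Frobenius algebra $X\boxtimes\overline X$ with $X=\hilb_0\oplus\hilb_i$. Its vacuum is separating for $\B^{\mathfrak L}(\widetilde I)$ by the relative-locality argument of Proposition~\ref{commutant}. Its Tomita operator maps the summand $\hilb_i\boxtimes\hilb_0$ into the summand $\hilb_0\boxtimes\hilb_{\overline i}$ by a multiple of $S_i$. Tomita--Takesaki theory and Borchers' theorem then genuinely apply, but you would still have to prove $z(t)=1$.
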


By \cite[Theorem 6.4]{gui2021bisognano}, we have:
\begin{proposition}
    Let $\hilb_i , \hilb_j\in \operatorname{Obj}(\operatorname{Rep}^{\operatorname{d}}(\A))$. For any $\widetilde{I}\in\widetilde{\mathcal{I}}, \ \xi\in\hilb_i(I),\  \eta\in\hilb_j, $ we have
    \begin{align*}
            \Theta_{i\boxtimes j}L(\xi, \widetilde{I})\eta &= R(\Theta_i\xi, \mathfrak{r}\widetilde{I})\Theta_j\eta, \\
            \Theta_{j\boxtimes i}R(\xi, \widetilde{I})\eta & = L(\Theta_i\xi, \mathfrak{r}\widetilde{I})\Theta_j\eta.
    \end{align*}
    
\end{proposition}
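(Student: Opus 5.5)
The plan is to derive the two identities from the PCT theorem above, namely
\[
\Theta\cdot L(\xi,\widetilde{I})\cdot\Theta = R(\Theta_i\xi,\mathfrak{r}\widetilde{I}),
\]
together with the fact that $\Theta$ is an antiunitary involution, so that $\Theta^{-1}=\Theta$ in the appropriate sense.

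First I would fix precisely how $\Theta$ acts on the relevant modules. On $\hilb_j$ it acts as $\Theta_j:\hilb_j\to\hilb_{\overline{j}}$, and on the fusion product $\hilb_i\boxtimes\hilb_j$ it acts as $\Theta_{i\boxtimes j}:\hilb_i\boxtimes\hilb_j\to\overline{\hilb_i\boxtimes\hilb_j}$. Using the standard identification $\overline{\hilb_i\boxtimes\hilb_j}\cong\hilb_{\overline{j}}\boxtimes\hilb_{\overline{i}}$ from \cite[Section 6]{gui2021bisognano}, the PCT identity, read as an operator equation from $\hilb_{\overline{j}}$ to $\hilb_{\overline{j}}\boxtimes\hilb_{\overline{i}}$, says
\[
\Theta_{i\boxtimes j}\,L(\xi,\widetilde{I})\,\Theta_{\overline{j}} = R(\Theta_i\xi,\mathfrak{r}\widetilde{I}).
\]
Here the inner $\Theta$ is $\Theta_{\overline{j}}:\hilb_{\overline{j}}\to\hilb_j$, since the right-hand side acts on $\hilb_{\overline{j}}$.

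Next I would use the involutivity $\Theta_{\overline{j}}\Theta_j=\operatorname{id}_j$, which is the statement $\Theta^2=1$ for the PCT operator $\mathfrak{J}_{\widetilde{S^1_+}}$. Applying the displayed identity to the vector $\Theta_j\eta$ then gives
\[
\Theta_{i\boxtimes j}L(\xi,\widetilde{I})\eta = R(\Theta_i\xi,\mathfrak{r}\widetilde{I})\Theta_j\eta,
\]
which is the first claimed identity.

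For the second identity I would apply the same PCT statement with the roles of $L$ and $R$ exchanged. Gui's theorem is symmetric under this exchange: the relation $\Theta R(\xi,\widetilde{I})\Theta=L(\Theta_i\xi,\mathfrak{r}\widetilde{I})$ follows from the $L$ version by conjugating with $\Theta$ once more, combined with $\Theta_i^2=\operatorname{id}$ and $\mathfrak{r}^2=1$. Alternatively, one can obtain it through the braiding, since $\mathbb{B}L=R$ and $\Theta$ conjugates $\mathbb{B}$ to the inverse braiding. Applying the resulting identity to $\Theta_j\eta$ in the same way gives the second formula.

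I expect the main obstacle to be bookkeeping rather than analysis: one has to make sure that the identification of $\overline{\hilb_i\boxtimes\hilb_j}$ with $\hilb_{\overline{j}}\boxtimes\hilb_{\overline{i}}$ is the one under which Gui defines $\Theta_{i\boxtimes j}$, and that $\Theta^2=1$ holds on dualized objects with no extra twist or phase. If only $\Theta^2=\vartheta$ held for some twist $\vartheta$, the formulas would pick up a factor, so I would check this against \cite[Theorem 6.4]{gui2021bisognano} first.
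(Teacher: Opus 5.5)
Your proposal is correct and matches the paper, which gives no separate argument but cites \cite[Theorem 6.4]{gui2021bisognano}: the statement is the vector form of the operator identity $\Theta L(\xi,\widetilde I)\Theta=R(\Theta_i\xi,\mathfrak r\widetilde I)$, obtained as you do by evaluating on $\Theta_j\eta$ and using $\Theta_{\overline j}\Theta_j=\operatorname{id}_j$ (the standard involutivity $\mathfrak J_{\widetilde I}^{-1}=\mathfrak J_{\widetilde I}$ of Gui's modular conjugations, which the paper also uses implicitly). Your derivation of the $R$-version is also fine: substitute $\Theta_i\xi\in\hilb_{\overline i}(\mathfrak r I)$ and $\mathfrak r\widetilde I$ into the $L$-version, then use $\mathfrak r^2=1$.
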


The unitary braiding is given by:
\[
\mathbb{B}_{\Theta_i\hilb_i, \Theta_j\hilb_j } = \Theta_{i\boxtimes j} \circ \mathbb{B}_{i,j}^{-1} \circ \Theta_{j\boxtimes i}^{-1}\in \operatorname{Hom}(\Theta_i\hilb_i\boxtimes \Theta_j\hilb_j, \Theta_j \hilb_j \boxtimes \Theta_i\hilb_i).
\]

\begin{proposition}\label{braidexchange}
    For $\widetilde{I}\in\widetilde{\mathcal{I}}, \ \xi_i\in\hilb_i(I), \eta_j \in \hilb_j, $ we have: 
    \[
    \mathbb{B}_{\Theta_i\hilb_i, \Theta_j\hilb_j } L(\Theta_i\xi, \mathfrak{r}\widetilde{I}) \Theta_j\eta = 
    R(\Theta_i\xi, \mathfrak{r}\widetilde{I})\Theta_j\eta.
    \]
\end{proposition}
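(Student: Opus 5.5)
We verify the identity by unwinding the definition of $\mathbb{B}_{\Theta_i\hilb_i,\Theta_j\hilb_j}$ and then using the preceding proposition (from \cite[Theorem 6.4]{gui2021bisognano}) twice, with the unitary braiding axiom (F) of the categorical extension in between.

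First, apply the preceding proposition to the right-handed field $R(\xi,\widetilde{I})$ acting on $\eta\in\hilb_j$:
\[
\Theta_{j\boxtimes i}R(\xi,\widetilde{I})\eta = L(\Theta_i\xi,\mathfrak{r}\widetilde{I})\Theta_j\eta .
\]
Hence
\[
\Theta_{j\boxtimes i}^{-1}L(\Theta_i\xi,\mathfrak{r}\widetilde{I})\Theta_j\eta = R(\xi,\widetilde{I})\eta .
\]
This converts the left-hand side of the claim into
\[
\Theta_{i\boxtimes j}\,\mathbb{B}_{i,j}^{-1}\,R(\xi,\widetilde{I})\eta .
\]

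Next, the braiding axiom (F) gives $\mathbb{B}_{i,j}L(\xi,\widetilde{I})\eta = R(\xi,\widetilde{I})\eta$, where $L(\xi,\widetilde{I})\eta\in\hilb_i\boxtimes\hilb_j$ and $R(\xi,\widetilde{I})\eta\in\hilb_j\boxtimes\hilb_i$. Since $\mathbb{B}_{i,j}$ is unitary, $\mathbb{B}_{i,j}^{-1}R(\xi,\widetilde{I})\eta = L(\xi,\widetilde{I})\eta$. Applying the first identity of the preceding proposition,
\[
\Theta_{i\boxtimes j}L(\xi,\widetilde{I})\eta = R(\Theta_i\xi,\mathfrak{r}\widetilde{I})\Theta_j\eta ,
\]
which is the right-hand side of the claim.

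The only real obstacle is bookkeeping: we must check that the object on which each operator acts matches the domain it is assigned in the definition of the braiding. Specifically, $\Theta_{j\boxtimes i}^{-1}$ maps $\Theta(\hilb_j\boxtimes\hilb_i)$, identified with $\Theta_i\hilb_i\boxtimes\Theta_j\hilb_j$, back to $\hilb_j\boxtimes\hilb_i$, and $\Theta_{i\boxtimes j}$ maps $\hilb_i\boxtimes\hilb_j$ to $\Theta_j\hilb_j\boxtimes\Theta_i\hilb_i$. This is consistent with the ordering reversal $L\leftrightarrow R$ under $\Theta$ in the PCT theorem, so the argument requires no density or limiting step: the identity holds vector by vector.
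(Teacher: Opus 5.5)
Your proposal is correct and follows essentially the same route as the paper. The paper likewise unwinds $\mathbb{B}_{\Theta_i\hilb_i,\Theta_j\hilb_j}=\Theta_{i\boxtimes j}\circ\mathbb{B}_{i,j}^{-1}\circ\Theta_{j\boxtimes i}^{-1}$, uses the second identity of the preceding proposition to pull $L(\Theta_i\xi,\mathfrak{r}\widetilde{I})\Theta_j\eta$ back to $R(\xi,\widetilde{I})\eta$, applies the braiding axiom to get $L(\xi,\widetilde{I})\eta$, and then uses the first identity to reach $R(\Theta_i\xi,\mathfrak{r}\widetilde{I})\Theta_j\eta$; your explicit domain bookkeeping only spells out what the paper leaves implicit.
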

\begin{proof}
    \begin{align*}
         \mathbb{B}_{\Theta_i\hilb_i, \Theta_j\hilb_j } L(\Theta_i\xi, \mathfrak{r}\widetilde{I}) \Theta_j\eta & = \Theta_{i\boxtimes j}\circ \mathbb{B}_{i,j}^{-1}\cdot R(\xi_i,\widetilde{I})\eta_j \\
         &= \Theta_{i\boxtimes j} L(\xi_i,\widetilde{I})\eta_j\\
         &= R(\Theta_i\xi, \mathfrak{r}\widetilde{I})\Theta_j\eta.
    \end{align*}
\end{proof}

Denote the local M\"{o}bius covariant net $\left\{\A(\mathfrak{r}I)\right\}_{I\in\mathcal{I}}$ by $\A^{\mathfrak{r}}$. The M\"{o}bius covariance is implemented by the strongly continuous unitary representation $U^\mathfrak{r}(g) = \mathfrak{r} U(g) \mathfrak{r} = \Theta U(g) \Theta $, for any $g\in{PSU}(1,1)$. 
It follows that $\left\{\A(I) \otimes \A^\mathfrak{r}(J) \right\}_{I,J\in\mathcal{I}}$ forms a local 
M\"{o}bius covariant net on $S^1 \times S^1$. 
For any $n\in\Z$, $\Theta_n\hilb_n$ can be naturally equipped 
with an $\A^\mathfrak{r}$-module structure $\pi^{\mathfrak{r}}_{n, I}: \A^\mathfrak{r}(I) \to B(\Theta_n\hilb_n) $ by:
\[
\pi^{\mathfrak{r}}_{n, I}(x) = \Theta_n \cdot \pi_{n,\mathfrak{r}I}(x) \cdot \Theta^{-1}_n, 
\]
for $I \in \mathcal{I}, \ x\in \A(\mathfrak{r}I) $. Moreover, 
$\hilb_n \otimes \Theta_n \hilb_n$ together with 
$\pi_{n, I}\otimes \pi_{n, J}^\mathfrak{r}$ for any $I, J \in \mathcal{I}$, 
form an $\A \otimes \A^\mathfrak{r}$-module.

Define $\hilb_{\bullet} = \bigoplus_{n\in\Z} \hilb_n\otimes \Theta_n\cdot\hilb_n$.
$\hilb_\bullet$ is naturally equipped with an $\A \otimes \A^\mathfrak{r}$-module structure,
implemented by: 
\[
        \pi_{\hilb_\bullet, I\times J}(x\otimes y ) =  \sum_{j\in\Z}\widetilde{T_{j}}^* (\pi_{j,I}(x) \otimes \pi_{j,J}^{\mathfrak{r}}(y))\widetilde{T_{j}},
\]
for any $I , J \in \mathcal{I}$ and $x\in\A(I)$, $y \in\A^\mathfrak{r}(J)$. 
Choose $\widetilde{T_k}\in \operatorname{Hom}_{\A\otimes\A^\mathfrak{r}}(\hilb_{\bullet}, \hilb_k\otimes\Theta_k\hilb_k)$, $k\in\Z$, such that for any $i,j\in\Z$, 
$\widetilde{T_i}\widetilde{T_j}^* = \delta_{i,j}(\operatorname{id}_i\otimes \operatorname{id}_{\overline{i}})$, $\widetilde{T_i}^*\widetilde{T_i}$ is a projection and $\sum_{i\in\Z}\widetilde{T_i}^*\widetilde{T_i} = \operatorname{id}_{\hilb_\bullet}$ with respect to the strong operator topology.

\begin{lemma}
    For any $i\in\Z,$ $\mu_i^{\mathfrak{L}}$ and $\mu_i^{\mathfrak{R}}$, defined below,  are bounded operators on $\hilb_\bullet$.
\end{lemma}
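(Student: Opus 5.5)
The maps $\mu_i^{\mathfrak{L}}$ and $\mu_i^{\mathfrak{R}}$ on $\hilb_\bullet$ are not written out yet, so I take the natural guess from the construction in the previous section. The fusion is that of $\A\otimes\A^{\mathfrak{r}}$. The left multiplication by the $i$-th summand $\hilb_i\otimes\Theta_i\hilb_i$ should be
\[
\mu_i^{\mathfrak{L}}=\sum_{j\in\Z}\widetilde{T_{i+j}}^*\bigl(V^{i+j}_{i,j}\otimes W^{i+j}_{i,j}\bigr)\bigl(\operatorname{id}\boxtimes\widetilde{T_j}\bigr),
\]
where $W^{i+j}_{i,j}\in\operatorname{Hom}(\Theta_i\hilb_i\boxtimes\Theta_j\hilb_j,\Theta_{i+j}\hilb_{i+j})$ is unitary. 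A natural choice is $W^{i+j}_{i,j}=\Theta_{i+j}V^{i+j}_{i,j}\Theta_{i\boxtimes j}^{-1}$, or its variant through the braiding. The right multiplication $\mu_i^{\mathfrak{R}}$ is defined analogously, summing over $m$ with $\widetilde{T_{m+i}}^*(V^{m+i}_{m,i}\otimes W^{m+i}_{m,i})(\widetilde{T_m}\boxtimes\operatorname{id})$. The only structural facts I need are these. First, the fusion of the tensor-product modules splits as $(\hilb_i\otimes\Theta_i\hilb_i)\boxtimes(\hilb_j\otimes\Theta_j\hilb_j)\cong(\hilb_i\boxtimes\hilb_j)\otimes(\Theta_i\hilb_i\boxtimes\Theta_j\hilb_j)$. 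Second, each block $V^{i+j}_{i,j}\otimes W^{i+j}_{i,j}$ is unitary, being a tensor product of unitaries; $W$ is unitary because $\Theta$ is antiunitary and $V$ is unitary.

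With this in place I follow the proof of Proposition \ref{bddoperator}. Fix a vector $\zeta$ in the domain $(\hilb_i\otimes\Theta_i\hilb_i)\boxtimes\hilb_\bullet$. By functoriality of fusion, the family $\{\operatorname{id}\boxtimes\widetilde{T_j}^*\widetilde{T_j}\}_j$ consists of mutually orthogonal projections summing strongly to the identity. I first work with finite partial sums $\sum_{j\in F}$. Expanding the squared norm as a double sum over $j_1,j_2\in F$, the cross terms vanish because $\widetilde{T_{i+j_2}}\widetilde{T_{i+j_1}}^*=\delta_{j_1,j_2}$; here I use that $j\mapsto i+j$ is injective. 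The diagonal terms reduce, by unitarity of the blocks, to $\|(\operatorname{id}\boxtimes\widetilde{T_j})\zeta\|^2$. Summing over $j\in F$ gives $\|\sum_{j\in F}(\operatorname{id}\boxtimes\widetilde{T_j}^*\widetilde{T_j})\zeta\|^2\le\|\zeta\|^2$.

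This shows the partial sums form a Cauchy net in the strong topology, with uniformly bounded norms. Hence the series defining $\mu_i^{\mathfrak{L}}$ converges strongly to an isometry, which is in particular bounded. The argument for $\mu_i^{\mathfrak{R}}$ is identical, using the projections $\widetilde{T_m}^*\widetilde{T_m}\boxtimes\operatorname{id}$ and the injectivity of $m\mapsto m+i$. The intertwining property with $\A\otimes\A^{\mathfrak{r}}$ passes to the strong limit because each summand is an intertwiner.

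The main obstacle is the first step. I must justify that Connes fusion for $\A\otimes\A^{\mathfrak{r}}$ on these product modules factorizes as a tensor product of the two chiral fusions. I must also confirm that $\Theta_j\hilb_j$ fuse in $\operatorname{Rep}(\A^{\mathfrak{r}})$ with unitary intertwiners $W$ into $\Theta_{i+j}\hilb_{i+j}$. I would handle this by transporting the categorical extension through $\Theta$, using the PCT theorem and Proposition \ref{braidexchange}. Once this factorization is accepted, boundedness is pure orthogonality bookkeeping.
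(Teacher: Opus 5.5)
Your proposal is correct and follows the paper's route. The paper's proof just says boundedness follows from the same orthogonality computation as in Proposition \ref{bddoperator}; you carry out that computation, with more care about strong convergence of the partial sums. The factorization of fusion for $\A\otimes\A^{\mathfrak{r}}$ that you flag as the main obstacle is not treated in the paper either: it assumes it without comment when it declares $\widetilde{V^{i+j}_{i,j}}$ to be a unitary intertwiner out of the fused product modules.
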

\begin{proof}
    This follows from the same direct computation as in  Proposition \ref{bddoperator}.  
\end{proof}

In  Lemma \ref{associativity and Frobenius},
we have fixed unitary intertwiners $V_{i,j}^{i+j}\in \operatorname{Hom}_{\A}(\hilb_i \boxtimes \hilb_j, \hilb_{i+j})$  for all $i,j\in\Z$. 
For any $i, j\in\Z$, define 
\begin{align*}
  \widetilde{V^{i+j}_{i,j}} &= V^{i+j}_{i,j}\otimes \overline{V^{i+j}_{i,j}}\\
  &= V^{i+j}_{i,j} \otimes \Theta_{i+j} \cdot V^{i+j}_{i,j} \cdot \Theta^{-1}_{i\boxtimes j},
\end{align*}
 which is a unitary intertwiner in 
$\operatorname{Hom}_{\A\otimes\A^\mathfrak{r}}\left((\hilb_i\otimes\Theta_i\hilb_i)\boxtimes (\hilb_j\otimes\Theta_j\hilb_j), \hilb_{i+j}\otimes\Theta_{i+j}\hilb_{i+j}\right)$.

For $\widetilde{I}, \widetilde{J} \in \widetilde{\mathcal{I}}$, two spaces of bounded operators on $\hilb_\bullet$ are defined as:
\begin{align*}
    S^{\mathfrak{L}}(\widetilde{I}\times\widetilde{J}) = \operatorname{span}_\C \bigg\{ \mu_i^{\mathfrak{L}} &\sum_{i_k=0}^{N(i)} \left(L(\xi_i^{(i_k)},\widetilde{I})\otimes \Theta L(\eta_i^{(i_k)},\mathfrak{r}\widetilde{J})\Theta^{-1}\right)\restriction_{\hilb_{\bullet}}: \\
    \  &\xi_i^{(i_k)}\in\hilb_i(I), \eta_i^{(i_k)}\in\hilb_i(\mathfrak{r}J), \ i\in\Z, 
     \ N(i)\in\Z_{\geq 0},0\leq i_k\leq N(i) \bigg\}, \\
  S^{\mathfrak{R}}(\widetilde{I}\times\widetilde{J}) = \operatorname{span}_\C \bigg\{ \mu_i^{\mathfrak{R}} &\sum_{i_k=0}^{N(i)} \left(R(\xi_i^{(i_k)},\widetilde{I})\otimes \Theta R(\eta_i^{(i_k)},\mathfrak{r}\widetilde{J})\Theta^{-1}\right)\restriction_{\hilb_{\bullet}}: \\
    \  &\xi_i^{(i_k)}\in\hilb_i(I), \eta_i^{(i_k)}\in\hilb_i(\mathfrak{r}J), \ i\in\Z, 
     \ N(i)\in\Z_{\geq 0},0\leq i_k\leq N(i) \bigg\},
\end{align*}
where 
\begin{align*}
    \mu_i^{\mathfrak{L}} &= \sum_{j\in\Z}\widetilde{T_{i+j}}^*\widetilde{V^{i+j}_{i,j}}(\operatorname{id}_{\hilb_i\otimes \Theta_i\hilb_i}\boxtimes \widetilde{T_j}) \\
    \mu_i^{\mathfrak{R}} &= \sum_{j\in\Z}\widetilde{T_{i+j}}^*\widetilde{V^{i+j}_{j,i}}(\widetilde{T_j}\boxtimes\operatorname{id}_{\hilb_i\otimes \Theta_i\hilb_i})
\end{align*}
and  $i \in\Z.$
Denote by $\B^{\mathfrak{L}}(\widetilde{I}\times \widetilde{J})$ the von Neumann algebra generated by $S^{\mathfrak{L}}(\widetilde{I}\times\widetilde{J})$ and denote by $\B^{\mathfrak{R}}(\widetilde{I}\times\widetilde{J})$ the von Neumann algebra generated by
$S^{\mathfrak{R}}(\widetilde{I}\times \widetilde{J}).$

\begin{proposition}\label{extension}
For any $\widetilde{I}, \widetilde{J} \in\widetilde{\mathcal{I}}$, we have 
    $ \pi_{\hilb_\bullet, I \times J}\left(\A(I)\otimes\A^\mathfrak{r}(J)\right) \subset  \B^{\mathfrak{L}}(\widetilde{I}\times\widetilde{J}) \cap \B^{\mathfrak{R}}(\widetilde{I}\times\widetilde{J})  $. 
\end{proposition}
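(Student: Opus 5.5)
We follow the proof of Proposition \ref{inclusion}: we show that the generators $\pi_{\hilb_\bullet,I\times J}(x\otimes y)$, with $x\in\A(I)$ and $y\in\A^\mathfrak{r}(J)$, already lie in $S^{\mathfrak{L}}(\widetilde{I}\times\widetilde{J})$ and $S^{\mathfrak{R}}(\widetilde{I}\times\widetilde{J})$. Since the algebraic tensor product $\A(I)\odot\A^\mathfrak{r}(J)$ is weakly dense in $\A(I)\otimes\A^\mathfrak{r}(J)$ and $\pi_{\hilb_\bullet,I\times J}$ is normal, this gives the claim. We only treat $\B^{\mathfrak{L}}$; the $\B^{\mathfrak{R}}$ case is symmetric, with $R$ replacing $L$ and the right-unit computation replacing the left-unit one.

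\textbf{Step 1 (the unit morphism).} We take $i=0$, $N(0)=0$, $\xi_0^{(0)}=x\Omega\in\hilb_0(I)$ and $\eta_0^{(0)}=\Theta y\Theta^{-1}\Omega$. We must check that $\eta_0^{(0)}\in\hilb_0(\mathfrak{r}J)$. Viewing $y\in\A(\mathfrak{r}J)$ and using that $\Theta$ acts on $\hilb_0$ as the modular conjugation of $\widetilde{S^1_+}$, we get $\Theta\A(\mathfrak{r}J)\Theta=\A(J)$ by Bisognano--Wichmann and the PCT theorem. Actually the correct bookkeeping is that the choice of $\eta$ should make
\[
\Theta L(\eta_0^{(0)},\mathfrak{r}\widetilde{J})\Theta^{-1}
\]
act on each $\Theta_n\hilb_n$ as $\pi^\mathfrak{r}_{n,J}(y)=\Theta_n\pi_{n,\mathfrak{r}J}(y)\Theta_n^{-1}$. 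For this it suffices that $L(\eta_0^{(0)},\mathfrak{r}\widetilde J)=\pi_{\mathfrak{r}J}(y)$ on each $\hilb_n$. Hence we simply take $\eta_0^{(0)}=y\Omega\in\hilb_0(\mathfrak{r}J)$, and we use the standard fact, already used in Proposition \ref{inclusion}, that $L(a\Omega,\widetilde K)=\pi_K(a)$ for $a\in\A(K)$. With this choice,
\[
L(x\Omega,\widetilde I)\otimes\Theta L(y\Omega,\mathfrak{r}\widetilde J)\Theta^{-1}
\]
restricted to $\hilb_n\otimes\Theta_n\hilb_n$ equals $\pi_{n,I}(x)\otimes\pi^\mathfrak{r}_{n,J}(y)$.

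\textbf{Step 2 (unit identity for $\mu_0^{\mathfrak{L}}$).} We verify $\mu_0^{\mathfrak{L}}(\operatorname{id}\boxtimes\widetilde{T_j}^*)=\widetilde{T_j}^*$. Since $\widetilde{V^{j}_{0,j}}=V^j_{0,j}\otimes\overline{V^j_{0,j}}$, this reduces to $V^j_{0,j}=\operatorname{id}_j$ under the strict identification $\hilb_0\boxtimes\hilb_j=\hilb_j$. We may normalize the $V$'s this way: associativity in Lemma \ref{associativity and Frobenius} forces $V^j_{0,j}$ to be a scalar independent of $j$, and it can be rescaled to $1$. The conjugate of the identity is again the identity. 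The computation of Proposition \ref{inclusion} then applies verbatim on a dense set of vectors $\zeta$ with finite support with respect to $\{\widetilde{T_j}\}$:
\[
\mu_0^{\mathfrak{L}}\bigl(L(x\Omega,\widetilde I)\otimes\Theta L(y\Omega,\mathfrak{r}\widetilde J)\Theta^{-1}\bigr)\zeta=\sum_j\widetilde{T_j}^*\bigl(\pi_{j,I}(x)\otimes\pi^\mathfrak{r}_{j,J}(y)\bigr)\widetilde{T_j}\zeta=\pi_{\hilb_\bullet,I\times J}(x\otimes y)\zeta.
\]
Both sides are bounded (by the preceding Lemma for $\mu_0^{\mathfrak{L}}$), so the identity extends to all of $\hilb_\bullet$.

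\textbf{Expected obstacle.} The main difficulty is the careful identification of the tensor-product fusion: we need to make precise how $\boxtimes$ of $\A\otimes\A^\mathfrak{r}$-modules acts on these product operators, and that $\mu_0^{\mathfrak{L}}$ composed with the product of $L$'s acts factorwise. In particular we need $\Theta\cdot(\operatorname{id}_0\boxtimes\cdot)\cdot\Theta^{-1}$ to be compatible with the identification $\hilb_0\boxtimes\Theta_n\hilb_n=\Theta_n\hilb_n$. This should follow from the second PCT proposition, $\Theta_{0\boxtimes n}L(\xi,\widetilde K)=R(\Theta_0\xi,\mathfrak{r}\widetilde K)\Theta_n$, combined with the strictness assumptions.
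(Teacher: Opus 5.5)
Your proposal is correct and follows essentially the same route as the paper. Both take $i=0$ with $x\Omega\in\hilb_0(I)$ and $y\Omega\in\hilb_0(\mathfrak{r}J)$, compute $\mu_0^{\mathfrak{L}}$ on finite-support vectors, and recover $\sum_j\widetilde{T_j}^*(\pi_{j,I}(x)\otimes\pi^{\mathfrak{r}}_{j,J}(y))\widetilde{T_j}$. You also make explicit two points the paper leaves implicit: the normalization $V^j_{0,j}=\operatorname{id}_j$, which your associativity-plus-global-rescaling argument justifies, and the passage from elementary tensors to $\A(I)\otimes\A^{\mathfrak{r}}(J)$ via normality. The abandoned first choice $\Theta y\Theta^{-1}\Omega$ in Step 1 should simply be deleted.
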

\begin{proof}
Fix a vector $\xi\in\hilb_\bullet$ such that $\operatorname{supp}\xi = \left\{ i\in\Z: \widetilde{T_{i}}\xi \neq 0 \right\}$ is a finite subset of $\Z$ and 
$\xi =\sum_{j\in \operatorname{supp}\xi} \sum_{k =1}^{\xi(j)}\eta_j^{(j_k)} \otimes \zeta_j^{(j_k)}$,
where for any $j\in\operatorname{supp}\xi$,
$\xi(j)\in\Z_{>0}$ and for any $1 \leq k \leq \xi(j)$,
$\eta_j^{(j_k)}, \zeta_j^{(j_k)} \in \hilb_j$.
For $\widetilde{I}, \widetilde{J} \in\widetilde{\mathcal{I}}$, $x\in \A(I)$ and $y \in \A^\mathfrak{r}(J)$, we have: 
    \begin{align*}
        \mu_0^{\mathfrak{L}}(L(x\Omega, \widetilde{I})\otimes \Theta L(y\Omega, \mathfrak{r}\widetilde{J})\Theta^{-1})\xi&=  \sum_{j\in\operatorname{supp}\xi}\sum_{k =1}^{\xi(j)}\widetilde{T_{j}}^*\widetilde{V^{j}_{0,j}}[(L(x\Omega, \widetilde{I})\eta_j^{(j_k)})\otimes (\Theta L(y\Omega, \mathfrak{r}\widetilde{J})\Theta^{-1}\Theta_j\zeta_j^{(j_k)})] \\
        & = \sum_{j\in\operatorname{supp}\xi}\sum_{k =1}^{\xi(j)}\widetilde{T_{j}}^*(V^j_{0,j}L(x\Omega, \widetilde{I})\eta_j^{(j_k)})\otimes(\overline{V^j_{0,j}}\Theta L(y\Omega, \mathfrak{r}\widetilde{J})\zeta_j^{(j_k)})\\
        & = \sum_{j\in\operatorname{supp}\xi}\sum_{k =1}^{\xi(j)}\widetilde{T_{j}}^* (V^j_{0,j}L(x\Omega, \widetilde{I})\eta_j^{(j_k)})\otimes (\Theta_jV^j_{0,j}(L(y\Omega, \mathfrak{r}\widetilde{J})\zeta_j^{(j_k)})) \\
        & = \sum_{j\in\operatorname{supp}\xi}\sum_{k =1}^{\xi(j)}\widetilde{T_{j}}^*  (\pi_{j,I}(x)\eta_j^{(j_k)} \otimes \Theta_j \cdot \pi_{j,\mathfrak{r}J}(y) \cdot \Theta_j^{-1} \cdot \Theta_j \zeta_j^{(j_k)}) \\
        & =  \left(\sum_{j\in\operatorname{supp}\xi}\widetilde{T_{j}}^* (\pi_{j,I}(x) \otimes \pi_{j,J}^{\mathfrak{r}}(y))\widetilde{T_{j}}\right)\xi \\
        &= \pi_{\hilb_\bullet, I\times J}(x\otimes y)\xi.
    \end{align*}
    By a  similar argument, we obtain $\pi_{\hilb_\bullet, I \times J}(\A(I)\otimes\A^\mathfrak{r}(J)) \subset  \B^{\mathfrak{R}}(\widetilde{I}\times\widetilde{J}) $ .
\end{proof}

\begin{proposition}\label{cyclicvacuum}
    Let $\zeta \in \hilb_\bullet$ such that 
    $\widetilde{T_{i}}\zeta  = \sum^{\zeta(i)}_{{k}=1}\xi_i^{(i_k)}\otimes \Theta_i\eta_i^{(i_k)}$, where 
    $i\in\Z, \zeta(i)\in \Z_{> 0}$ and 
    $\xi_i^{(i_k)}  \in \hilb_i(I),\  \eta_i^{(i_k)}\in\hilb_i(\mathfrak{r}J) $, for $1 \leq k \leq \zeta(i)$, 
    where $\widetilde{I}, \widetilde{J}\in\widetilde{\mathcal{I}}$.
Moreover, assume that $\operatorname{supp}\zeta $ is a finite subset of $\Z$.
   Then, 
    \begin{align*}
         \zeta &=  \sum_{i\in\operatorname{supp}\zeta}\mu_i^{\mathfrak{L}}\left( \sum^{\zeta(i)}_{{k}=1}L(\xi_i^{(i_k)},\widetilde{I})\otimes \Theta L(\eta_i^{(i_k)}, \mathfrak{r}\widetilde{J})\Theta^{-1}\right)\widetilde{T_0}^* (\Omega \otimes \Omega) \\
         &=  \sum_{i\in\operatorname{supp}\zeta}\mu_i^{\mathfrak{R}}\left(\sum_{k=1}^{\zeta(i)}R(\xi_i^{(i_k)},\widetilde{I})\otimes \Theta R(\eta_i^{(i_k)}, \mathfrak{r}\widetilde{J})\Theta^{-1} \right) \widetilde{T_0}^* (\Omega \otimes \Omega). 
    \end{align*}

\end{proposition}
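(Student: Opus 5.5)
The plan is to imitate the proof of Proposition \ref{cyclicvacuum1}, working one summand $i\in\operatorname{supp}\zeta$ and one term $k$ at a time and then summing. It is enough to establish, for each $i$ and each pair $\xi\in\hilb_i(I)$, $\eta\in\hilb_i(\mathfrak{r}J)$, the identity
\[
\mu_i^{\mathfrak{L}}\left(L(\xi,\widetilde{I})\otimes \Theta L(\eta,\mathfrak{r}\widetilde{J})\Theta^{-1}\right)\widetilde{T_0}^*(\Omega\otimes\Omega)=\widetilde{T_i}^*(\xi\otimes\Theta_i\eta).
\]
Summing over $k$ and then over the finite set $\operatorname{supp}\zeta$ gives $\sum_i \widetilde{T_i}^*\widetilde{T_i}\zeta=\zeta$, which uses the completeness relation $\sum_i\widetilde{T_i}^*\widetilde{T_i}=\operatorname{id}$ together with the finiteness of the support.

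To prove the single-term identity, first note that $\widetilde{T_0}^*(\Omega\otimes\Omega)$ is the vector $\Omega\otimes\Theta_0\Omega$ in the $0$-th summand. Here I use that $\Theta_0\Omega=\Omega$ for the PCT operator on the vacuum module, which follows from \cite{gui2021bisognano} since $\Theta$ restricts to the modular conjugation on $\hilb_0$. Next apply functoriality of the categorical extension in the form used in the proof of Proposition \ref{cyclicvacuum1}, namely $L(\xi,\widetilde{I})T^*=(\operatorname{id}\boxtimes T^*)L(\xi,\widetilde{I})$, tensored with its $\Theta$-conjugated counterpart. This moves $\widetilde{T_0}^*$ past the fields. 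By the state-field correspondence the tensor product of fields sends $\Omega\otimes\Omega$ to $\xi\otimes\Theta L(\eta,\mathfrak{r}\widetilde{J})\Omega$.

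The anti-unitary second factor needs more care, and I expect this bookkeeping to be the main obstacle. Because of how $\Theta$ intertwines fields, the second tensor factor becomes $\Theta_{i\boxtimes 0}L(\eta,\mathfrak{r}\widetilde{J})\Omega=\Theta_i\eta$, after identifying $\hilb_i\boxtimes\hilb_0=\hilb_i$. Then only the $j=0$ term of $\mu_i^{\mathfrak{L}}=\sum_j\widetilde{T_{i+j}}^*\widetilde{V^{i+j}_{i,j}}(\operatorname{id}\boxtimes\widetilde{T_j})$ survives, because $\widetilde{T_j}\widetilde{T_0}^*=\delta_{j,0}$. Under the strictness identifications $V^i_{i,0}$ is the identity, so $\overline{V^i_{i,0}}=\Theta_iV^i_{i,0}\Theta_i^{-1}$ is also the identity. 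This yields $\widetilde{T_i}^*(\xi\otimes\Theta_i\eta)$.

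To handle the anti-unitary factor I will write $\Theta L(\eta,\mathfrak{r}\widetilde{J})\Theta^{-1}$ acting on $\Theta_0\Omega$ explicitly, exactly as in the computation in Proposition \ref{extension}. If the normalisation $V^i_{i,0}=\operatorname{id}$ is not automatic from Lemma \ref{associativity and Frobenius}, I will use the fact that such a family can be chosen with $V^i_{i,0}=V^i_{0,i}=\operatorname{id}$, which is consistent with the unit condition already used in Theorem \ref{simplecurrentextension}.

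The $\mathfrak{R}$ identity follows the same route with $R$ in place of $L$. In that case only the term $j=0$ of $\mu_i^{\mathfrak{R}}$ survives, and one uses $V^i_{0,i}=\operatorname{id}$, the relation $R(\xi,\widetilde{I})\Omega=\xi$, and $\Theta R(\eta,\mathfrak{r}\widetilde{J})\Theta^{-1}\Theta_0\Omega=\Theta_i\eta$.
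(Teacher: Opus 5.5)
Your proposal is correct and follows essentially the same route as the paper: move $\widetilde{T_0}^*$ past the fields by functoriality, keep only the $j=0$ term of $\mu_i^{\mathfrak{L}}$ (resp.\ $\mu_i^{\mathfrak{R}}$), apply the state-field correspondence together with $V^i_{i,0}=V^i_{0,i}=\operatorname{id}$, and sum over the finite support. The points you make explicit are used only tacitly in the paper: $\Theta_0\Omega=\Omega$, and the normalisation of $V$. In fact the normalisation is automatic up to one global phase, since the associativity identity with $k=j=0$ (resp.\ $i=k=0$) forces $V^i_{i,0}=V^0_{0,0}\operatorname{id}_i$ and $V^j_{0,j}=V^0_{0,0}\operatorname{id}_j$.
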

\begin{proof}

\begin{align*}
    \sum_{i\in\operatorname{supp}\zeta}\mu_i^{\mathfrak{L}}\left(\sum^{\zeta(i)}_{{k}=1}L(\xi_i^{(i_k)},\widetilde{I})\otimes \Theta L(\eta_i^{(i_k)}, \mathfrak{r}\widetilde{J})\Theta^{-1} \right) \widetilde{T_0}^* (\Omega \otimes \Omega) &=  \sum_{i\in\operatorname{supp}\zeta}\mu_i^{\mathfrak{L}}((\operatorname{id}_i\otimes \operatorname{id}_0)\boxtimes \widetilde{T_0}^*\boxtimes (\operatorname{id}_0\otimes\operatorname{id}_{\overline{i}})) \cdot\\
    &(\sum^{\zeta(i)}_{{k}=1} L(\xi_i^{(i_k)},\widetilde{I})\otimes \Theta L(\eta_i^{(i_k)}, \mathfrak{r}\widetilde{J})\Theta^{-1})(\Omega\otimes\Omega)\\
    &= \sum_{i\in\operatorname{supp}\zeta} \widetilde{T_i}^* \widetilde{V^i_{i,0}}(\sum^{\zeta(i)}_{{k}=1}L(\xi_i^{(i_k)},\widetilde{I})\Omega \otimes L(\eta_i^{(i_k)}, \mathfrak{r}\widetilde{J})\Omega)\\
    &= \sum_{i\in\operatorname{supp}\zeta} \widetilde{T_i}^*(\sum^{\zeta(i)}_{{k}=1}V^i_{i,0}L(\xi_i^{(i_k)},\widetilde{I})\Omega\otimes \Theta_i V^i_{i,0}L(\eta_i^{(i_k)},\mathfrak{r}\widetilde{J})\Omega) \\
    &= \sum_{i\in\operatorname{supp}\zeta} \widetilde{T_i}^*(\sum^{\zeta(i)}_{{k}=1} \xi_i^{(i_k)}\otimes \Theta_i\eta_i^{(i_k)})\\
    & = \zeta.
\end{align*}
 The other identity follows similarly. 
\end{proof}

\begin{lemma}\label{assoFroLR1}
    The unitary intertwiners satisfy the following identities:
    \begin{align*}
        \widetilde{V^{i+k+j}_{i+k,j}}(\widetilde{V^{i+k}_{i,k}}\boxtimes \operatorname{id}_{\hilb_j\otimes\Theta_j\hilb_j}) &=   \widetilde{V^{i+k+j}_{i, k+j}}(\operatorname{id}_{\hilb_i\otimes\Theta_i\hilb_i}\boxtimes \widetilde{V^{k+j}_{k,j})} \\
         \widetilde{V^{i+j}_{i,j}} \widetilde{\mathbb{B}_{j,i}} &= \widetilde{V^{i+j}_{j,i}},
    \end{align*}
  where $ \widetilde{\mathbb{B}_{j,i}} = \mathbb{B}_{j,i} \otimes  \mathbb{B}_{\Theta_i\hilb_i, \Theta_j\hilb_j }, \ i, j , k \in\Z.$

\end{lemma}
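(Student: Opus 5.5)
The plan is to reduce both identities to the corresponding identities of Lemma \ref{associativity and Frobenius}, tensored with their conjugates. The conjugation $F\mapsto \overline{F} = \Theta\, F\, \Theta^{-1}$ is compatible with composition and with the monoidal structure. The right tensor factor of $\widetilde{V^{i+j}_{i,j}}$ is $\overline{V^{i+j}_{i,j}}=\Theta_{i+j}V^{i+j}_{i,j}\Theta^{-1}_{i\boxtimes j}$. Since the fusion on $\A\otimes\A^{\mathfrak r}$-modules of the form $\hilb_i\otimes\Theta_i\hilb_i$ is computed factorwise, $\widetilde{V}\boxtimes\operatorname{id}$ splits as $(V\boxtimes\operatorname{id})\otimes(\overline{V}\boxtimes\operatorname{id})$. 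This splitting uses the identification $\Theta_i\hilb_i\boxtimes\Theta_j\hilb_j\cong\Theta_{i\boxtimes j}(\hilb_i\boxtimes\hilb_j)$, which is implicit in the definition of $\widetilde{V}$. Hence each identity separates into a left-chiral and a right-chiral part.

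For associativity, the left tensor factor is exactly identity (3) of Lemma \ref{associativity and Frobenius}. For the right factor, I would check that conjugation is compatible with whiskering: $\overline{V}\boxtimes\operatorname{id}_{\Theta_j\hilb_j}$ equals $\Theta_{(i+k)\boxtimes j}(V\boxtimes\operatorname{id}_j)\Theta^{-1}_{i\boxtimes k\boxtimes j}$. This follows from the preceding proposition, $\Theta_{i\boxtimes j}L(\xi,\widetilde I)\eta=R(\Theta_i\xi,\mathfrak r\widetilde I)\Theta_j\eta$, together with functoriality and the density of $R(\hilb_i,\cdot)\hilb_j$. Concretely, evaluate both sides on vectors $L(\xi,\widetilde I)\eta$, and move $V\boxtimes \operatorname{id}$ past $R$ (respectively $\operatorname{id}\boxtimes V$ past $L$) by functoriality. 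The right factor then becomes $\Theta\circ(\text{both sides of }(3))\circ\Theta^{-1}$, so it also agrees.

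For the braiding identity, $\widetilde{\mathbb B_{j,i}}=\mathbb B_{j,i}\otimes\mathbb B_{\Theta_j\hilb_j,\Theta_i\hilb_i}$ (I read the stated subscripts as the braiding from $\Theta_j\hilb_j\boxtimes\Theta_i\hilb_i$). The left factor gives $V^{i+j}_{i,j}\mathbb B_{j,i}=V^{i+j}_{j,i}$ by (4). For the right factor, by definition $\mathbb B_{\Theta_j\hilb_j,\Theta_i\hilb_i}=\Theta_{j\boxtimes i}\mathbb B_{j,i}^{-1}\Theta^{-1}_{i\boxtimes j}$, so
\[
\overline{V^{i+j}_{i,j}}\,\mathbb B_{\Theta_j\hilb_j,\Theta_i\hilb_i}=\Theta_{i+j}V^{i+j}_{i,j}\,\mathbb B_{j,i}^{-1}\,\Theta^{-1}_{i\boxtimes j}
\]
after matching the domains via $\Theta$. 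Identity (4) is $V_{i,j}\mathbb B_{j,i}=V_{j,i}$, but this term instead contains $\mathbb B_{j,i}^{-1}$. To handle it I would use the triviality $\chi\equiv 1$ from the proof of Lemma \ref{associativity and Frobenius}, applied with $n,k$ swapped, which gives $V_{j,i}\mathbb B_{i,j}=V_{i,j}$. Combined with (4), this yields $V_{i,j}=V_{j,i}\mathbb B_{i,j}$, i.e. $V_{j,i}=V_{i,j}\mathbb B_{i,j}^{-1}$. Because the braiding is symmetric on $\mathcal C$ (trivial bicharacter), we have $\mathbb B_{i,j}^{-1}=\mathbb B_{j,i}$, and so $V_{i,j}\mathbb B_{j,i}^{-1}=V_{i,j}\mathbb B_{i,j}=V_{j,i}$. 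Conjugating by $\Theta$ then gives $\overline{V^{i+j}_{j,i}}$.

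The main obstacle is bookkeeping rather than mathematics: making precise that $\Theta$ intertwines fusion of morphisms with the correct ordering. It reverses left and right, so $\Theta(F\boxtimes G)\Theta^{-1}$ must be identified with the conjugates in the appropriate order and with the correct identification $\Theta_{i\boxtimes j}$ versus $\Theta_{j\boxtimes i}$. I would fix this once via the $L$/$R$ exchange proposition and density, after which both identities follow mechanically.
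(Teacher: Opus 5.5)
Your strategy is the paper's: split $\widetilde{V}$ and $\widetilde{\mathbb{B}_{j,i}}$ into chiral halves, apply Lemma \ref{associativity and Frobenius} to the left half, and conjugate by $\Theta$ for the right half. The associativity half is fine. The braiding half, however, does not work as written.

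You replace the paper's $\mathbb{B}_{\Theta_i\hilb_i,\Theta_j\hilb_j}$ by $\Theta_{j\boxtimes i}\mathbb{B}_{j,i}^{-1}\Theta^{-1}_{i\boxtimes j}$. This operator maps $\Theta_{i\boxtimes j}(\hilb_i\boxtimes\hilb_j)$ to $\Theta_{j\boxtimes i}(\hilb_j\boxtimes\hilb_i)$. Whatever convention you use for fusing the $\Theta$-twisted modules, the identity $\widetilde{V^{i+j}_{i,j}}\widetilde{\mathbb{B}_{j,i}}=\widetilde{V^{i+j}_{j,i}}$ forces the right factor of $\widetilde{\mathbb{B}_{j,i}}$ to go the other way. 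It must run from $\Theta_{j\boxtimes i}(\hilb_j\boxtimes\hilb_i)$, the domain of $\overline{V^{i+j}_{j,i}}$, to $\Theta_{i\boxtimes j}(\hilb_i\boxtimes\hilb_j)$, the domain of $\overline{V^{i+j}_{i,j}}$.

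Consequently your composite does not compose. The cancellation $\Theta^{-1}_{i\boxtimes j}\Theta_{j\boxtimes i}=\operatorname{id}$ hidden in ``after matching the domains via $\Theta$'' is not available. Your displayed $\Theta_{i+j}V^{i+j}_{i,j}\mathbb{B}_{j,i}^{-1}\Theta^{-1}_{i\boxtimes j}$ is ill-typed: $\mathbb{B}_{j,i}^{-1}$ lands in $\hilb_j\boxtimes\hilb_i$, while $V^{i+j}_{i,j}$ is defined on $\hilb_i\boxtimes\hilb_j$. The chain $V_{i,j}\mathbb{B}_{j,i}^{-1}=V_{i,j}\mathbb{B}_{i,j}=V_{j,i}$ is ill-typed for the same reason. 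The symmetry $\mathbb{B}_{i,j}^{-1}=\mathbb{B}_{j,i}$ on $\mathcal{C}$ is true, but it cannot repair composites whose pieces do not compose.

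The paper's subscripts are in fact correct. The exchange relation $\Theta_{i\boxtimes j}L(\xi,\widetilde I)\eta=R(\Theta_i\xi,\mathfrak r\widetilde I)\Theta_j\eta$ shows that $\Theta$ reverses the order of fusion: $\Theta_{i\boxtimes j}(\hilb_i\boxtimes\hilb_j)=\Theta_j\hilb_j\boxtimes\Theta_i\hilb_i$. Hence $\mathbb{B}_{\Theta_i\hilb_i,\Theta_j\hilb_j}=\Theta_{i\boxtimes j}\mathbb{B}_{i,j}^{-1}\Theta^{-1}_{j\boxtimes i}$ has exactly the required source and target. Your same-order identification $\Theta_i\hilb_i\boxtimes\Theta_j\hilb_j\cong\Theta_{i\boxtimes j}(\hilb_i\boxtimes\hilb_j)$ is harmless only as a definition of the right-chiral fusion; under either reading, the operator you need is the one the paper writes.

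With that operator the $\Theta$'s cancel on the nose:
\[
\overline{V^{i+j}_{i,j}}\,\mathbb{B}_{\Theta_i\hilb_i,\Theta_j\hilb_j}
=\Theta_{i+j}V^{i+j}_{i,j}\,\mathbb{B}_{i,j}^{-1}\,\Theta^{-1}_{j\boxtimes i}
=\Theta_{i+j}V^{i+j}_{j,i}\,\Theta^{-1}_{j\boxtimes i}
=\overline{V^{i+j}_{j,i}} .
\]
Here $V^{i+j}_{i,j}\mathbb{B}_{i,j}^{-1}=V^{i+j}_{j,i}$ is the second identity of Lemma \ref{associativity and Frobenius} with $(n,k)=(j,i)$. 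That is precisely the relation $V_{j,i}=V_{i,j}\mathbb{B}_{i,j}^{-1}$ you already derived. So the repair costs nothing, and no separate symmetry argument is needed: the inverse braiding on the right-chiral side is absorbed by the index swap.
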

\begin{proof}
    The result follows from Lemma \ref{associativity and Frobenius} and properties of conjugate operators. 
\end{proof}
Combining  Lemma \ref{assoFroLR1} and Lemma \ref{adjointly}, we get the following lemma. 
\begin{lemma}\label{assoFroLR2}
For $i, j \in \Z$, we have: 
\begin{align*}
    \mu_j^{\mathfrak{R}}(\mu_i^{\mathfrak{L}}\boxtimes \operatorname{id}_{{\hilb_j\otimes \Theta_j\hilb_j}}) &= \mu_i^{\mathfrak{L}}(\operatorname{id}_{{\hilb_i\otimes \Theta_i\hilb_i}}\boxtimes\mu_j^{\mathfrak{R}}) \\
    (\mu_i^{\mathfrak{L}}\boxtimes \operatorname{id}_{\hilb_j\otimes\Theta_j\hilb_j})(\operatorname{id}_{\hilb_i\otimes\Theta_i\hilb_i}\boxtimes (\mu_j^{\mathfrak{R}})^*) &= (\mu_j^{\mathfrak{R}})^* \mu_i^{\mathfrak{L}}.
\end{align*}
    
\end{lemma}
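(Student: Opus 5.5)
The plan is to repeat the direct computation of Lemma \ref{adjointly}, with the objects $\hilb_n$ replaced by $\hilb_n\otimes\Theta_n\hilb_n$, the isometries $T_n$ by $\widetilde{T_n}$, and the intertwiners $V^{n+m}_{n,m}$ by $\widetilde{V^{n+m}_{n,m}}$. The only structural input used in Lemma \ref{adjointly} was the associativity identity (3) of Lemma \ref{associativity and Frobenius} together with unitarity of the $V$'s. Lemma \ref{assoFroLR1} supplies exactly the tilde-version of (3), and each $\widetilde{V}$ is unitary since both $V$ and its conjugate $\overline{V}$ are. So the argument should transfer verbatim.

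For the first identity, I expand both sides using the defining sums of $\mu_i^{\mathfrak{L}}$ and $\mu_j^{\mathfrak{R}}$ on $\hilb_\bullet$. Since $\widetilde{T_m}\widetilde{T_l}^*=\delta_{m,l}$, the double sums collapse to single sums. The left side becomes
\[
\sum_{m}\widetilde{T_{m+j}}^*\,\widetilde{V^{m+j}_{m,j}}\bigl(\widetilde{V^m_{i,m-i}}(\operatorname{id}\boxtimes\widetilde{T_{m-i}})\boxtimes\operatorname{id}\bigr).
\]
The right side becomes
\[
\sum_m\widetilde{T_{i+m}}^*\,\widetilde{V^{i+m}_{i,m}}\bigl(\operatorname{id}\boxtimes\widetilde{V^m_{m-j,j}}\bigr)\bigl(\operatorname{id}\boxtimes(\widetilde{T_{m-j}}\boxtimes\operatorname{id})\bigr).
\]
Applying the first identity of Lemma \ref{assoFroLR1} with $k=m-j$, then functoriality of $\boxtimes$, and reindexing $m\mapsto m-i+j$ matches the two sums term by term.

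Convergence has to be checked, since these are infinite sums of operators. By the computation in Proposition \ref{bddoperator} (the preceding lemma on $\hilb_\bullet$), each sum consists of operators with mutually orthogonal ranges, given by the $\widetilde{T}^*$'s, on orthogonal pieces of the domain. The sums therefore converge strongly to bounded operators. Moreover, composition with the bounded operators $\mu\boxtimes\operatorname{id}$ can be carried out termwise on the dense subspace of vectors with finite support, which is enough since both sides are bounded.

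For the second, adjoint, identity I compute $(\mu_j^{\mathfrak{R}})^*=\sum_m(\widetilde{T_m}^*\boxtimes\operatorname{id})\widetilde{V^{m+j}_{m,j}}{}^*\widetilde{T_{m+j}}$. Then $(\mu_j^{\mathfrak{R}})^*\mu_i^{\mathfrak{L}}$ collapses to
\[
\sum_l(\widetilde{T_{i+l-j}}^*\boxtimes\operatorname{id})\,\widetilde{V^{i+l}_{i+l-j,j}}{}^*\,\widetilde{V^{i+l}_{i,l}}\,(\operatorname{id}\boxtimes\widetilde{T_l}).
\]
The left side collapses similarly. Rewriting the associativity identity as $\widetilde{V^{i+l}_{i+l-j,j}}{}^*\,\widetilde{V^{i+l}_{i,l}}=(\widetilde{V^{i+l-j}_{i,l-j}}\boxtimes\operatorname{id})(\operatorname{id}\boxtimes\widetilde{V^{l}_{l-j,j}}{}^*)$, which uses unitarity, the two sides agree.

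The main obstacle is a rigorous justification of Lemma \ref{assoFroLR1} in the form needed here. One must check that the conjugates $\overline{V}=\Theta V\Theta^{-1}$ satisfy the same associativity relation, using $\overline{GF}=\overline{G}\,\overline{F}$ and the compatibility of $\Theta$ with $\boxtimes$. One must also check that $\boxtimes$ on $\A\otimes\A^{\mathfrak r}$-modules of the form $\hilb_i\otimes\Theta_i\hilb_i$ factorizes as a tensor product of the two chiral fusions, so that $\widetilde{V}\boxtimes\operatorname{id}$ equals $(V\boxtimes\operatorname{id})\otimes(\overline{V}\boxtimes\operatorname{id})$. I would take Lemma \ref{assoFroLR1} as given, as the paper states it, and concentrate on the bookkeeping of indices and convergence.
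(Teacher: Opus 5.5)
Your proposal is correct and follows the paper's route. The paper proves this lemma only by "combining Lemma \ref{assoFroLR1} and Lemma \ref{adjointly}", i.e.\ rerunning the computation of Lemma \ref{adjointly} with $\widetilde{T_n}$, $\widetilde{V^{n+m}_{n,m}}$ in place of $T_n$, $V^{n+m}_{n,m}$, using the tilde-associativity and unitarity. Your index bookkeeping for both identities checks out, and your remarks on strong convergence and termwise composition on finitely supported vectors supply detail the paper leaves implicit.
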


\begin{proposition}\label{locality1}
    Let $\widetilde{I_1}, \widetilde{J_1}, \widetilde{I_2}, \widetilde{J_2}\in \widetilde{\mathcal{I}}$. If $\widetilde{I_1}$ is anticlockwise to  $\widetilde{I_2}$ and $\widetilde{J_1} $ is clockwise to $ \widetilde{J_2}$ , then  
    \[
   [ \B^{\mathfrak{L}}(\widetilde{I_1}\times\widetilde{J_1}), \B^{\mathfrak{R}}(\widetilde{I_2}\times\widetilde{J_2})    ] =\{0\} .
    \]
    
\end{proposition}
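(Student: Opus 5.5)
The plan is to mimic the proof of Proposition \ref{commutant}, now in the tensor-product setting. It suffices to show that every generator of $S^{\mathfrak{L}}(\widetilde{I_1}\times\widetilde{J_1})$ commutes adjointly with every generator of $S^{\mathfrak{R}}(\widetilde{I_2}\times\widetilde{J_2})$. Adjoint commutativity on generating sets then passes to the generated von Neumann algebras, because $S^{\mathfrak{R}}(\widetilde{I_2}\times\widetilde{J_2})\subset S^{\mathfrak{L}}(\widetilde{I_1}\times\widetilde{J_1})'$ implies $\B^{\mathfrak{R}}\subset \B^{\mathfrak{L}\,\prime}$. By linearity we may take a single summand on each side. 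So we fix $X=\mu_i^{\mathfrak{L}}(L(\xi,\widetilde{I_1})\otimes \Theta L(\eta,\mathfrak{r}\widetilde{J_1})\Theta^{-1})$ and $Y=\mu_j^{\mathfrak{R}}(R(\xi',\widetilde{I_2})\otimes\Theta R(\eta',\mathfrak{r}\widetilde{J_2})\Theta^{-1})$, with $\xi\in\hilb_i(I_1)$, $\eta\in\hilb_i(\mathfrak{r}J_1)$, $\xi'\in\hilb_j(I_2)$, $\eta'\in\hilb_j(\mathfrak{r}J_2)$.

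We write the same $3\times3$ diagram as in Proposition \ref{commutant}. The top row is $\hilb_\bullet\to(\hilb_i\otimes\Theta_i\hilb_i)\boxtimes\hilb_\bullet\to\hilb_\bullet$, the middle row carries an extra $\boxtimes(\hilb_j\otimes\Theta_j\hilb_j)$, and the vertical arrows are the $R$-type fields followed by $\mu_j^{\mathfrak{R}}$. The lower-right square commutes adjointly by Lemma \ref{assoFroLR2}. The upper-right and lower-left squares commute adjointly by functoriality of the categorical extension, applied tensor factorwise. To apply it, we first decompose $\mu_i^{\mathfrak{L}}$ and $\mu_j^{\mathfrak{R}}$ sectorwise through the $\widetilde{T_k}$. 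On each $\hilb_k\otimes\Theta_k\hilb_k$, the operator $\widetilde{V}$ is $V\otimes\overline{V}$, where $\overline{V}=\Theta V\Theta^{-1}$ is an $\A^{\mathfrak{r}}$-intertwiner. Hence $\Theta R(\eta',\mathfrak{r}\widetilde{J_2})\Theta^{-1}=L(\Theta\eta',\widetilde{J_2})$ commutes with $\overline{V}$, by functoriality and the PCT proposition. Because the operators are bounded and the sectorwise sums converge strongly, these identities extend to all of $\hilb_\bullet$.

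The main step is the upper-left square. In the first tensor factor we need $L(\xi,\widetilde{I_1})$ and $R(\xi',\widetilde{I_2})$ to commute adjointly, and this is exactly the locality axiom, since $\widetilde{I_1}$ is anticlockwise to $\widetilde{I_2}$. In the second factor, the PCT theorem rewrites $\Theta L(\eta,\mathfrak{r}\widetilde{J_1})\Theta^{-1}=R(\Theta\eta,\widetilde{J_1})$ and $\Theta R(\eta',\mathfrak{r}\widetilde{J_2})\Theta^{-1}=L(\Theta\eta',\widetilde{J_2})$, using $\mathfrak{r}\mathfrak{r}=\mathrm{id}$. Since $\widetilde{J_2}$ is anticlockwise to $\widetilde{J_1}$, locality again gives adjoint commutativity, now with the roles of $L$ and $R$ exchanged. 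This explains the reversed orientation condition on the $J$'s.

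I expect the main obstacle to be bookkeeping rather than analysis. The objects $\Theta_i\hilb_i$ must be identified with the duals $\hilb_{\overline{i}}$ in $\operatorname{Rep}^{\operatorname{d}}(\A)$, so that the categorical-extension axioms apply in the second factor. It must also be checked that the tensor product of two adjointly commuting squares commutes adjointly. The latter holds because adjoint commutativity is preserved under $\otimes$: one applies each square's relations factorwise on elementary tensors and extends by continuity and boundedness.
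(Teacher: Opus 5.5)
Your proposal is correct and follows the paper's own argument. You use the $3\times 3$ diagram of Proposition \ref{commutant}, with locality of the categorical extension for the upper-left square (applied factorwise, and in the second factor via the PCT relations, which is where the reversed orientation of the $\widetilde{J}$'s enters), functoriality for the two off-diagonal squares, and Lemma \ref{assoFroLR2} for the lower-right square. This is essentially what the paper's one-line proof of $S^{\mathfrak{R}}(\widetilde{I}'\times\mathfrak{r}\widetilde{J}')\subset S^{\mathfrak{L}}(\widetilde{I}\times\mathfrak{r}\widetilde{J})'$ asserts, only spelled out in more detail.
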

\begin{proof}
Following the proof of Proposition \ref{commutant} and  combining the locality of the categorical extension with  Lemma \ref{assoFroLR1} and Lemma \ref{assoFroLR2}, 
we obtain: 
\[
S^{\mathfrak{R}}(\widetilde{I}'\times \mathfrak{r}\widetilde{J}') \subset S^{\mathfrak{L}}(\widetilde{I}\times \mathfrak{r}\widetilde{J})'.
\]
    
\end{proof}

\begin{lemma}\label{commuLR}
  For any $i,j \in \Z$, we have  
  \[
  \mu_i^{\mathfrak{L}}(\operatorname{id}_{\hilb_i\otimes \Theta_i\hilb_i}\boxtimes\widetilde{T_j}^*)\widetilde{\mathbb{B}_{j,i}} = \mu_i^{\mathfrak{R}}(\widetilde{T_j}^*\boxtimes\operatorname{id}_{\hilb_i\otimes \Theta_i\hilb_i}).
  \]
\end{lemma}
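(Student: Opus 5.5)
The plan is to copy the computation of Proposition \ref{localbriading} verbatim in the two-dimensional setting. The only new input needed is the identity $\widetilde{V^{i+j}_{i,j}}\,\widetilde{\mathbb{B}_{j,i}} = \widetilde{V^{i+j}_{j,i}}$ from Lemma \ref{assoFroLR1}.

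First, expand the left-hand side using the definition of $\mu_i^{\mathfrak{L}}$ on $\hilb_\bullet$:
\[
\mu_i^{\mathfrak{L}}(\operatorname{id}\boxtimes\widetilde{T_j}^*)\widetilde{\mathbb{B}_{j,i}}
= \sum_{k\in\Z}\widetilde{T_{i+k}}^*\widetilde{V^{i+k}_{i,k}}\bigl(\operatorname{id}\boxtimes \widetilde{T_k}\widetilde{T_j}^*\bigr)\widetilde{\mathbb{B}_{j,i}} .
\]
The relation $\widetilde{T_k}\widetilde{T_j}^*=\delta_{k,j}\operatorname{id}$ collapses the sum to the single term $\widetilde{T_{i+j}}^*\widetilde{V^{i+j}_{i,j}}\widetilde{\mathbb{B}_{j,i}}$. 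Here $\widetilde{\mathbb{B}_{j,i}}$ is regarded as the braiding
\[
(\hilb_j\otimes\Theta_j\hilb_j)\boxtimes(\hilb_i\otimes\Theta_i\hilb_i)\to(\hilb_i\otimes\Theta_i\hilb_i)\boxtimes(\hilb_j\otimes\Theta_j\hilb_j),
\]
so no composition with $\widetilde{T_j}$ is needed; the operator $\widetilde{T_j}^*$ only serves to select the $j$-th summand.

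Second, apply Lemma \ref{assoFroLR1} to replace $\widetilde{V^{i+j}_{i,j}}\widetilde{\mathbb{B}_{j,i}}$ by $\widetilde{V^{i+j}_{j,i}}$. The left-hand side becomes $\widetilde{T_{i+j}}^*\widetilde{V^{i+j}_{j,i}}$.

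Third, expand the right-hand side with the definition of $\mu_i^{\mathfrak{R}}$ and use orthogonality once more:
\[
\mu_i^{\mathfrak{R}}(\widetilde{T_j}^*\boxtimes\operatorname{id})=\sum_{k}\widetilde{T_{i+k}}^*\widetilde{V^{i+k}_{k,i}}(\widetilde{T_k}\widetilde{T_j}^*\boxtimes\operatorname{id})=\widetilde{T_{i+j}}^*\widetilde{V^{i+j}_{j,i}} .
\]
This matches the left-hand side and proves the identity.

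The main point requiring care is interchanging the infinite sums defining $\mu_i^{\mathfrak{L}},\mu_i^{\mathfrak{R}}$ with composition by $\widetilde{T_j}^*$. By the boundedness lemma for $\mu_i^{\mathfrak{L}},\mu_i^{\mathfrak{R}}$ on $\hilb_\bullet$, these sums converge strongly, with mutually orthogonal ranges and domains. Composition on the right with the bounded operator $\operatorname{id}\boxtimes\widetilde{T_j}^*$ (or $\widetilde{T_j}^*\boxtimes\operatorname{id}$) therefore preserves strong convergence, and the termwise collapse is justified. Functoriality of $\boxtimes$ gives $(\operatorname{id}\boxtimes\widetilde{T_k})(\operatorname{id}\boxtimes\widetilde{T_j}^*)=\operatorname{id}\boxtimes\widetilde{T_k}\widetilde{T_j}^*$.

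A subsidiary check is that Lemma \ref{assoFroLR1} really applies. This needs the tensor factor $\mathbb{B}_{\Theta_i\hilb_i,\Theta_j\hilb_j}$ in $\widetilde{\mathbb{B}_{j,i}}$ to be the braiding adapted to $\Theta$, for which Proposition \ref{braidexchange} is the relevant input. I would take that lemma as given, since it is stated earlier.
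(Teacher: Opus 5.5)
Your proposal is correct and is essentially the paper's proof: collapse the sum defining $\mu_i^{\mathfrak{L}}$ using $\widetilde{T_k}\widetilde{T_j}^*=\delta_{k,j}\operatorname{id}$, apply $\widetilde{V^{i+j}_{i,j}}\widetilde{\mathbb{B}_{j,i}}=\widetilde{V^{i+j}_{j,i}}$ from Lemma \ref{assoFroLR1}, and recognize the result as $\mu_i^{\mathfrak{R}}(\widetilde{T_j}^*\boxtimes\operatorname{id})$. The paper's extra naturality step, which moves $\widetilde{T_j}\widetilde{T_j}^*$ across the braiding, is vacuous because that operator is the identity, and your strong-convergence remark is a harmless justification the paper leaves implicit.
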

\begin{proof}

\begin{align*}
    \mu_i^{\mathfrak{L}}(\operatorname{id}_{\hilb_i\otimes \Theta_i\hilb_i}\boxtimes\widetilde{T_j}^*)\widetilde{\mathbb{B}_{j,i}}  &=\widetilde{T_{i+j}}^*\widetilde{V^{i+j}_{i,j}}(\operatorname{id}_{\hilb_i\otimes \Theta_i\hilb_i}\boxtimes\widetilde{T_j}\widetilde{T_j}^*)\widetilde{\mathbb{B}_{j,i}} \\
    & = \widetilde{T_{i+j}}^*\widetilde{V^{i+j}_{i,j}}\widetilde{\mathbb{B}_{j,i}} (\widetilde{T_j}\widetilde{T_j}^*\boxtimes\operatorname{id}_{\hilb_i\otimes \Theta_i\hilb_i})\\
    &=  \widetilde{T_{i+j}}^*\widetilde{V^{i+j}_{j,i}}(\widetilde{T_j}\widetilde{T_j}^*\boxtimes\operatorname{id}_{\hilb_i\otimes \Theta_i\hilb_i})\\
    & = \mu_i^{\mathfrak{R}}(\widetilde{T_j}^*\boxtimes\operatorname{id}_{\hilb_i\otimes \Theta_i\hilb_i}).
\end{align*}
    
\end{proof}

\begin{lemma}\label{BR LR}
    For $i,j\in\Z$, $\widetilde{I}, \widetilde{J} \in \widetilde{\mathcal{I}} $ and $\xi_i\in\hilb_i(I),\ \eta_i \in \hilb_i(\mathfrak{r}\widetilde{J}), \ \alpha \in \hilb_j, \ \beta \in \Theta_j\hilb_j,$
we have: 
\[
\widetilde{\mathbb{B}_{j,i}} \circ (R(\xi_i,\widetilde{I})\otimes L(\Theta_i\eta_i,\widetilde{J}))(\alpha\otimes\beta)) = 
                                         L(\xi_i,\widetilde{I})\otimes R(\Theta_i\eta_i,\widetilde{J})(\alpha\otimes\beta).
\]
\end{lemma}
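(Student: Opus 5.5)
The identity splits along the tensor product. Since $\widetilde{\mathbb{B}_{j,i}} = \mathbb{B}_{j,i}\otimes \mathbb{B}_{\Theta_i\hilb_i,\Theta_j\hilb_j}$ and the operator $R(\xi_i,\widetilde{I})\otimes L(\Theta_i\eta_i,\widetilde{J})$ acts factorwise on $\alpha\otimes\beta$, I will prove the two factor identities separately and then tensor them. On the first factor I need
\[
\mathbb{B}_{j,i}R(\xi_i,\widetilde{I})\alpha = L(\xi_i,\widetilde{I})\alpha .
\]
On the second factor I need
\[
\mathbb{B}_{\Theta_i\hilb_i,\Theta_j\hilb_j}L(\Theta_i\eta_i,\widetilde{J})\beta = R(\Theta_i\eta_i,\widetilde{J})\beta .
\]

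For the first factor, the unitary braiding axiom (F) gives $\mathbb{B}_{i,j}L(\xi_i,\widetilde{I})\alpha = R(\xi_i,\widetilde{I})\alpha$. Hence $L(\xi_i,\widetilde{I})\alpha=\mathbb{B}_{i,j}^{-1}R(\xi_i,\widetilde{I})\alpha$, and I must identify $\mathbb{B}_{i,j}^{-1}$ with $\mathbb{B}_{j,i}$ on these vectors. In general $\mathbb{B}_{j,i}\ne\mathbb{B}_{i,j}^{-1}$. Here, however, all objects lie in the category $\mathcal{C}$ generated by $\hilb_1$. In Lemma \ref{associativity and Frobenius} the bicharacter $\chi$ is trivial, so $V^{i+j}_{j,i}\mathbb{B}_{i,j}=V^{i+j}_{i,j}$ and $V^{i+j}_{i,j}\mathbb{B}_{j,i}=V^{i+j}_{j,i}$. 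Composing these gives $V^{i+j}_{i,j}\mathbb{B}_{j,i}\mathbb{B}_{i,j}=V^{i+j}_{i,j}$. Since $V$ is unitary, the monodromy $\mathbb{B}_{j,i}\mathbb{B}_{i,j}$ is the identity, so $\mathbb{B}_{j,i}=\mathbb{B}_{i,j}^{-1}$. This symmetry step is the main point where the hypotheses of Section 5 (trivial self-braiding, $d_1=1$) enter.

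For the second factor, I apply Proposition \ref{braidexchange}, which needs $\beta$ written as $\Theta_j$ of a vector. Since $\Theta_j$ is antiunitary and bijective onto $\Theta_j\hilb_j$, I write $\beta=\Theta_j\gamma$ with $\gamma\in\hilb_j$. The vector $\Theta_i\eta_i$ lies in $\Theta_i\hilb_i(\mathfrak{r}J)$, which by the PCT theorem is contained in the bounded vectors for the interval $\mathfrak{r}\mathfrak{r}J=J$, so the field is well-defined. I then set $\widetilde{I}_0:=\mathfrak{r}\widetilde{J}$, so $\mathfrak{r}\widetilde{I}_0=\widetilde{J}$, and note that $\eta_i\in\hilb_i(I_0)$. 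Proposition \ref{braidexchange} applied with $(\widetilde{I}_0,\eta_i,\gamma)$ gives exactly
\[
\mathbb{B}_{\Theta_i\hilb_i,\Theta_j\hilb_j}L(\Theta_i\eta_i,\widetilde{J})\Theta_j\gamma = R(\Theta_i\eta_i,\widetilde{J})\Theta_j\gamma .
\]

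Tensoring the two factor identities then yields the claim. The main remaining care is bookkeeping rather than a hard step. One must check that the fusion of the tensor-product modules $(\hilb_j\otimes\Theta_j\hilb_j)\boxtimes(\hilb_i\otimes\Theta_i\hilb_i)$ is identified with $(\hilb_j\boxtimes\hilb_i)\otimes(\Theta_j\hilb_j\boxtimes\Theta_i\hilb_i)$ compatibly with the $L$ and $R$ fields, as implicitly used in the definition of $\widetilde{\mathbb{B}_{j,i}}$. One must also check that the ordering of the indices $i,j$ in the braiding of the conjugate category matches the definition of $\mathbb{B}_{\Theta_i\hilb_i,\Theta_j\hilb_j}$. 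If that ordering turns out reversed, the same symmetry argument ($\mathbb{B}^{-1}=\mathbb{B}$ with swapped indices in $\mathcal{C}$) repairs it.
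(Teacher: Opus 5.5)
Your proposal is correct and follows the paper's route: split $\widetilde{\mathbb{B}_{j,i}}=\mathbb{B}_{j,i}\otimes\mathbb{B}_{\Theta_i\hilb_i,\Theta_j\hilb_j}$ factorwise and apply Proposition \ref{braidexchange} on the $\Theta$-side, with $\mathfrak{r}\widetilde{J}$, $\eta_i$ and $\gamma$ in place of $\widetilde{I}$, $\xi$ and $\eta$, where $\beta=\Theta_j\gamma$. For the first factor the paper only cites functoriality, but $\mathbb{B}_{j,i}R(\xi_i,\widetilde{I})\alpha=L(\xi_i,\widetilde{I})\alpha$ actually requires the monodromy $\mathbb{B}_{j,i}\mathbb{B}_{i,j}$ to be trivial; your derivation of $\mathbb{B}_{j,i}=\mathbb{B}_{i,j}^{-1}$ from the trivial bicharacter in Lemma \ref{associativity and Frobenius} supplies this step, which the paper leaves implicit.
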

\begin{proof}
By functoriality of the categorical extension and Proposition \ref{braidexchange}, we have: 
    \begin{align*}
       \widetilde{\mathbb{B}_{j,i}} \circ (R(\xi_i,\widetilde{I})\otimes L(\Theta_i\eta_i,\widetilde{J}))(\alpha\otimes\beta)) &= (\mathbb{B}_{j,i}\otimes \mathbb{B}_{\Theta_i\hilb_i, \Theta_j\hilb_j})(R(\xi_i,\widetilde{I})\alpha \otimes L(\Theta_i\eta_i,\widetilde{J})\beta)   \\
       & = L(\xi_i,\widetilde{I})\otimes R(\Theta_i\eta_i,\widetilde{J}) (\alpha \otimes \beta).
    \end{align*}
\end{proof}

Combining the last two lemmas, we have the following: 
\begin{proposition}\label{locality2}
    For $\widetilde{I}, \widetilde{J} \in \widetilde{\mathcal{I}}$,   
   we have: 
    \[
    \B^{\mathfrak{L}}(\widetilde{I}\times\widetilde{J}) = \B^{\mathfrak{R}}(\widetilde{I}\times\widetilde{J}).
    \]
    Denote this common von Neumann algebra by $\B(\widetilde{I}\times\widetilde{J}).$
\end{proposition}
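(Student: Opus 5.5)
We will show that the generating spaces coincide, $S^{\mathfrak{L}}(\widetilde{I}\times\widetilde{J}) = S^{\mathfrak{R}}(\widetilde{I}\times\widetilde{J})$, in direct parallel with the proof of Theorem \ref{localextension}. The generating sets differ in the fields used in the second tensor factor: the right-moving part of $S^{\mathfrak{L}}$ is $\Theta L(\eta_i,\mathfrak{r}\widetilde{J})\Theta^{-1}$, while that of $S^{\mathfrak{R}}$ is $\Theta R(\eta_i,\mathfrak{r}\widetilde{J})\Theta^{-1}$.

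\textbf{Step 1: rewrite the conjugated fields.} By the PCT theorem (Proposition in Section 6, \cite[Theorem 6.4]{gui2021bisognano}), on vectors of the form $\Theta_j\zeta$ with $\zeta\in\hilb_j$ we have
\[
\Theta L(\eta_i,\mathfrak{r}\widetilde{J})\Theta^{-1}\Theta_j\zeta = R(\Theta_i\eta_i,\widetilde{J})\Theta_j\zeta,
\]
and similarly $\Theta R(\eta_i,\mathfrak{r}\widetilde{J})\Theta^{-1} = L(\Theta_i\eta_i,\widetilde{J})$, where $\mathfrak{r}\mathfrak{r}\widetilde{J}=\widetilde{J}$. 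Therefore the generator
\[
\mu_i^{\mathfrak{R}}\sum_k \bigl(R(\xi_i^{(i_k)},\widetilde{I})\otimes\Theta R(\eta_i^{(i_k)},\mathfrak{r}\widetilde{J})\Theta^{-1}\bigr)
\]
equals $\mu_i^{\mathfrak{R}}\sum_k \bigl(R(\xi_i^{(i_k)},\widetilde{I})\otimes L(\Theta_i\eta_i^{(i_k)},\widetilde{J})\bigr)$ on each summand $\hilb_j\otimes\Theta_j\hilb_j$. Likewise, the corresponding $\mathfrak{L}$-generator equals $\mu_i^{\mathfrak{L}}\sum_k\bigl(L(\xi_i^{(i_k)},\widetilde{I})\otimes R(\Theta_i\eta_i^{(i_k)},\widetilde{J})\bigr)$.

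\textbf{Step 2: sectorwise computation.} Fix a vector $\zeta\in\hilb_\bullet$ with finite support and write $\zeta=\sum_{j}\widetilde{T_j}^*\widetilde{T_j}\zeta$. For each $j$, apply the $\mathfrak{R}$-generator and proceed in three moves:
\begin{enumerate}[(i)]
\item Use functoriality to move $\widetilde{T_j}^*$ through the fields, which turns $\mu_i^{\mathfrak{R}}$ into $\mu_i^{\mathfrak{R}}(\widetilde{T_j}^*\boxtimes\operatorname{id})$.
\item Apply Lemma \ref{commuLR} to replace this by $\mu_i^{\mathfrak{L}}(\operatorname{id}\boxtimes\widetilde{T_j}^*)\widetilde{\mathbb{B}_{j,i}}$.
\item Apply Lemma \ref{BR LR}, extended by linearity to the finite sums over $k$ and to general vectors of $\hilb_j\otimes\Theta_j\hilb_j$ by density and boundedness, to convert $\widetilde{\mathbb{B}_{j,i}}\bigl(R(\xi_i,\widetilde{I})\otimes L(\Theta_i\eta_i,\widetilde{J})\bigr)$ into $L(\xi_i,\widetilde{I})\otimes R(\Theta_i\eta_i,\widetilde{J})$.
\end{enumerate}
Undoing functoriality and summing over $j$ then gives exactly the $\mathfrak{L}$-generator applied to $\zeta$, with the same data $\xi_i^{(i_k)}$, $\eta_i^{(i_k)}$.

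\textbf{Step 3: conclude.} Since both operators are bounded (by the boundedness lemma for $\mu_i^{\mathfrak{L}}$ and $\mu_i^{\mathfrak{R}}$ on $\hilb_\bullet$) and finite-support vectors are dense in $\hilb_\bullet$, the generators agree. Summing over $i$ and taking spans gives $S^{\mathfrak{L}}=S^{\mathfrak{R}}$, so the generated von Neumann algebras coincide.

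\textbf{Main obstacle.} The delicate point is Step 1 together with the linearity extension in Step 2(iii). Lemma \ref{BR LR} is stated only on elementary tensors $\alpha\otimes\beta$, and the fields $L\otimes R$ act on the Hilbert tensor product. We must therefore check that the PCT identity holds on all of $\Theta_j\hilb_j$, including the conventions for $\Theta$ versus $\Theta^{-1}$ and the arg-valued interval $\mathfrak{r}\widetilde{J}$ versus $\widetilde{J}$. We must also confirm that the operator $\Theta\,\cdot\,\Theta^{-1}$ is exactly the field appearing in Proposition \ref{braidexchange}. Once this bookkeeping is settled, the remaining argument is a formal repetition of the proof of Theorem \ref{localextension}.
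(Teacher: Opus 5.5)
Your proposal is correct and follows the paper's proof essentially line by line. You fix a finite-support vector, insert $\sum_j \widetilde{T_j}^*\widetilde{T_j}$, move $\widetilde{T_j}^*$ past the fields by functoriality, and then use Lemma \ref{commuLR} and Lemma \ref{BR LR} to turn each $\mathfrak{R}$-generator into the $\mathfrak{L}$-generator with the same data, which gives $S^{\mathfrak{L}}(\widetilde{I}\times\widetilde{J}) = S^{\mathfrak{R}}(\widetilde{I}\times\widetilde{J})$. Your Step 1 (rewriting $\Theta R(\eta_i,\mathfrak{r}\widetilde{J})\Theta^{-1}$ as $L(\Theta_i\eta_i,\widetilde{J})$ via the PCT theorem and $\mathfrak{r}\mathfrak{r}\widetilde{J}=\widetilde{J}$) and your density/boundedness extension are left implicit in the paper, whose displayed computation writes the second-factor fields loosely; spelling them out clarifies the argument without changing the route.
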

\begin{proof}
Fix $\widetilde{I}, \widetilde{J} \in \widetilde{\mathcal{I}} $. 
Take $\zeta \in \bigoplus^{\operatorname{alg}}_{j\in\Z}\hilb_j\otimes\Theta_j\hilb_j$ so that $\operatorname{supp}\zeta$ is  a finite subset of $\Z$. 
Let  $N(i) \in \Z_{\geq 1}$ for any $i\in\Z$ and choose $\xi_i^{(i_k)} \in \hilb_i(I) $, 
$\eta_i^{(i_k)} \in \hilb_i(J)$, where $k= 1, \cdots, N(i) $. 
 Then,  by Lemma \ref{commuLR} and Lemma \ref{BR LR}, for any fixed $i\in\Z$,  we have: 
\begin{align*}
      \mu_i^{\mathfrak{R}}  \sum_{k=1}^{N(i)} R(\xi_i^{(i_k)},\widetilde{I})\otimes L(\Theta\eta_i^{(i_k)},\mathfrak{r}\widetilde{J}) \sum_{j\in\operatorname{supp}\zeta}\widetilde{T_j}^*\widetilde{T_j}\zeta
    &=  \sum_{j\in\operatorname{supp}\zeta}\mu_i^{\mathfrak{R}}  (\widetilde{T_j}^*\boxtimes\operatorname{id}_{\hilb_i\otimes\Theta_i\hilb_i})\sum_{k=1}^{N(i)} R(\xi_i^{(i_k)},\widetilde{I})\otimes L(\Theta\eta_i^{(i_k)},\mathfrak{r}\widetilde{J})\widetilde{T_j } \zeta \\
    & =   \sum_{j\in\operatorname{supp}\zeta} \mu_i^{\mathfrak{L}}(\operatorname{id}_{\hilb_i\otimes \Theta_i\hilb_i}\boxtimes\widetilde{T_j}^*)\widetilde{\mathbb{B}_{j,i}} \sum_{k=1}^{N(i)} R(\xi_i^{(i_k)},\widetilde{I})\otimes L(\Theta\eta_i^{(i_k)},\mathfrak{r}\widetilde{J})\widetilde{T_j } \zeta \\
    &= \sum_{j\in\operatorname{supp}\zeta} \mu_i^{\mathfrak{L}}(\operatorname{id}_{\hilb_i\otimes \Theta_i\hilb_i}\boxtimes\widetilde{T_j}^*)\sum_{k=1}^{N(i)} L(\xi_i^{(i_k)},\widetilde{I})\otimes R(\Theta\eta_i^{(i_k)},\mathfrak{r}\widetilde{J})\widetilde{T_j } \zeta \\
    & = \mu_i^{\mathfrak{L}}  \sum_{k=1}^{N(i)} L(\xi_i^{(i_k)},\widetilde{I})\otimes R(\Theta\eta_i^{(i_k)},\mathfrak{r}\widetilde{J}) \sum_{j\in\operatorname{supp}\zeta}\widetilde{T_j}^*\widetilde{T_j} \zeta \\
    & = \mu_i^{\mathfrak{L}}  \sum_{k=1}^{N(i)} L(\xi_i^{(i_k)},\widetilde{I})\otimes R(\Theta\eta_i^{(i_k)},\mathfrak{r}\widetilde{J}) \zeta,
\end{align*}    
implying $S^\mathfrak{L}(\widetilde{I}\times\widetilde{J}) = S^\mathfrak{R}(\widetilde{I}\times\widetilde{J})$ and 
$\B^{\mathfrak{L}}(\widetilde{I}\times\widetilde{J}) = \B^{\mathfrak{R}}(\widetilde{I}\times\widetilde{J})$. 
\end{proof}

\begin{theorem}\label{discreteLRconstruction}
    The family of  von Neumann algebras $\left\{ \B(\widetilde{I} \times \widetilde{J} )\right\}_{\widetilde{I},\ \widetilde{J} \in \widetilde{\mathcal{I}}}$  satisfies the following properties:  
    \begin{enumerate}
        \item (Extension) For any $\widetilde{I}, \widetilde{J} \in \widetilde{\mathcal{I}}$, 
        we have  $ \pi_{\hilb_\bullet, I \times J}(\A(I)\otimes\A^\mathfrak{r}(J)) \subset  \B(\widetilde{I}\times\widetilde{J})  $. 
        \item (Isotony) If $\widetilde{I_1} \subset \widetilde{I_2}$ and $\widetilde{J_1} \subset \widetilde{J_2}$, then $\B(\widetilde{I_1} \times \widetilde{J_1}) \subset \B(\widetilde{I_2} \times \widetilde{J_2})$.
        \item (M\"{o}bius covariance) There is a strongly continuous unitary representation $U$
of 
        $\widetilde{PSU}(1,1) \times \widetilde{PSU}(1,1) $ on $\hilb_\bullet$ such that for any $(g_1,g_2)\in \widetilde{PSU}(1,1)\times \widetilde{PSU}(1,1)$ and $\widetilde{I}, \widetilde{J}\in\widetilde{\mathcal{I}}$, 
        \[
        U(g_1, g_2) \B(\widetilde{I}\times \widetilde{J} ) U(g_1, g_2) ^* = \B(g_1\widetilde{I}\times g_2\widetilde{J} ).
        \]
        \item (Positive energy) The generator of the one-parameter rotation subgroup is positive.
        \item (Locality)  Let $\widetilde{I_1}, \widetilde{J_1}, \widetilde{I_2}, \widetilde{J_2}\in \widetilde{\mathcal{I}}$. If $\widetilde{I_1}$ is anticlockwise to  $\widetilde{I_2}$ and $\widetilde{J_1} $ is clockwise to $ \widetilde{J_2}$ , then  
    \[
   [ \B(\widetilde{I_1}\times\widetilde{J_1}), \B(\widetilde{I_2}\times\widetilde{J_2})    ] =\{0\}.
    \]
        \item (Vacuum) $\widetilde{T_0}^*(\Omega\otimes\Omega)$ is a $\widetilde{PSU}(1,1) \times \widetilde{PSU}(1,1)$ invariant vector and is cyclic for $\B(\widetilde{I}\times \widetilde{J} )$, for any  $\widetilde{I} , \widetilde{J} \in \widetilde{\mathcal{I}}.$
    \end{enumerate}
    In addition, for any $\widetilde{I}, \widetilde{J} \in\widetilde{\mathcal{I}}$, the von Neumann algebra $\B(\widetilde{I}\times\widetilde{J})$ depends only on $I, J$ and not on the choices of the arg-functions.  The strongly continuous unitary representation $U$ of $\widetilde{PSU}(1,1) \times \widetilde{PSU}(1,1) $ on $\hilb_\bullet$ factors through $PSU(1,1) \times PSU(1,1)$. 
\end{theorem}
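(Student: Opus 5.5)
The plan is to assemble the theorem from the propositions already proved in this section. The step that needs a new idea is independence of the arg-functions; the rest is bookkeeping. I take the properties in the order stated, working throughout with the common algebra $\B(\widetilde{I}\times\widetilde{J}) = \B^{\mathfrak{L}}(\widetilde{I}\times\widetilde{J}) = \B^{\mathfrak{R}}(\widetilde{I}\times\widetilde{J})$ from Proposition \ref{locality2}.

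\textbf{Extension, isotony and locality.} Extension is Proposition \ref{extension}. For isotony, the isotony axiom of the categorical extension gives $L(\xi,\widetilde{I_1}) = L(\xi,\widetilde{I_2})$ whenever $\xi\in\hilb_i(I_1)$. Since $\mathfrak{r}$ preserves inclusions, the same holds for $\Theta L(\eta,\mathfrak{r}\widetilde{J_1})\Theta^{-1}$. Together with $\hilb_i(I_1)\subset\hilb_i(I_2)$, this gives $S^{\mathfrak{L}}(\widetilde{I_1}\times\widetilde{J_1})\subset S^{\mathfrak{L}}(\widetilde{I_2}\times\widetilde{J_2})$, hence the inclusion of the generated von Neumann algebras. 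For locality, Proposition \ref{locality1} gives $[\B^{\mathfrak{L}}(\widetilde{I_1}\times\widetilde{J_1}),\B^{\mathfrak{R}}(\widetilde{I_2}\times\widetilde{J_2})]=0$ under the stated geometric condition, and Proposition \ref{locality2} replaces $\B^{\mathfrak{R}}$ by $\B^{\mathfrak{L}}$.

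\textbf{Covariance, positive energy and vacuum.} I define $U(g_1,g_2)$ on $\hilb_\bullet$ as $\sum_n \widetilde{T_n}^*\bigl(U_n(g_1)\otimes \Theta_n U_n(\mathfrak{r}g_2\mathfrak{r})\Theta_n^{-1}\bigr)\widetilde{T_n}$. By the PCT theorem, the second factor is the representation implementing $U^{\mathfrak{r}}$ on $\Theta_n\hilb_n$. I then need three facts.
\begin{enumerate}
\item The sum converges strongly to a unitary representation. This holds because the summands act on orthogonal pieces.
\item The maps $\mu_i^{\mathfrak{L}}$ intertwine $U$. The intertwiners $V^{i+j}_{i,j}$ intertwine the Möbius actions because they are $\A$-module maps between Möbius covariant modules with at most one-dimensional Hom-spaces. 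Their conjugates intertwine by the PCT theorem.
\item The charged fields transform correctly, namely $gL(\xi,\widetilde{I})g^{-1} = L(g\xi,g\widetilde{I})$. On the $\Theta$-side this follows from $\Theta g\Theta = \mathfrak{r}g\mathfrak{r}$.
\end{enumerate}
Together these give $U(g_1,g_2)S^{\mathfrak{L}}(\widetilde{I}\times\widetilde{J})U(g_1,g_2)^* = S^{\mathfrak{L}}(g_1\widetilde{I}\times g_2\widetilde{J})$, using $g\,\hilb_i(I)=\hilb_i(gI)$. Positive energy holds because each $U_n$ has positive energy; the reflected representation $\mathfrak{r}U\mathfrak{r}$ sends rotations to inverse rotations, but the antiunitary conjugation restores positivity of the generator. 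The vector $\widetilde{T_0}^*(\Omega\otimes\Omega)$ is invariant because $\Omega$ is. It is cyclic by Proposition \ref{cyclicvacuum}, since vectors of the stated form with finite support are dense in $\hilb_\bullet$: $\hilb_i(I)$ is dense, and $\Theta_i\hilb_i(\mathfrak{r}J)=\hilb_{\overline{i}}(J)$ is dense in $\Theta_i\hilb_i$.

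\textbf{Independence of arg-functions and factoring through $PSU(1,1)\times PSU(1,1)$ (main obstacle).} First I show that the $2\pi$-rotation in each factor acts trivially on every $\hilb_n$. As in Theorem \ref{simplecurrentextension}, this uses that $\mathbb{B}_{1,1}=\operatorname{id}$ and $d_1=1$ force the statistics phase, and hence by the spin-statistics theorem \cite{guido1996conformal} the univalence, of each $\hilb_n$ to be trivial. The same then holds on $\Theta_n\hilb_n$, and $U$ factors through $PSU(1,1)\times PSU(1,1)$. Next, two arg-valued intervals over the same $I$ differ by $\widetilde{I}_2 = \varrho(2\pi k)\widetilde{I}_1$, where $\varrho(2\pi k)$ is the lift of the rotation by $2\pi k$. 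Covariance then gives $\B(\widetilde{I}_2\times\widetilde{J}) = U(\varrho(2\pi k),1)\B(\widetilde{I}_1\times\widetilde{J})U(\varrho(2\pi k),1)^* = \B(\widetilde{I}_1\times\widetilde{J})$, since this unitary is trivial. The second variable is handled the same way.

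The delicate point is checking that the $\widetilde{PSU}(1,1)$-covariance of the categorical extension, $gL(\xi,\widetilde{I})g^{-1}=L(g\xi,g\widetilde{I})$ with the arg-shift included, is compatible with this rotation argument on the $\Theta$-conjugated side. It may also be necessary to verify that the spin-statistics input applies to each $\hilb_n$ via the trivial braiding $V^{n+k}_{n,k}\mathbb{B}_{k,n}=V^{n+k}_{k,n}$ from Lemma \ref{associativity and Frobenius}.
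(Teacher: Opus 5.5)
Your proposal is correct and follows essentially the paper's route. Extension comes from Proposition \ref{extension}, locality from Propositions \ref{locality1} and \ref{locality2}, and cyclicity from Proposition \ref{cyclicvacuum}; covariance, independence of the arg-functions and factoring through $PSU(1,1)\times PSU(1,1)$ come from the same argument as in Theorem \ref{simplecurrentextension}, namely covariance of the charged fields and intertwiners together with the spin–statistics triviality of the $2\pi$-rotation. Your extra details (the isotony check, and deducing $\mathbb{B}_{n,n}=\operatorname{id}$ from Lemma \ref{associativity and Frobenius} and unitarity of $V^{2n}_{n,n}$ so that spin–statistics applies to every $\hilb_n$) fill in steps the paper leaves implicit.
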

\begin{proof}
Extension follows from Proposition \ref{extension}. 
    Positive energy follows from the positive-energy property of each component representation. The cyclicity of $\widetilde{T_0}^*(\Omega\otimes\Omega)$ follows from Proposition \ref{cyclicvacuum}. Locality follows from Proposition \ref{locality1} and Proposition \ref{locality2}.
    The remaining assertions follow by arguments similar to those used in the proof of Theorem \ref{simplecurrentextension}.
\end{proof}

\printbibliography[
heading=bibintoc,
title={References}
] 

\end{document}